\newcommand{\R}{\mathbb{R}}
\newcommand{\C}{\mathbb{C}}
\newcommand{\N}{\mathbb{N}}
\newcommand{\Z}{\mathbb{Z}}
\newcommand{\E}{\mathbb{E}}
\newcommand{\diam}{\operatorname{diam}}
\newcommand{\CZO}{\textup{CZO}}
\newcommand{\CZ}{\textup{CZ}}
\numberwithin{equation}{section}
\newcommand{\ud}[0]{\,\mathrm{d}}
\newcommand{\esssup}[0]{\operatornamewithlimits{ess\,sup}}
\newcommand{\dist}[0]{\operatorname{dist}}
\newcommand{\abs}[1]{|#1|}
\newcommand{\babs}[1]{\big|#1\big|}
\newcommand{\Babs}[1]{\Big|#1\Big|}
\newcommand{\Norm}[2]{\|#1\|_{#2}}
\newcommand{\bNorm}[2]{\big\|#1\big\|_{#2}}
\newcommand{\BNorm}[2]{\Big\|#1\Big\|_{#2}}
\newcommand{\ave}[1]{\langle #1\rangle}
\newcommand{\bave}[1]{\big\langle #1\big\rangle}
\newcommand{\osc}[0]{\operatorname{osc}}
\newcommand{\BMO}[0]{\operatorname{BMO}}
\newcommand{\bmo}[0]{\operatorname{bmo}}
\newcommand{\supp}[0]{\operatorname{spt}}
\newcommand{\loc}[0]{\operatorname{loc}}
\newcommand{\eps}[0]{\varepsilon}
\newcommand{\ch}[0]{\operatorname{ch}}
\newcommand{\calD}[0]{\mathcal{D}}
\newcommand{\wt}[1]{{\widetilde{#1}}}
\theoremstyle{plain}
\newtheorem{thm}[equation]{Theorem}
\newtheorem{lem}[equation]{Lemma}
\newtheorem{prop}[equation]{Proposition}
\theoremstyle{definition}
\newtheorem{defn}[equation]{Definition}
\theoremstyle{remark}
\newtheorem{rem}[equation]{Remark}
\title{Off-diagonal estimates for bi-parameter commutators}
\author{Tuomas Oikari}
\address[T.O.]{Department of Mathematics and Statistics, University of Helsinki, P.O.B. 68, FI-00014 University of Helsinki, Finland}
\email{tuomas.v.oikari@helsinki.fi}
\subjclass[2010]{42B20}
\keywords{Calder\'on--Zygmund operators, singular integrals, multi-parameter analysis, commutators}
\thanks{T. Oikari was supported by the Academy of Finland project No. 306901, by the Finnish Centre of Excellence in Analysis and Dynamics Research project No. 307333, and by the three-year research grant of the University of Helsinki No. 75160010.
}
\begin{document}
	
 \begin{abstract}
 	We study the boundedness of commutators of bi-parameter singular integrals between mixed spaces
 	$$
 	[b,T]: L^{p_1}L^{p_2} \to L^{q_1}L^{q_2}
 	$$
 	in the off-diagonal situation $q_i,p_i\in(1,\infty)$ where we also allow $q_i\not= p_i.$
 	Boundedness is fully characterized for several arrangements of the integrability exponents with some open problems presented.
 \end{abstract}
 
\maketitle
\section{Introduction and preliminaries}
The first commutator results concern the commutator of the Hilbert transform
\[
[b,H]f = bHf-H(bf)
\] whose boundedness 
was first characterized in the classical theorem of Nehari in \cite{Nehari1957} through Hankel operators. Later,
 Coifman, Rochberg and Weiss \cite{CRW} generalized Nehari's result and showed that
\begin{equation}\label{eq:commutatorbb}
\|b\|_{\BMO} \lesssim \sum_{i=j}^d\|[b,R_j]\|_{L^p(\R^d) \to L^p(\R^d)} \lesssim \|b\|_{\BMO} :=\sup_I \fint_I  \abs{b-\ave{b}_I}, \qquad p \in (1,\infty),
\end{equation}
where the supremum is taken over all cubes $I \subset \R^d$ and $\ave{b}_I = \frac{1}{|I|} \int_I b$. The upper bound in \eqref{eq:commutatorbb} was proved for a wide class of bounded singular integrals, while the lower bound especially involves the Riesz transforms. Later, the lower bound in \eqref{eq:commutatorbb} was improved separately by both Janson \cite{Janson1978} and Uchiyama \cite{Uchiyama1978} by bringing in certain non-degeneracy and assumptions on the kernel of $T$, especially, their results cover the lower bound \eqref{eq:commutatorbb} with any single Riesz transform (in contrast to \eqref{eq:commutatorbb} involving all the $d$ Riesz transforms).  Janson \cite{Janson1978} also covers the off-diagonal situation when $1<p < q<\infty$ and provides the characterization
\allowdisplaybreaks
\begin{equation*}
	\Norm{[b,T]}{L^p\to L^q}\sim \sup_Q\ell(Q)^{-\alpha}\fint_Q\abs{b-\ave{b}_Q}, \qquad \alpha :=d\Big(\frac{1}{p}-\frac{1}{q}\Big).
\end{equation*}
The remaining range with $1<q<p<\infty$ was characterised recently by Hyt\"onen \cite{HyLpLq},
$$
\|[b,T]\|_{L^p \to L^q} \sim  \inf_{c \in \C} \Norm{b-c}{L^r}, \qquad \frac{1}{q}  =\frac{1}{r}+\frac{1}{p}.
$$

The $p=q$ characterization yields factorizations of $H^1$, see \cite{CRW}, and implies div-curl lemmas relevant for compensated compactness, see \cite{CLMS1993}. The sub-diagonal case $q>p$ also implies factorization results, this time for $H^s,$ where $s<1$ now depends on $p,q,$ see for example \cite{kuffner2018weak}.  In Lindberg \cite{Lindberg2017} and Hyt\"{o}nen \cite{HyLpLq} the characterization of the case $q<p$ is connected with a conjecture of Iwaniec \cite{Iwa1997} on the prescribed Jacobian problem. It is crucial in all of these applications that we have both commutator upper and lower bounds.

In this paper, we work in the product ambient space  $\R^d = \R^{d_1}\times \R^{d_2}$ and study the boundedness of the bi-parameter commutators $[b,T],$ where $T$ is now a bi-parameter singular integral operator. Due to the product space nature of the problem, it is natural to allow different integrability exponents in the first and the second parameter, thereby, leading to the question of $L^{p_1}L^{p_2}$-to-$L^{q_1}L^{q_2}$ boundedness. In accordance with the three qualitatively different regimes $p<q$, $p=q$ and $p>q$ in the one-parameter setup, there will now be nine cases depending on the relative size of both of the pairs $p_1,q_1$  and $p_2,q_2.$ 
The exact statements of our results are spread throughout the text; the following Theorem \ref{thm:main} is a condensed version of the obtained results.
\begin{thm}\label{thm:main}
	Let $T$ be a non-degenerate bi-parameter Calder\'on-Zygmund operator on $\R^d = \R^{d_1}\times \R^{d_2},$
	fix the exponents $p_1, p_2, q_1, q_2 \in (1,\infty)$ and set	
	\allowdisplaybreaks \begin{align*}
	\begin{split}
	\alpha_i :=d_i\Big(\frac{1}{p_i}-\frac{1}{q_i}\Big),\quad \text{if}\quad p_i<q_i;\qquad
	\frac{1}{r_i} :=\frac{1}{q_i}-\frac{1}{p_i},\quad \text{if}\quad p_i>q_i.
	\end{split}
	\end{align*}
Let also $b:\R^d\to\C$ be a function with some local integrability depending on $p_1,p_2,q_1,q_2$ ($L^{\infty}_{\loc}$ works in all cases, for example). 
	Then, denoting  $\|[b,T]\|_{L^{p_1}_{x_1}L^{p_2}_{x_2}\to L^{q_1}_{x_1}L^{q_2}_{x_2}} = N_{p,q}$ we have the upper- and lower bounds
	\begin{center}
		\begin{tabular}{ c | c | c | c }
			& $p_1<q_1$ & $p_1=q_1$ & $p_1>q_1$ \\ \hline
			& &  & \\
			
			$p_2<q_2$ &  $\mbox{b = constant,}$	&  $b(\cdot,x_2) = \mbox{constant},$	
			& $\mbox{b = constant},$ \\
			& $N_{p,q} = 0$ & $N_{p,q}\sim \Norm{b(x_1,\cdot)}{\dot C^{0,\alpha_2}_{x_2}}$ & $N_{p,q}=0$ \\
			\hline
			& &  & \\
			&$b(x_1,\cdot) = \mbox{constant},$ & $N_{p,q}\sim \|b\|_{\bmo(\R^{d_1}\times \R^{d_2})}$  &   $\inf_{c\in\C}\Norm{b-c}{L^{\infty}_{x_2} L^{r_1}_{x_1}}\lesssim N_{p,q}$ \\
			$p_2=q_2$ &  $N_{p,q}\sim \Norm{b(\cdot,x_2)}{\dot C^{0,\alpha_1}_{x_1}}$	&& $ \lesssim \inf_{c\in\C}\Norm{b-c}{ L^{r_1}_{x_1}L^{\infty}_{x_2}}$ \\
			\hline
			& &  & \\
			$p_2>q_2$ & $\mbox{b = constant}$	& $N_{p,q}\sim \inf_{c\in\C}\Norm{b-c}{L^{\infty}_{x_1}L^{r_2}_{x_2}}$ & $N_{p,q}  \lesssim \inf_{c\in\C}\| b-c\|_{ L^{r_1}_{x_1} L^{r_2}_{x_2}} $	\\
			& $N_{p,q} = 0$ & & \\
		\end{tabular}
	\end{center}
\end{thm}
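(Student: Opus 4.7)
The plan is to handle all nine cells of the table by a common two-step scheme: a bi-parameter paraproduct decomposition yields the upper bounds, and slice-by-slice one-parameter off-diagonal arguments yield the lower bounds and the rigidity in the degenerate cells.

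\emph{Upper bounds.} Apply the bi-parameter dyadic representation theorem to write $T$ as an average of bi-parameter dyadic shifts, and expand $[b,T]$ into a finite collection of bi-parameter paraproducts: a full paraproduct, two partial paraproducts (one in each parameter), and several mixed ones. Each such paraproduct factors by Fubini into one-parameter pieces, and its $L^{p_1}L^{p_2}\to L^{q_1}L^{q_2}$ operator norm is bounded by iterated application of the one-parameter paraproduct $L^{p_i}\to L^{q_i}$ estimate: the constant involves $\|b\|_{\BMO}$ or $\|b\|_{\bmo}$ in the $p_i=q_i$ slot, the semi-norm $\|b\|_{\dot C^{0,\alpha_i}}$ in the $p_i<q_i$ slot (Janson), and $\|b-c\|_{L^{r_i}}$ in the $p_i>q_i$ slot (Hyt\"onen). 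Summing over the finitely many paraproduct types gives the upper bounds of the table; in the mixed-norm entries the precise order of the iterated $L^\infty$/$L^{r_i}$ norms is dictated by the order in which Fubini is applied.

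\emph{Lower bounds.} In the diagonal cell $p_1=q_1,\ p_2=q_2$ we test against tensor products of bumps aligned with a non-degenerate direction of the kernel of $T$ and recover $\|b\|_{\bmo}$ in the spirit of Ferguson--Sadosky / Ferguson--Lacey. In a mixed cell such as $(p_1>q_1,\ p_2=q_2)$, we freeze $x_2$ and apply Hyt\"onen's one-parameter weak-factorization lower bound in the $x_1$-variable uniformly over slices; integrating in $x_2$ produces $\inf_c\|b-c\|_{L^\infty_{x_2}L^{r_1}_{x_1}}\lesssim N_{p,q}$, and the asymmetry with the matching upper bound $\lesssim\inf_c\|b-c\|_{L^{r_1}_{x_1}L^\infty_{x_2}}$ is precisely the order in which Fubini is applied above. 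In a cell with $p_i<q_i$ and $p_j=q_j$, Janson's one-parameter Hölder lower bound is applied slice-by-slice in the $i$-th variable, yielding the $\dot C^{0,\alpha_i}_{x_i}$ seminorm of $b(\cdot,x_j)$ as a function of $x_j$; since the commutator is finite, the $j$-slice must be essentially constant, producing the stated degeneracy.

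\emph{Rigidity and main obstacle.} In the four cells where $p_i<q_i$ holds in one parameter and $p_j\neq q_j$ in the other, the slice-wise Janson estimate can be rescaled on rectangles whose side length in the non-Hölder variable grows to infinity; the mismatch between the Hölder gain $\ell^{-\alpha_i}$ and the Lebesgue scaling in the other variable forces the semi-norm of $b$ to vanish, hence $b$ is constant and $N_{p,q}=0$. The crux of the whole argument is making the slice-wise application of the one-parameter off-diagonal lower bounds quantitative and uniform across slices, so that the integration in the remaining variable is justified; the inability to reverse the order of the iterated norms in the slice-wise weak-factorization step is exactly what leaves a gap in $(p_1>q_1,\ p_2=q_2)$ and prevents a matching lower bound in the fully off-diagonal cell $(p_1>q_1,\ p_2>q_2)$.
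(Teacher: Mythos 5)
Your proposal correctly identifies the dyadic representation theorem as the engine for (some of) the upper bounds and correctly intuits that the rigidity in the degenerate cells comes from a scaling mismatch, but the core of the lower-bound strategy is not what the paper does and, as stated, does not work.

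The main gap is the slice-by-slice lower bound. You propose to ``freeze $x_2$ and apply Hyt\"onen's one-parameter weak-factorization lower bound in the $x_1$-variable uniformly over slices,'' and likewise to apply Janson slice-wise in the H\"older cases. A bi-parameter singular integral does not act slice-by-slice: the kernel $K(x,y)$ couples $(x_1,y_1)$ with $(x_2,y_2)$, so there is no one-parameter operator $T_{x_2}$ in the $x_1$-variable whose commutator norm is controlled by the mixed-norm bound $\|[b,T]\|_{L^{p_1}L^{p_2}\to L^{q_1}L^{q_2}}$ for a.e.\ $x_2$. (The paper does use a slice-type representation in Lemma \ref{lem:pvrep}, but only in the upper bound and only after it is already known that $b(x_1,\cdot)$ is constant, which trivializes the commutator in that variable. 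For the lower bound you cannot assume this structure.) What the paper actually does is a genuinely bi-parameter approximate weak factorization: the non-degeneracy (Definition \ref{defn:kernel:1}) provides a shifted rectangle $\widetilde R=\widetilde I\times\widetilde J$ simultaneously in both parameters (Proposition \ref{bootstrap}), and the two-step iteration in Proposition \ref{wf:2par} produces the factorization $f = h_1 T g_1 - g_1 T^*h_1 + h_2 T^* g_2 - g_2 T h_2 + \tilde f$. This yields the rectangular oscillation bound $\osc(b;R)\lesssim \mathcal{O}_{p,q}(b;K)|I|^{1/p_1-1/q_1}|J|^{1/p_2-1/q_2}$ of Proposition \ref{prop:oscupperbound}, and everything else (the constant-force by sending a side length to $0$ or $\infty$, the $\dot C^{0,\alpha_i}$ bound, the $\bmo$ bound) follows from it. For the $p_i>q_i$ cases an extra layer (the stopping-time/sparse decomposition in Lemma \ref{lem:prstop} and the $\mathcal{O}^{\Sigma}_{p,q}$ off-support norm) is required, which does not appear in your scheme.

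A secondary issue is the claim that the bi-parameter paraproducts ``factor by Fubini into one-parameter pieces.'' This is true only for the cancellative shifts; the partial paraproducts have $\BMO$ coefficients in one parameter slot coupled to Haar expansions in the other, and the full paraproducts have genuinely product-$\BMO$ coefficients, and neither factors into a tensor product of one-parameter operators. This is why the paper's upper-bound proof for the $p_1=q_1,\ p_2<q_2$ case cannot simply iterate one-parameter paraproduct estimates, and instead relies on bi-parameter square functions, the strong maximal function, the $H^1$--$\BMO$ duality in one parameter, and the fractional Fefferman--Stein inequality (Lemmas \ref{lem:frac:FS}, \ref{lem:bound:sf}, \ref{lem:FSbp}, \ref{lem:H1-BMO}). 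Finally, the paper handles the symmetric case $p_1<q_1,\ p_2=q_2$ by an entirely different and simpler route (Journ\'e's kernel formulation, Lemma \ref{lem:pvrep}, and boundedness of a fractional integral), which your proposal does not anticipate and which suggests the Fubini-iteration picture is also not how that case closes.
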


Our main focus is on the off-diagonal cases $(p_1, p_2) \ne (q_1, q_2)$ with the diagonal being well-known and lately studied e.g. by Holmes, Petermichl and Wick \cite{HPW2018}, and by Li, Martikainen and Vuorinen \cite{LMV2019biparcom}.

While some of the upper bounds in the off-diagonal situation in the table of Theorem \ref{thm:main} are quick by few applications of Hölder's inequality, or trivial in the constant cases, the rest are not completely effortless and require e.g. the use of representation theorem and other purely bi-parameter tools, however, the most work is found with the lower bounds. We prove the lower bounds through the approximate weak factorization argument but now in the bi-parameter setting. In the two cases where we fail to achieve a full characterization, the problems are mainly due to the fact that the awf argument is symmetric with respect to both of the parameters, while the norm $\Norm{\cdot}{L^{t}_{x_1}L^{s}_{x_2}}$ has a built-in order to it. This limitation is not new and was expected, as we already saw it in Airta, Hyt\"{o}nen, Li, Martikainen and Oikari \cite{AHLMO}, where we provided a similar table as in Theorem \ref{thm:main} above, but for the bi-parameter commutator $[T_2,[b,T_1]],$ where each $T_i$ is a singular integral on $\R^{d_i}.$ In \cite{AHLMO} we achieved a fully satisfactory characterization of the boundedness of the commutator in only four cases, this is in line with $[T_2,[b,T_1]]$ being considered a harder operator to work with than $[b,T].$ Perhaps this difference is best reflected through the fact that the diagonal characterization in terms of the proposed product BMO is open in the first case, see e.g. the discussion in \cite{AHLMO}, whereas the boundedness of $[b,T]$ on the diagonal is fully understood and captured by the simpler little bmo.

\subsection*{Acknowledgements}
We thank Henri Martikainen, Emil Vuorinen and Tuomas Hyt\"{o}nen for their comments that improved the paper.

\vspace{0.2cm}
In the remaining part of this section we provide the definition of singular integrals and commutators. The reader who is familiar with this material may immediately skip the to next Section \ref{sect:awf}.
\subsection{Singular integrals}
We denote the diagonal with $\Delta = \Delta^{(d_i,d_i)} =  \{(x_i,y_i) \in \R^{d_i} \times \R^{d_i} \colon x_i =y_i \}$ and call $$K_i \colon \R^{d_i} \times \R^{d_i} \setminus \Delta \to \C$$ a standard Calder\'on-Zygmund kernel on $\R^{d_i}$
if the size estimate
\begin{align*}
	|K_i(x_i,y_i)| \le \frac{C}{|x_i-y_i|^{d_i}},
\end{align*}
and, for some $\delta > 0$, the regularity estimates
\begin{equation*}\label{eq:holcon}
|K_i(x_i, y_i) - K_i(x_i', y_i)| + |K_i(y_i, x_i) - K_i(y_i, x_i')| \le C\frac{|x_i-x_i'|^{\delta}}{|x_i-y_i|^{d_i+\delta}}
\end{equation*}
whenever $|x_i-x_i'| \le |x_i-y_i|/2$, are satisfied. 
The best constant in these estimates is denoted by $\|K\|_{\textup{CZ}(d_i,\delta)}$ and the collection of all such kernels is denoted as $\CZ(d_i,\delta).$

\begin{defn} Let 
	$\Sigma_i =\Sigma(\R^{d_i})$ be the linear span of the indicator functions of cubes. A singular integral operator (SIO) is then a linear mapping $T_i:\Sigma_i\to L^1_{\loc}(\R^{d_i})$ such that 
	\[
	\langle T_if,g\rangle = \int_{\R^{d_i}}\int_{\R^{d_i}}K(x,y)f(y)g(x)\ud y \ud x,\qquad\supp(f)\cap\supp(g) = \emptyset,\qquad f,g\in\Sigma_i, 
	\]
	where $K\in\CZ(d_i,\delta).$
\end{defn}

\begin{defn} A Calder\'on-Zygmund operator (CZO) is simply an SIO $T_i$ that is bounded
	from $L^p(\R^{d_i}) \to L^p(\R^{d_i})$ for all (equivalently, for some) $p \in (1,\infty)$. Given a CZO $T_i$ with a kernel $K_i\in \CZ(d_i,\delta),$ let us denote
	$\Norm{T}{\CZO(d_i,\delta)} = \Norm{T}{L^2(\R^{d_i})\to L^2(\R^{d_i})} + \Norm{K_i}{\CZ(d_i,\delta)}.$
	
\end{defn}

\subsection{Bi-parameter singular integrals}\label{section:biparSIOs} We give the definition of Martikainen \cite{Ma1} of bi-parameter SIOs, see also the last Section \ref{sect:last} for the original definition by Journée. Now we start working in the ambient space $\R^d = \R^{d_1}\times \R^{d_2}.$ Again, we let 
$\Sigma_i =\Sigma(\R^{d_i})$ be the linear span of the indicator functions of the cubes of $\R^{d_i}$ and then let $\Sigma = \Sigma(\R^{d})$ be the linear span of  $\Sigma_1\otimes \Sigma_2 = \{f_1\otimes f_2 : f_i\in\Sigma_i\}.$ We assume that we are given a linear operator $T$ along with a full adjoint $T^*$ and partial adjoints $T^{*}_1,T^{*}_2,$ i.e., four operators $T,T^*,T^{*}_1,T^{*}_2:\Sigma\to L^1_{\loc}(\R^d)$
that satisfy
\allowdisplaybreaks \begin{align*}
	\langle T(f_1\otimes f_2),  g_1\otimes g_2  \rangle& = 	\langle T^*_1(g_1\otimes f_2),  g_1\otimes g_2  \rangle \\ 
	&= 	\langle T^*_2(f_1\otimes g_2),  g_1\otimes f_2  \rangle =	\langle T^*(g_1\otimes g_2),  f_1\otimes f_2  \rangle.
\end{align*}
These operators will be assumed to have bi-parameter kernels, recalled next.
\subsubsection{Bi-parameter kernels}
Let $\delta>0.$ We assume to have a kernel
\[
K:\R^d\times\R^d \setminus\Delta\to \C,
\]
where $\Delta= \{ (x,y)\in(\R^{d_1}\times\R^{d_2})^2 : x_1= y_1\mbox{ or } x_2 = y_2\},$
that satisfies the size estimate
\allowdisplaybreaks \begin{align}\label{ker:size}
\abs{K(x,y)} \leq C\abs{x_1-y_1}^{-d_1}\abs{x_2-y_2}^{-d_2},
\end{align}
the regularity estimate
\allowdisplaybreaks \begin{align*}
\abs{K(x,y)-& K((x_1,x_2'),y) - K((x_1',x_2),y) + K(x',y)} \\
&\leq C \frac{\abs{x_1-x_1'}^{\delta}}{\abs{x_1-y_1}^{d_1+\delta}}\frac{\abs{x_2-x_2'}^{\delta}}{\abs{x_2-y_2}^{d_2+\delta}},
\end{align*}
whenever $\abs{x_i-x_i'}\leq \frac{1}{2}\abs{x_i-y_i}$ for $i=1,2,$
and the mixed size-regularity estimate
\allowdisplaybreaks \begin{align*}
\abs{K((x_1,x_2),y) - K((x_1',x_2),y)} \leq C \frac{\abs{x_1-x_1'}^{\delta}}{\abs{x_1-y_1}^{d_1+\delta}} \abs{x_2-y_2}^{-d_2},
\end{align*}
whenever $\abs{x_1-x_1'}\leq \frac{1}{2}\abs{x_1-y_1}.$ We also assume the symmetric estimates to the stated regularity and size-regularity estimates to hold in the other parameter slots. The collection of all such kernels is denoted $\CZ((d_1,d_2),\delta)$ and the best constant $C$ in these estimates is denoted with $\Norm{K}{\CZ((d_1,d_2),\delta)}.$ 

\subsubsection{Full kernel representation} Let $f = f_1\otimes f_2,g = g_1\otimes g_2 \in \Sigma$ be such  that for both indices $i\in\{1,2\}$ we have $\supp(f_i)\cap\supp(g_i)= \emptyset.$ Then we assume the representation
\allowdisplaybreaks \begin{align*}
	\langle Tf,g\rangle = \int_{\R^{d_1}\times\R^{d_2}}\int_{\R^{d_1}\times\R^{d_2}} K(x,y) (f_1\otimes f_2)(y)(g_1\otimes g_2)(x)\ud y\ud x,
\end{align*}
where  $K\in\CZ((d_1,d_2),\delta).$
Note that this implies the analogous kernel representations for $T^{1*},T^{2*},T^*.$

\subsubsection{Partial kernel representations} Now, let $f = f_1\otimes f_2,g = g_1\otimes g_2 \in \Sigma$ be such  that for one index $j\in\{1,2\}$ we have $\supp(f_j)\cap\supp(g_j)= \emptyset.$ Then, we assume the representation 
\allowdisplaybreaks \begin{align*}
	\langle T(f_1\otimes f_2),g_1\otimes g_2\rangle = \int_{\R^{d_j}}\int_{\R^{d_j}} K_{f_i,g_i}(x_j,y_j)f_j(y_j)g_j(x_j)\ud y_j\ud x_j,
\end{align*}
where $K_{f_i,g_i} \in \CZ(\delta,d_j)$ is such that $\Norm{K_{f_i,g_i}}{\CZ(\delta,d_j)} \leq C(f_i,g_i)$ for some positive constant that depends on the functions $f_i,g_i.$ We also assume these constants to have the following bounds 
\allowdisplaybreaks \begin{align*}
	C(1_P,1_P)+C(1_P,a_P) + C(a_P,1_P) \leq C\abs{P}
\end{align*}
for all functions $a_P\in\Sigma_i$ such that  $a_P = 1_Pa_P,$ $\abs{a_P} \leq 1,$ and $\int a_P = 0,$ where $P$ is a cube on $\R^{d_i}.$

\begin{defn} A linear operator $T$ with the full and partial kernel representations  as described in this section, is called a bi-parameter singular integral operator.
\end{defn}

\begin{defn}\label{defn:CZO:mart} A bi-parameter singular integral operator $T$ such that $\Norm{T}{L^p(\R^d)\to L^p(\R^d)} + \Norm{T^{1*}}{L^p(\R^d)\to L^p(\R^d)}< \infty$ for some $p\in(1,\infty)$ (equivalently, for all $p$) is called a bi-parameter Calderón-Zygmund operator.
\end{defn}

%\begin{rem} A bi-parameter SIO is a bi-parameter CZO if and only if the bi-paramater $T1$ conditions are satisfied, see Journé \cite{Jo} and Martikainen \cite{Ma1}.
%\end{rem}

\subsection{Basic notation}
When we consider a bi-parameter product space $\R^d = \R^{d_1} \times \R^{d_2}$ we often denote the mixed-norm space $L^{p_1}(\R^{d_1}; L^{p_2}(\R^{d_2}))$
by $L^{p_1}_{x_1} L^{p_2}_{x_2}.$
We identify $f \colon \R^d \to \C$ satisfying
$$
\Big(\int_{\R^{d_1}} \Big( \int_{\R^{d_2}} |f(x_1, x_2)|^{p_2} \ud x_2 \Big)^{p_1/p_2} \ud x_1\Big)^{1/p_1} < \infty
$$
with the function $\phi_f \in L^{p_1}(\R^{d_1}; L^{p_2}(\R^{d_2}))$, $\phi_f(x_1) = f(x_1, \cdot)$. 

We write all identities almost everywhere. For example, if a function can be made to satisfy a property (e.g. to be a constant, or continuous, etc...) by redefining it in a set of measure zero, we say that the function satisfies that property.

We denote cubes in $\R^{d_1}$ by $I$, and cubes in $\R^{d_2}$ by $J$ --  that is, the dimension of the cube can be read from which symbol we are using. Various rectangles then take the form $I\times J.$ The side-length and the diameter of a cube $I$ are denoted respectfully by $\ell(I)$ and $\diam(I)$. Centre-points of cubes and rectangles are denote as $c_Q, c_R.$

Often integral pairings need to be taken with respect to one of the variables only.
For example, if $f \colon \R^{d_1}\times\R^{d_2} \to \C$ and $h_I \colon \R^{d_1} \to \C$, then $\langle f, h_I \rangle \colon \R^{d_2} \to \C$ is defined by
$$
\langle f, h_I \rangle(x_2) = \int_{\R^{d_1}} f(y_1, x_2)h(y_1)\ud y_1.
$$
On several occasions we use operators that only act on one of the variables, e.g. the maximal function $\mathsf{M}:L^p_{x_2}\to L^p_{x_2}$ and we denote it acting on a function of two parameters as $\mathsf{M}f(x_1,x_2) = \mathsf{M}(f(x_1,\cdot))(x_2)$.
If unclear on what parameter slots these auxiliary operators are acting, we denote $\mathsf{M}^i,\mathsf{M}^{\alpha_i},$ etc.
%
%We denote averages by
%$$
%\langle f \rangle_A = \fint_A f:= \frac{1}{|A|} \int_A f,
%$$
%where $|A|$ denotes the Lebesgue measure of the set $A$. The indicator function of a set $A$ is denoted by $1_A$. 

Throughout the exponents $p_1,p_2,q_1,q_2$ will  always be in the range $(1,\infty)$ but this will not always be mentioned. We will sometimes write $p=(p_1,p_2)$ and $q =(q_1,q_2)$ to shorten notation and this will be clear from the context.

We denote $A \lesssim B$, if $A \leq C B$ for some constant $C>0$ depending only on the dimension of the underlying space, on the integrability exponents and on other unimportant absolute constants appearing in the assumptions.
Then $A \sim B$, if $A \lesssim B$ and $B \lesssim  A.$ Subscripts on constants ($C_{a,b,c,...}$) and quantifiers ($\lesssim_{a,b,c,...}$) signify their dependence on those subscripts.

\section{Approximate weak factorization in the bi-parameter setting}\label{sect:awf}
We will next go through the awf argument for proving commutator lower bounds in the bi-parameter setting. We refer the reader to consult \cite{HyLpLq} for a lengthier discussion in the standard one-parameter setting. Still, let us recall some important points.
%, however, we shortly present the main idea also here in the diagonal case.
%Given a function $f$ chosen so that the first identity of the following line is satisfied, and given that we could factor it according to the latter identities,
%\begin{align*}
%	\ave{\abs{b-\ave{b}_R}}_R = \langle b,f\rangle = \langle b, hTg-gT^*h \rangle = \langle [b,T]g,h\rangle,
%\end{align*}
%where the functions $g,h$ are somehow related to $f$ and $T$, then, we could hope to have estimates for $\Norm{b}{\BMO}$ in terms of the commutator $[b,T].$
%However, this is not the kind of expansion we get, and instead, we have an approximate weak-factorization where an additional error term $\tilde{f}$ remains over. However, this error term can be shown to be small and hence it can be handled via an absorbtion argument. 

When a commutator lower bound is proved, the full norm $\|[b,T]]\|_{L^{p_1}_{x_1} L^{p_2}_{x_2} \to L^{q_1}_{x_1} L^{q_2}_{x_2}}$ is not actually needed but so-called off-support versions of the norm we denote as $
\mathcal{O}_{p,q}(b;K)$ and  $\mathcal{O}_{p,q}^{\Sigma}(b;K)$
are used and these can be defined even if we only have $b \in L^1_{\loc}.$ Indeed, in defining these off-support norms what we use is the assumption
\allowdisplaybreaks \begin{align*}
Tf(x)=\int_{\R^d}K(x,y)f(y)\ud y, \quad x\not\in \supp(f),
\end{align*}
and this only involves the kernel. It is actually true in all cases that we are estimating the size of the off-support norms via testing conditions on $b$ more than just simply the size of the full norm. Consequently, where we achieve a full characterization we also obtain as immediate corollaries the information
\allowdisplaybreaks \begin{align}\label{aaa}
\mathcal{O}_{(p_1,p_2),(q_1,q_2)}(b;K) \sim  	\|[b,T]\|_{L^{p_1}_{x_1} L^{p_2}_{x_2} \to L^{q_1}_{x_1} L^{q_2}_{x_2}}.
\end{align}
Here we understand that the left-hand side of \eqref{aaa} is defined for $b\in L^1_{\loc}$ and the kernel $K,$ while when we write the right-hand side, we assume implicitly that the commutator $[b,T]$ is well-defined and bounded.

At the heart of the business lies the notion of non-degeneracy.
\begin{defn}\label{defn:kernel:1} A bi-parameter kernel $K$ is called non-degenerate, if for each $x = (x_1,x_2) \in\R^d$ and two radii $r_1,r_2>0,$ there exists $y = (y_1,y_2)$ such that
	\[
	\abs{K(x,y)} \gtrsim r_1^{-d_1}r_2^{-d_2},\qquad \abs{x_1-y_1}>r_1,\qquad \abs{x_2-y_2}>r_2.
	\]
\end{defn}

To obtain commutator lower bounds, we will also assume that the kernel $K$ satisfies the size estimate \eqref{ker:size} 
and the mixed size-regularity conditions
\allowdisplaybreaks \begin{align}\label{kernel:regularity}
\abs{K((x_1,x_2),y)-K((x_1',x_2),y)} \leq C \frac{1}{\abs{x_1-y_1}^{d_1}}\omega_1\bigg(\frac{\abs{x_1-x_1'}}{\abs{x_1-y_1}}\bigg)\frac{1}{\abs{x_2-y_2}^{d_2}},
\end{align}
whenever $\abs{x_1-x_1'}\leq 1/2\abs{x_1-y_1},$ of which we also have the three other variants.

Notice that given the points $x,y$ as in Definition \ref{defn:kernel:1}, it follows from the size estimate that
\begin{align}\label{a}
	r_1^{-d_1} r_2^{-d_2} \lesssim |K(x,y)| \lesssim |x_1-y_1|^{-d_1} |x_2-y_2|^{-d_2} \lesssim |x_1-y_1|^{-d_1} r_2^{-d_2},
\end{align}
hence $|x_1-y_1| \lesssim r_1,$ and similarly we see that $|x_2-y_2| \lesssim r_2,$ and consequently that
\begin{align}\label{aa}
	\abs{x_i-y_i}\sim r_i,\qquad i=1,2.
\end{align}

Of the functions $\omega_i$ appearing the mixed- and full regularity estimates we ask that they are increasing, subadditive and satisfy $\omega_i(\alpha) \to 0$ as $\alpha \to 0.$ We will use a single function $\omega$ to deal with all the parameter slots, as we have $
\omega_i \leq \max_{i\in \{1,2,3,4\}}\omega_i  =:\omega, $ 
and $\omega$ is a function that satisfies the same assumptions as each single $\omega_i.$ 

Obviously the class of standard bi-parameter CZ-kernels is encompassed here, however, it is a larger class in another sense also: we do not require any kind of full regularity conditions, see section \ref{section:biparSIOs}. 

%This is due to the fact that Proposition \ref{bootstrap} to which the rest of our argument bootstraps, does not require the regular. We only use the mixed size-regularity bounds.

\begin{prop}\label{bootstrap} Let $K$ be a non-degenerate bi-parameter kernel as in Definition \ref{defn:kernel:1} that satisfies the size estimate \eqref{ker:size} and the mixed size-regularity estimates \eqref{kernel:regularity}. 
	
	Fix a constant $A\geq 3$ and let $R= I\times J$ be a rectangle. Then, there exists a rectangle $\widetilde{R} = \wt{I}\times\wt{J}$ of the same dimensions as $R,$ i.e. $\ell(\wt{I}) = \ell(I)$ and $\ell(\wt{J}) = \ell(J),$ localized as
	\begin{align}\label{asdfgh}
		 \dist(I,\widetilde{I}) \sim A\diam(I) \qquad 	\dist(J,\widetilde{J}) \sim A\diam(J)
	\end{align}
	and which satisfies the following: for all $x\in R$ and $y\in\widetilde{R}$ we have that
\begin{align}\label{kkk}
		\abs{K\big(x,y\big) - K\big( c_R,c_{\wt{R}}  \big)} \lesssim  A^{-(d_1+d_2)}\abs{R}^{-1}\omega\big(1/A \big),
\end{align}
and if we choose $A$ large enough, we also have, 
\allowdisplaybreaks \begin{align*}
\Babs{\int_RK(x,y)\ud x}\sim  \Babs{\int_{\wt{R}}K(x,y)\ud y}   \sim \int_R\abs{K(x,y)} \ud x \sim  \int_{\wt{R}}\abs{K(x,y)}\ud y \sim A^{-(d_1+d_2)}.
\end{align*}
\end{prop}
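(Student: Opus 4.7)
The plan is to construct $\widetilde R$ directly from the non-degeneracy of $K$, then prove the kernel oscillation bound \eqref{kkk} by a telescoping that updates one coordinate at a time so as to be compatible with the mixed size-regularity hypothesis, and finally to read off the integral asymptotics as a simple consequence of comparing $K(x,y)$ to $K(c_R,c_{\widetilde R})$.

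To construct $\widetilde R$, I would apply Definition \ref{defn:kernel:1} at the point $x=c_R$ with radii $r_1\sim A\ell(I)$ and $r_2\sim A\ell(J)$, producing $y^{\ast}=(y_1^{\ast},y_2^{\ast})$ with $|K(c_R,y^{\ast})|\gtrsim r_1^{-d_1}r_2^{-d_2}\sim A^{-(d_1+d_2)}|R|^{-1}$, and, by \eqref{aa}, with $|c_{R,i}-y_i^{\ast}|\sim r_i$. Taking $\widetilde I$ (resp.\ $\widetilde J$) to be the cube of sidelength $\ell(I)$ (resp.\ $\ell(J)$) centered at $y_1^{\ast}$ (resp.\ $y_2^{\ast}$), the rectangle $\widetilde R=\widetilde I\times\widetilde J$ has center $c_{\widetilde R}=y^{\ast}$ and satisfies the localization \eqref{asdfgh} by the triangle inequality. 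Combining the non-degeneracy lower bound with the size estimate \eqref{ker:size} applied at $(c_R,c_{\widetilde R})$, I then get $|K(c_R,c_{\widetilde R})|\sim A^{-(d_1+d_2)}|R|^{-1}$, which is the scale we want to propagate.

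For \eqref{kkk}, fix $x\in R$ and $y\in\widetilde R$ and telescope
\[
K(x,y)-K(c_R,c_{\widetilde R}) \;=\; \sum_{i=1}^{4}\Delta_{i},
\]
where each $\Delta_i$ replaces exactly one of the four coordinates $x_1,x_2,y_1,y_2$ by its corresponding center. In every such single-coordinate difference the relevant outside separation is $|x_i-y_i|\sim A\ell_i$ while the inside displacement is at most $\ell_i/2$, so for $A$ large enough one has $|x_i-x_i'|\le|x_i-y_i|/2$ and the mixed size-regularity estimate \eqref{kernel:regularity} (together with its three symmetric analogues) applies with $\omega$-argument of order $1/A$; the two passive slots contribute size factors of order $(A\ell_j)^{-d_j}$. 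Summing yields $|K(x,y)-K(c_R,c_{\widetilde R})|\lesssim A^{-(d_1+d_2)}|R|^{-1}\omega(1/A)$, proving \eqref{kkk}.

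For the integral comparabilities, choose $A$ so large that $\omega(1/A)$ is smaller than a fixed fraction of the implicit constants in $|K(c_R,c_{\widetilde R})|\sim A^{-(d_1+d_2)}|R|^{-1}$. Then \eqref{kkk} forces $|K(x,y)|\sim A^{-(d_1+d_2)}|R|^{-1}$ pointwise on $R\times\widetilde R$, and moreover $K(x,y)$ stays within a small complex neighborhood of $K(c_R,c_{\widetilde R})$, so that no cancellation can arise upon integration. Integrating in $x$ over $R$ and in $y$ over $\widetilde R$ then gives all four quantities the common order $|R|\cdot A^{-(d_1+d_2)}|R|^{-1}=A^{-(d_1+d_2)}$. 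The only delicate step is the telescoping: since we do \emph{not} assume full four-variable regularity, the chain has to walk through exactly one coordinate at a time, with the active slot contributing $\omega(1/A)$ and the three passive slots absorbing size penalties $(A\ell)^{-d}$ — precisely what the mixed size-regularity estimate delivers.
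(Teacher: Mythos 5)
Your proposal is correct and follows essentially the same route as the paper: construct $\widetilde R$ via non-degeneracy and the size estimate to pin down $|K(c_R,c_{\widetilde R})|\sim A^{-(d_1+d_2)}|R|^{-1}$, prove \eqref{kkk} by the four-step, one-coordinate-at-a-time telescoping that is tailored to the mixed size-regularity hypothesis, and deduce the integral asymptotics by adding and subtracting $K(c_R,c_{\widetilde R})$ once $A$ is large enough that $\omega(1/A)$ is dominated. No material differences from the paper's argument.
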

\begin{proof}
	Let $c_{R}=(c_{I},c_{J})\in\R^{d_1+d_2}$ be the centre of a rectangle $R.$ By the non-degeneracy of $K,$ we find a point $c_{\wt{R}}=(c_{\wt{I}},c_{\wt{J}})$ such that
	\begin{align}\label{asdf}
			\abs{c_I-c_{\wt{I}}} \geq A\ell(I),\quad 	\abs{c_J-c_{\wt{J}}} \geq A\ell(J)
	\end{align}
	 that is the centre of a rectangle $\wt{R} = \wt{I}\times\wt{J}$ and satisfies
	\allowdisplaybreaks \begin{align*}
	\abs{K(c_R,c_{\wt{R}})} \gtrsim A^{-(d_1+d_2)} \ell(I)^{-d_1}\ell(J)^{-d_2} =A^{-(d_1+d_2)} \abs{R}^{-1}.
	\end{align*}
	The claims on the line \eqref{asdfgh} follow immediately from the remarks following the definition of non-degeneracy, see the lines \eqref{a} and \eqref{aa}. Moreover, by the size estimate and \eqref{asdf} we have that $\abs{K(c_R,c_{\wt{R}})} \lesssim A^{-(d_1+d_2)} \abs{R}^{-1}$ and consequently that
	\begin{align}\label{csize}
	\abs{K(c_R,c_{\wt{R}})} \sim A^{-(d_1+d_2)} \abs{R}^{-1}.
	\end{align}
	Now let $x\in R$ and $y\in\wt{R}$ be arbitrary. To see why \eqref{kkk} holds, we use the mixed size-regularity conditions \eqref{kernel:regularity}. We have
	\allowdisplaybreaks \begin{align*}
	\begin{split}
	\abs{K\big(x,y\big) - K\big( c_R,c_{\wt{R}}  \big)} &\leq \abs{K\big( [ x_1,x_2 ]  , [y_1,y_2 ]  \big) - K\big( [ c_{I},x_2 ]  , [y_1 , y_2 ]  \big)} \\
	&+ \abs{K\big( [c_{I} ,x_2 ]  , [y_1 ,y_2 ]  \big) - K\big( [ c_{I} ,c_{J} ]  , [y_1 ,y_2 ]  \big)}\\
	&+ \abs{K\big( [c_{I} , c_{J} ]  , [ y_1 , y_2 ]  \big) - K\big( [c_{I} ,c_{J} ]  , [c_{\wt{{I}}} , y_2 ]  \big)} \\
	&+ \abs{K\big( [c_{I} ,c_{J} ]  , [c_{\wt{{I}}} , y_2 ]  \big) - K\big( [ c_{I} , c_{J} ]  , [ c_{\wt{{I}}},c_{\wt{{J}}} ]  \big)} \\
	& \lesssim A^{-(d_1+d_2)}\abs{R}^{-1}\omega\big(1/A \big),
	\end{split}
	\end{align*}
	where for example the estimate for the first of the four intermediate terms derives as 
	\allowdisplaybreaks \begin{align*}
	\abs{K( [ x_1,x_2 ]  , [y_1,y_2 ]  ) - K( [ c_{I},x_2 ]  , [y_1 , y_2 ]  )} &\lesssim  \frac{1}{\abs{c_I-y_1}^{d_1}}\omega\bigg(\frac{\abs{x_1-c_{I}}}{\abs{c_I-y_1}}\bigg)\frac{1}{\abs{x_2-y_2}^{d_2}}\\
	&\lesssim A^{-d_1}\ell({I})^{-d_1}\omega\big(C/A\big)A^{-d_2}\ell({J})^{-d_2} \\ 
	&\lesssim A^{-(d_1+d_2)}\abs{R}^{-1}\omega\big(1/A\big),
	\end{align*}
	where used the fact that $A\geq3 $ to apply the mixed size-regularity estimates and the sub-additivity of $\omega.$
	
	Now, the last four claims involving the integrals follow by choosing $A$ sufficiently large, by subtracting and adding $K(c_R,c_{\wt{R}}),$ and using the estimates \eqref{kkk} and \eqref{csize}.
\end{proof}

\begin{prop}\label{wf:2par}   Let $K$ be a non-degenarate bi-parameter kernel as in Proposition \ref{bootstrap}. Let $R=I\times J$ be a fixed rectangle and let $f$ be a locally integrable function such that $\supp(f)\subset R,$ $\int f = 0.$
	
	Then, for a choice of the constant $A$ large enough, the function $f$ can be written as
	\allowdisplaybreaks 
	\begin{align}\label{fact}
		f =  [h_1Tg_1 -g_1 T^*h_1] + [ h_2T^*g_2- g_2 Th_2] +  \tilde{f},
	\end{align}
	where the appearing auxiliary functions satisfy 
	\begin{align}\label{local1}
	 g_1 = 1_{\wt{R}},\qquad g_2 = 1_R,\qquad 	\supp(h_1)\subset R,\qquad \supp(h_2)\subset \wt{R},\qquad \supp(\tilde{f})\subset R
	\end{align}
	and 
	\begin{align}\label{local2}
			\abs{h_1(x)} \lesssim A^{d}\abs{f(x)},\qquad \abs{h_2(x)} \lesssim A^d\ave{\abs{f}}_R1_{\widetilde{R}}(x),\qquad 
		\abs{\tilde{f}(x)} \lesssim \omega(\frac{1}{A})\ave{\abs{f}}_R1_{R}(x),
	\end{align}
	and we have $\int\tilde{f}=0.$  
\end{prop}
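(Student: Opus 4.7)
The plan is to construct the auxiliary quintuple $(g_1,g_2,h_1,h_2,\tilde{f})$ explicitly around the twin rectangle $\wt{R}$ produced by Proposition~\ref{bootstrap}, which packages exactly the size and stability information the argument will consume. First I would apply Proposition~\ref{bootstrap} to $R$ with $A$ large enough to secure \eqref{asdfgh}--\eqref{csize} together with the integral estimates listed there, which in particular yield $|Tg_1(x)|\sim A^{-(d_1+d_2)}$ for $x\in R$ and $|T^*(1_R)(y)|\sim A^{-(d_1+d_2)}$ for $y\in\wt{R}$. Writing $\lambda := K(c_R,c_{\wt{R}})|\wt{R}|$, a complex number of modulus $\sim A^{-(d_1+d_2)}$ by \eqref{csize}, I would then set
$$
g_1 := 1_{\wt{R}},\qquad g_2 := 1_R,\qquad h_1 := \frac{f}{Tg_1}\,1_R,\qquad h_2 := \frac{T^* h_1}{T^*(1_R)}\,1_{\wt{R}}.
$$
By construction $h_1\,Tg_1 = f$ pointwise on $R$, $h_2\,T^*(1_R) = T^* h_1$ pointwise on $\wt{R}$, and $|h_1(x)|\lesssim A^{d_1+d_2}|f(x)|$ is immediate. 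Finally, define $\tilde{f}$ via \eqref{fact}, i.e.\ $\tilde{f} := f - (h_1 Tg_1 - g_1 T^* h_1) - (h_2 T^* g_2 - g_2 T h_2)$, so the factorization is tautological.

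The main quantitative step is the pointwise bound $|T^* h_1(y)|\lesssim \omega(1/A)\ave{|f|}_R$ for $y\in\wt{R}$. A naive size estimate only yields $\ave{|f|}_R$; the extra $\omega(1/A)$ is extracted using $\int f = 0$ to rewrite
$$
T^* h_1(y) = \int_R\Bigl[\frac{K(x,y)}{Tg_1(x)} - \frac{K(c_R,y)}{\lambda}\Bigr] f(x)\,\ud x,
$$
and then decomposing the bracket as
$$
\frac{K(x,y)\,(\lambda - Tg_1(x))}{\lambda\,Tg_1(x)} + \frac{K(x,y) - K(c_R,y)}{\lambda}.
$$
Each of the two pieces is pointwise $\lesssim |R|^{-1}\omega(1/A)$ thanks to (i) $|K(x,y)|\lesssim A^{-(d_1+d_2)}|R|^{-1}$ from the size estimate, (ii) $|\lambda - Tg_1(x)|\lesssim A^{-(d_1+d_2)}\omega(1/A)$ obtained by integrating \eqref{kkk} over $\wt{R}$, and (iii) $|K(x,y) - K(c_R,y)|\lesssim A^{-(d_1+d_2)}|R|^{-1}\omega(1/A)$, which follows by telescoping $(x_1,x_2)\to(c_I,x_2)\to(c_I,c_J)$ and applying the mixed size-regularity \eqref{kernel:regularity} (and its symmetric counterpart) in each slot, exactly as inside the proof of Proposition~\ref{bootstrap}. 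Dividing by $|T^*(1_R)|\sim A^{-(d_1+d_2)}$ then gives $|h_2(y)|\lesssim A^{d_1+d_2}\omega(1/A)\ave{|f|}_R$, which is strictly stronger than the bound claimed for $h_2$.

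What remains is to verify the support, size and mean zero of $\tilde{f}$. On $\wt{R}$ the defining identity for $h_2$ forces $\tilde{f}|_{\wt{R}}\equiv 0$, so $\supp\tilde{f}\subset R$; on $R$ the identity $h_1 Tg_1 = f$ collapses the residual to $\tilde{f}|_R = Th_2|_R$. For $x\in R$, the size estimate on $K$ combined with the bound on $|h_2|$ yields
$$
|Th_2(x)| \le \int_{\wt{R}}|K(x,y)|\,|h_2(y)|\,\ud y \lesssim A^{-(d_1+d_2)}|R|^{-1}\cdot|\wt{R}|\cdot A^{d_1+d_2}\omega(1/A)\ave{|f|}_R = \omega(1/A)\ave{|f|}_R,
$$
which is the claimed $\tilde{f}$-bound in \eqref{local2}. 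Finally $\int\tilde{f} = \int f = 0$, because each of the two brackets $h_1 Tg_1 - g_1 T^* h_1$ and $h_2 T^* g_2 - g_2 T h_2$ integrates to zero by the bilinear duality $\pair{h_1}{Tg_1} = \pair{g_1}{T^* h_1}$ and $\pair{h_2}{T^* g_2} = \pair{g_2}{T h_2}$.

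The hard part will be the cancellation step for $T^* h_1$: the kernel regularity available here is only the mixed size-regularity \eqref{kernel:regularity} (not full regularity), so the difference $K(x,y) - K(c_R,y)$ must be split into two one-coordinate-at-a-time pieces and each controlled separately. The algebraic decomposition above is designed precisely so that this mixed-regularity estimate, together with the stability $|Tg_1(x) - \lambda|\lesssim A^{-(d_1+d_2)}\omega(1/A)$, passes cleanly through the nonlinear denominator $1/Tg_1$. Once that estimate is in place the remainder is essentially bookkeeping against the outputs of Proposition~\ref{bootstrap}.
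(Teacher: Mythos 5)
Your proposal is correct and follows essentially the same route as the paper: the same two-step factorization through $\wt{R}$, the same construction of $h_1=f/Tg_1$ and $h_2=T^*h_1/T^*g_2$, and the same key cancellation estimate on $T^*h_1$ obtained by subtracting $K(c_R,y)/\lambda$, using the mean-zero of $f$, and telescoping one coordinate at a time through the mixed size-regularity. The only cosmetic deviation is that you estimate $\tilde f\rest_R = Th_2\rest_R$ by the size estimate alone, which gives one factor of $\omega(1/A)$ (sufficient for \eqref{local2}), whereas the paper reruns the full cancellation argument in the second iteration and observes the stronger $\omega(1/A)^2$ bound before discarding the extra factor.
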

\begin{proof}
 Let $\wt{R}=\wt{I}\times\wt{J}$ be the rectangle as obtained by Proposition \ref{bootstrap} and let $g_1 := 1_{\wt R}$. We decompose the function $f$ as
	\allowdisplaybreaks \begin{align*}
	f= h_1Tg_1 -g_1 T^*h_1 + \tilde{f},\qquad h_1= \frac{f}{Tg_1},\quad \tilde{w} = g_1 T^*h_1.
	\end{align*}
	The only problem with the above factorization is that $h_1$ might a priori involve a division by zero, the following estimates show that this is not the case.  Let $x\in R,$ then 
	\allowdisplaybreaks \begin{align*}
	Tg_1(x)  &=\int_{\wt{R}} K(x,y) \ud y = \int_{\wt{R}}(K(x,y)-K(c_R,c_{\wt{R}}))\ud y + \int_{\wt{R}} K(c_R,c_{\wt{R}})\ud y \\
	&= I + II.
	\end{align*}
  	It follows by Proposition \ref{bootstrap} that
	\allowdisplaybreaks \begin{align*}
	\abs{ I } \lesssim A^{-(d_1+d_2)}\omega(1/A),\qquad
	\abs{II} \sim A^{-(d_1+d_2)}
	\end{align*}
	and hence for $A$ sufficiently large that $\abs{Tg_1(x)} \sim A^{-(d_1+d_2)},$ making $h_1$ well-defined. Also, by the above we have
	\allowdisplaybreaks \begin{align*}
	\abs{h_1(x)} \lesssim A^{d_1+d_2}\abs{f(x)},
	\end{align*}
	which establishes the left-most estimate on the line \eqref{local2}.
	Then, to estimate the first iteration error term $\tilde{w},$ let $y\in R$ and write
	\allowdisplaybreaks \begin{align*}
	\begin{split}
	\frac{f}{Tg_1}(y) &= \Big(\frac{f}{Tg_1} - \frac{f}{\int_{\wt{R}} K(c_R,c_{\wt{R}})\ud z} \Big)(y) + \frac{f(y)}{\int_{\wt{R}} K(c_R,c_{\wt{R}})\ud z}= III + IV.
	\end{split}
	\end{align*}
	By Proposition \ref{bootstrap} it follows that
	\allowdisplaybreaks \begin{align*}
	\begin{split}
	\abs{III} &= \Babs{ f(y)\int_{\wt{R}} ( K(y,z)-K(c_R,c_{\wt{R}}))\ud z }\times\Babs{\int_{\wt{R}}K(y,z)\ud z\int_{\wt{R}} K(c_R,c_{\wt{R}})\ud z}^{-1}  \\ 
	&\lesssim \frac{\abs{f(y)}A^{-d}\omega(\frac{1}{A})}{A^{-d}A^{-d}} 
	= A^{d}\omega\Big(\frac{1}{A}\Big)\abs{f(y)},
	\end{split}
	\end{align*}
	 and hence, we have with $x\in\wt{R}$ that 
	\allowdisplaybreaks \begin{align*}
	\begin{split}
	&\Babs{ T^*\Big(\frac{f}{Tg_1} - \frac{f}{\int_{\wt{R}} K(c_R,c_{\wt{R}})\ud y} \Big)}(x) \lesssim A^{d}\omega\Big(\frac{1}{A}\Big) \cdot\int_{R}\abs{K(y,x)}\abs{f(y)}\ud y \\
	&\lesssim A^{d}\omega\Big(\frac{1}{A}\Big) \cdot A^{-d}\ave{\abs{f}}_R1_{\widetilde{R}}(x) = \omega\Big(\frac{1}{A}\Big) \cdot \ave{\abs{f}}_R1_{\widetilde{R}}(x),
	\end{split}
	\end{align*}
	where we simply used the size estimate.
	
	By the zero mean of $f$ on $R$ we have
	\allowdisplaybreaks \begin{align*}
	\begin{split}
	&\babs{T^*(IV)(y)} = \Babs{T^*\Big(\frac{f}{\int_{\wt{R}} K(c_R,c_{\wt{R}})\ud y}\Big)(x)} = \frac{\Babs{\int_R (K(y,x)-K(c_{\wt{R}},c_R)f(y)\ud y}}{\Babs{\int_{\wt{R}} K(c_R,c_{\wt{R}})\ud y}} \\
	& \lesssim  \omega\Big(\frac{1}{A}\Big)A^{-d}\ave{\abs{f}}_R 1_{\widetilde{R}}(x) \cdot A^{d} = \omega\Big(\frac{1}{A}\Big) \cdot \ave{\abs{f}}_R1_{\widetilde{R}}(x),
	\end{split}
	\end{align*}
	where we used the mixed size-regularity estimates and Proposition \ref{bootstrap}.
	Hence, combining the above parts, we obtain
	\allowdisplaybreaks \begin{align*}
	\abs{\tilde{w}(x)} \lesssim  \omega\Big(\frac{1}{A}\Big)\ave{\abs{f}}_R1_{\widetilde{R}}(x).
	\end{align*}
	It is also immediate from the definitions that
	\begin{align}\label{zeromean}
		\int_{\wt{R}}\tilde{w} = \int g_1 T^*\big(\frac{f}{Tg_1}\big) = \int Tg_1 \frac{f}{Tg_1} =  \int f = 0.
	\end{align}
	
	Now, let $g_2 = 1_R.$ By repeating the above argument, but now starting with the function $\tilde{f}$ supported on the rectangle $\wt{R}$ we write
	\allowdisplaybreaks \begin{align*}
	\tilde{w} = h_2T^*g_2- g_2 Th_2 +\tilde{\tilde{f}},\qquad h_2 = \frac{\tilde{w}}{Tg_2},\quad  \tilde{f} =  g_2Th_2.
	\end{align*}
	 With the same arguments and proofs as above, the function $h_2$ is well-defined and for $x\in \widetilde{R}$ we have that
	$$\abs{h_2(x)}\lesssim A^{d}\abs{\tilde{f}(x)} \lesssim  A^{d}\omega\Big(\frac{1}{A}\Big)\ave{\abs{f}}_R1_{\widetilde{R}}(x) \lesssim A^{d} \ave{\abs{f}}_R1_{\widetilde{R}}(x)$$
	and for $x\in R,$ with $A$ large enough, that
	\[
	\abs{\tilde{f}(x)} \lesssim \omega\Big(\frac{1}{A}\Big)\cdot \ave{\abs{\tilde{f}}}_{\widetilde{R}}1_R(x) \lesssim \omega\Big(\frac{1}{A}\Big)^2\ave{\abs{f}}_R1_R(x)\lesssim \omega\Big(\frac{1}{A}\Big)\ave{\abs{f}}_R1_R(x).
	\]
	Moreover, as in \eqref{zeromean}, the secont iteration error term $\wt{f}$ inherits the zero mean from $\wt{w}.$
\end{proof} 

	Let us notate the oscillation of a function $b \in L^1_{\loc}$ over a rectangle $R=I\times J$ with
	\allowdisplaybreaks \begin{align*}
	\osc(b;R) =  \fint_R\abs{b-\ave{b}_R}.
	\end{align*}

\begin{prop}\label{prop:oscform}	Let $K$ be a symmetrically non-degenerate bi-parameter kernel and $b\in L^1_{\loc}.$
	Then, for all rectangles $R=I\times J$ we have
\begin{align*}
	\abs{R}\osc(b;R) \lesssim \abs{\langle [b,T]g_1,h_1\rangle} + \abs{\langle [b,T]h_2,g_2\rangle},
\end{align*}
where the appearing functions are as in \ref{wf:2par},
\[
g_1 = 1_{\wt{R}},\quad g_2 = 1_{R},\qquad 	\abs{h_1(x)} \lesssim_A 1_R(x),\quad \abs{h_2(x)} \lesssim_A 1_{\widetilde{R}}(x).
\] 
\end{prop}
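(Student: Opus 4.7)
The plan is to dualize $|R|\osc(b;R)$ against a bounded mean-zero test function supported in $R$, apply the approximate weak factorization from Proposition \ref{wf:2par}, and absorb a small error term.

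First, to linearize the oscillation I take $\psi := \sign(b-\ave{b}_R)1_R$ and set $\phi := \psi - \ave{\psi}_R 1_R$. Then $\supp\phi\subset R$, $\int\phi = 0$, $|\phi|\le 2$, and using $\int\phi = 0$ a one-line computation gives
\[
\int b\,\phi \;=\; \int_R(b-\ave{b}_R)\phi \;=\; \int_R|b-\ave{b}_R| \;=\; |R|\osc(b;R).
\]

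Next I apply Proposition \ref{wf:2par} with $f := \phi$ and a constant $A \geq 3$ to be fixed, obtaining the decomposition \eqref{fact}. Pairing each side against $b$ and using bilinearity with the adjoint identities $\int h\,T(bg) = \int bg\,T^*h$, the two bracketed terms collapse to $\bpair{[b,T]g_1}{h_1}$ and $-\bpair{[b,T]h_2}{g_2}$ respectively, so that
\[
|R|\osc(b;R) \;=\; \bpair{[b,T]g_1}{h_1} \;-\; \bpair{[b,T]h_2}{g_2} \;+\; \int b\,\tilde f.
\]
The pointwise size statements for $h_1$ and $h_2$ in the proposition are immediate from \eqref{local2} together with $|\phi|\le 2$ and $\ave{|\phi|}_R\le 2$; the prefactors $A^{d}$ are absorbed into the $A$-dependent implicit constant indicated by $\lesssim_A$.

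The remaining task is to control the error term. Since $\int\tilde f = 0$ and $\supp\tilde f\subset R$, I may replace $b$ by $b-\ave{b}_R$ inside $\int b\,\tilde f$, and then the bound $|\tilde f|\lesssim\omega(1/A)\ave{|\phi|}_R 1_R\lesssim\omega(1/A)1_R$ from \eqref{local2} yields
\[
\Babs{\int b\,\tilde f} \;\lesssim\; \omega(1/A)\cdot|R|\osc(b;R).
\]
Choosing $A$ large enough that the implicit constant times $\omega(1/A)$ is strictly less than one, the error absorbs into the left-hand side and produces the stated inequality. The main delicate point is precisely this absorption: it is crucial that $\tilde f$ has mean zero, so that the error depends on $\osc(b;R)$ rather than on the uncontrollable average of $|b|$. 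Everything else reduces to sign-tracking in \eqref{fact} and the pointwise bounds supplied by Proposition \ref{wf:2par}.
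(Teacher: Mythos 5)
Your proposal is correct and follows essentially the same route as the paper: dualize $|R|\osc(b;R)$ against a bounded mean-zero function supported in $R$, feed that function into Proposition \ref{wf:2par}, recognize the two bracketed terms as commutator pairings, and absorb the error term $\int b\tilde f$ using the zero mean and the $\omega(1/A)$ smallness of $\tilde f$. The only (immaterial) difference is that you construct the dualizing function explicitly via the sign of $b-\ave{b}_R$, whereas the paper simply asserts the existence of such an $f$ with $\int bf\sim |R|\osc(b;R)$.
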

\begin{proof} As $b\in L^1_{\loc}$, we find a function $f$ of zero mean supported on $R$ such that $\Norm{f}{\infty}\leq 1$ and
	\[
	\abs{R}\osc(b;R) \sim \int bf.
	\]
By Proposition \ref{wf:2par} we write and estimate the right-hand side as 
	\begin{align*}
		\int bf & = \int b[h_1Tg_1 -g_1 T^*h_1] + \int b [ h_2T^*g_2- g_2 Th_2] + \int b \tilde{f} \\
		&\leq \abs{\langle [b,T]g_1,h_1\rangle} + \abs{\langle [b,T]h_2,g_2\rangle} + 	\babs{\int b \tilde{f}}
	\end{align*}
and the error term further to
\begin{align*}
		\babs{\int b \tilde{f}} \leq \Norm{\tilde{f}}{\infty}\int_R\babs{b-\ave{b}_R}\lesssim \omega(1/A)\abs{R}\osc(b;R).
\end{align*}
By having the above estimates together we obtain
\begin{align}\label{absorb}
		\abs{R}\osc(b;R) \lesssim \abs{\langle [b,T]g_1,h_1\rangle} + \abs{\langle [b,T]h_2,g_2\rangle} + \omega(1/A)\abs{R}\osc(b;R).
\end{align}
As $b\in L^1_{\loc},$ by choosing $A$ large enough, we absorb the common term in \eqref{absorb} to the left-hand side.
\end{proof}

The first off-support norm we use is
\begin{defn}\label{norm:1} Let $b\in L^1_{\loc}$ and define
	\allowdisplaybreaks \begin{align*}
	\mathcal{O}_{(t_1, t_2), (s_1, s_2)}^{A}(b;K) =  \sup_{\substack{R= I\times J \\ \wt{R} = \wt{I}\times\wt{J}}} \Big|  \frac{\int_{\R^d\times \R^d}(b(x)-b(y)) K(x,y)f(y)g(x) \ud y \ud x}{\abs{I}^{1/t_1+1/s_1'}\abs{J}^{1/t_2+1/s_2'}} \Big|,
	\end{align*}
	where the supremum is taken over rectangles $R = I \times J$ and $\wt R = \wt I \times \wt J$ with
	$$
	\dist (I, \wt I)\sim A \ell(I) \qquad \textup{and} \qquad \dist (J, \wt J)\sim A \ell(J)
	$$
	and over functions $f \in L^{\infty}(R)$ and $g \in L^{\infty}(\wt R)$ with
	$$
	\|f\|_{L^{\infty}} \le 1 \qquad \textup{and} \qquad  \|g\|_{L^{\infty}} \le 1.
	$$
\end{defn}

\begin{rem}	
	When $p=(p_1,p_2),q=(q_1,q_2)$ we may write $\mathcal{O}_{p,q}^{A}(b;K)  = \mathcal{O}_{(p_1,p_2),(q_1,q_2)}^{A}(b;K).$
	
	From this point onwards we will fix the constant $A$ large enough so that we may always use the conclusions of all the above stated propositions where the constant $A$ appears and we will drop the superscript $A$ and simply write $\mathcal{O}_{p,q}(b;K).$
\end{rem}

Relating the oscillation to the off-support norm, we have the following
\begin{prop}\label{prop:oscupperbound}
	Let $K$ be a non-degenerate bi-parameter kernel, $b\in L^1_{\loc}$ and $s_i,t_i\in(1,\infty).$
	Then, for all rectangles $R=I\times J$ we have 
	\allowdisplaybreaks \begin{align*}
	\osc(b;R)  \lesssim \mathcal{O}_{(t_1,t_2),(s_1,s_2)}(b;K)\abs{I}^{1/t_1-1/s_1}\abs{J}^{1/t_2-1/s_2}.
	\end{align*}
\end{prop}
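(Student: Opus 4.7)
The proof will be a direct bridge between the oscillation identity of Proposition \ref{prop:oscform} and the off-support norm of Definition \ref{norm:1}, so the plan has three short steps.

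First, I apply Proposition \ref{prop:oscform} to the fixed rectangle $R = I\times J$, which yields
\[
\abs{R}\osc(b;R) \lesssim \abs{\langle [b,T]g_1,h_1\rangle} + \abs{\langle [b,T]h_2,g_2\rangle},
\]
where $g_1 = 1_{\wt R}$, $g_2 = 1_R$, and $\abs{h_1} \lesssim_A 1_R$, $\abs{h_2} \lesssim_A 1_{\wt R}$ for the rectangle $\wt R = \wt I\times \wt J$ from Proposition \ref{bootstrap}. Since the commutator pairings are evaluated against functions with disjoint supports (one lives in $R$, the other in $\wt R$), the kernel representation of $T$ gives the off-support formula
\[
\langle [b,T]u,v\rangle = \int_{\R^d}\int_{\R^d}(b(x)-b(y))K(x,y)u(y)v(x)\ud y\ud x.
\]

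Second, I match each of the two pairings to the supremum in Definition \ref{norm:1}. For the first pairing, the source function $g_1 = 1_{\wt R}$ lives in $\wt R$ and the test function $h_1/C_A$ lives in $R$ with $L^\infty$ norm at most one; by the symmetry of the separation condition $\dist(I,\wt I)\sim A\ell(I)$, $\dist(J,\wt J)\sim A\ell(J)$, and since Proposition \ref{bootstrap} forces $\ell(\wt I) = \ell(I)$ and $\ell(\wt J) = \ell(J)$, we can simply relabel $\wt R$ as the \emph{source} rectangle (still of side-lengths $\ell(I), \ell(J)$). This places the pairing inside the supremum defining $\mathcal{O}_{(t_1,t_2),(s_1,s_2)}(b;K)$, yielding
\[
\abs{\langle [b,T]g_1,h_1\rangle} \lesssim_A \mathcal{O}_{(t_1,t_2),(s_1,s_2)}(b;K)\abs{I}^{1/t_1+1/s_1'}\abs{J}^{1/t_2+1/s_2'}.
\]
The second pairing $\langle [b,T]h_2,g_2\rangle$ is handled identically (source $h_2/C_A$ on $\wt R$, test $g_2 = 1_R$ on $R$), producing the same bound.

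Third, I divide through by $\abs{R} = \abs{I}\abs{J}$ and simplify the exponents using $1/s_i' - 1 = -1/s_i$, so that $1/t_i + 1/s_i' - 1 = 1/t_i - 1/s_i$, which gives exactly the claimed
\[
\osc(b;R) \lesssim \mathcal{O}_{(t_1,t_2),(s_1,s_2)}(b;K)\abs{I}^{1/t_1-1/s_1}\abs{J}^{1/t_2-1/s_2}.
\]
There is no real obstacle here, since all hard work (absorbing the error term, checking the size bounds on the auxiliary functions $h_1,h_2$, and the non-degeneracy/separation geometry) has already been done in Propositions \ref{bootstrap}, \ref{wf:2par} and \ref{prop:oscform}; the only point demanding a sentence of care is the role-swap between $R$ and $\wt R$ in matching the pairings to Definition \ref{norm:1}, which is valid precisely because $\wt R$ shares its side-lengths with $R$ and the separation condition is symmetric in the two rectangles.
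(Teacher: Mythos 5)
Your proof is correct and takes essentially the same route as the paper's: invoke Proposition \ref{prop:oscform}, bound each pairing by the off-support norm of Definition \ref{norm:1}, and divide by $\abs{R}$ using $1/t_i + 1/s_i' - 1 = 1/t_i - 1/s_i$. The only difference is that you spell out the $R\leftrightarrow\wt R$ role-swap needed to fit the pairings into the supremum of Definition \ref{norm:1}, which the paper leaves implicit; this is a genuine point of care and your justification (same side-lengths plus symmetric separation condition) is exactly right.
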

\begin{proof} By Proposition \ref{prop:oscform} we write
	\[
		\abs{R}\osc(b;R)  \lesssim \abs{\langle [b,T]g_1,h_1\rangle} + \abs{\langle [b,T]h_2,g_2\rangle},
	\]
	for functions $h_i,g_i$ as in Proposition \ref{prop:oscform}. By the definition of the off-support norm we estimate
	\allowdisplaybreaks \begin{align*}
	\abs{\langle [b,T]g_1,h_1\rangle} &\leq  \mathcal{O}_{(t_1,t_2)(s_1,s_2)}\abs{I}^{1/t_1+1/s_1'}\abs{J}^{1/t_2+1/s_2'} \\
	 &= \abs{R}\mathcal{O}_{(t_1,t_2),(s_1,s_2)}(b;K)\abs{I}^{1/t_1-1/s_1}\abs{J}^{1/t_2-1/s_2}
	\end{align*}
	and similarly for the other term. 
	Dividing with $\abs{R},$ the claimed estimate follows.
\end{proof}

\section{Upper and lower bounds}
In this section we prove all the stated lower bounds and those upper bounds that admit a short proof, with the remaining upper bounds postponed to sections \ref{section:alphahöldercases1} and \ref{sect:last}.

\subsection{The case $p_i=q_i>1,$ $i=1,2$ } This case is not new, other proofs are contained e.g. in \cite{HPW2018} and \cite{LMV2019biparcom} both that treat the problem in the Bloom setup. Given the awf argument prepared in the previous section, the arguments are shortly stated and we gather them here in the unweighted off-diagonal setting in Proposition \ref{prop:bicom-bmo}. Also, when in addition all the exponents are the same, we record how to derive the bi-parameter Bloom type lower bound directly from Proposition \ref{prop:oscupperbound}. Proposition \ref{prop:biparBloom} is not new either, the special case with the Riesz transforms is contained in \cite{HPW2018} and the result with the same non-degeneracy assumptions as we use is in \cite{LMV2019biparcom}. In \cite{LMV2019biparcom} the far simpler median method is used which limits their considerations to real-valued functions $b,$ on the other hand the median method works also for iterated commutator. On the other side, by the awf argument, we can consider complex-valued functions $b,$ however we have no hope of characterizing the iterated cases.

\begin{prop}\label{prop:bicom-bmo} Let $1<p_i=q_i<\infty,$ $i=1,2$ and assume that $b\in L^1_{loc}.$ Then, 
\allowdisplaybreaks \begin{align*}
	\Norm{b}{\bmo(\R^{d_1}\times\R^{d_2})} \lesssim \mathcal{O}_{(p_1,p_2),(p_1,p_2)}(b;K) \leq \|[b,T]]\|_{L^{p_1}_{x_1} L^{p_2}_{x_2} \to L^{p_1}_{x_1} L^{p_2}_{x_2}} \lesssim \Norm{b}{\bmo(\R^{d_1}\times\R^{d_2})} 
\end{align*}
\end{prop}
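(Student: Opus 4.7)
The statement is a chain of three inequalities; each has its own natural path. The plan is to handle the leftmost and middle inequalities directly from the machinery already assembled in Section~\ref{sect:awf}, and to invoke the bi-parameter representation / paraproduct machinery from the cited literature for the rightmost (upper) bound.

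For the leftmost inequality, I would apply Proposition~\ref{prop:oscupperbound} in the equal-exponent case $(t_1,t_2)=(s_1,s_2)=(p_1,p_2)$. Here the exponents of $|I|$ and $|J|$ on the right-hand side cancel, yielding
\[
\osc(b;R)\lesssim \mathcal{O}_{(p_1,p_2),(p_1,p_2)}(b;K)
\]
uniformly in the rectangle $R=I\times J$. Taking the supremum over all such rectangles gives $\|b\|_{\bmo(\R^{d_1}\times\R^{d_2})}\lesssim \mathcal{O}_{(p_1,p_2),(p_1,p_2)}(b;K)$. I expect no obstacle here; it is precisely the point of having set up the oscillation estimate in Proposition~\ref{prop:oscupperbound}.

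For the middle inequality, I would unwind the definition of the off-support norm. For rectangles $R,\wt R$ with $\dist(I,\wt I)\sim A\ell(I)$, $\dist(J,\wt J)\sim A\ell(J)$ and $f\in L^\infty(R)$, $g\in L^\infty(\wt R)$ with $\|f\|_\infty,\|g\|_\infty\le 1$, the supports of $f$ and $g$ are disjoint in both parameters, so the kernel representation gives
\[
\int\!\!\int (b(x)-b(y))K(x,y)f(y)g(x)\ud y\ud x=\langle [b,T]f,g\rangle.
\]
Then H\"older's inequality in the mixed-norm space gives
\[
|\langle [b,T]f,g\rangle|\le \|[b,T]\|_{L^{p_1}_{x_1}L^{p_2}_{x_2}\to L^{p_1}_{x_1}L^{p_2}_{x_2}}\,\|f\|_{L^{p_1}_{x_1}L^{p_2}_{x_2}}\,\|g\|_{L^{p_1'}_{x_1}L^{p_2'}_{x_2}},
\]
and since $\|f\|_{L^{p_1}_{x_1}L^{p_2}_{x_2}}\le |I|^{1/p_1}|J|^{1/p_2}$ and $\|g\|_{L^{p_1'}_{x_1}L^{p_2'}_{x_2}}\le |\wt I|^{1/p_1'}|\wt J|^{1/p_2'}=|I|^{1/p_1'}|J|^{1/p_2'}$, the quotient in the definition of $\mathcal{O}_{(p_1,p_2),(p_1,p_2)}^A(b;K)$ is bounded by the full commutator norm. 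Taking the supremum concludes this step.

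The rightmost (upper) bound $\|[b,T]\|_{L^{p_1}_{x_1}L^{p_2}_{x_2}\to L^{p_1}_{x_1}L^{p_2}_{x_2}}\lesssim \|b\|_{\bmo(\R^{d_1}\times\R^{d_2})}$ is the only truly nontrivial part. Here I would invoke Martikainen's bi-parameter dyadic representation theorem to write $T$ as an average of bi-parameter dyadic shifts and (full/partial) paraproducts, and then commute $b$ with each model operator. The key observation is that if $b\in \bmo(\R^{d_1}\times \R^{d_2})$, then for almost every $x_1$ the slice $b(x_1,\cdot)$ lies in $\BMO(\R^{d_2})$ with a uniform bound, and symmetrically for the other parameter; this reduces the paraproduct estimates in each parameter to the one-parameter Coifman--Rochberg--Weiss bound, while the shift terms are handled by their good localization properties. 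This is exactly the argument carried out in Holmes--Petermichl--Wick \cite{HPW2018} and Li--Martikainen--Vuorinen \cite{LMV2019biparcom} (in the more general Bloom setting), and the unweighted statement needed here follows by specializing those results. The main obstacle in writing this part from scratch would be redeveloping the paraproduct / shift decomposition; since the goal of this paper is the off-diagonal theory, I would simply cite those references and let the result stand.
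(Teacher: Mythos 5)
Your proposal matches the paper's argument step for step: the leftmost inequality is Proposition~\ref{prop:oscupperbound} specialized to $(t_i,s_i)=(p_i,p_i)$, the middle one is the kernel representation plus H\"older in the mixed norm, and the upper bound is delegated to the bi-parameter Bloom literature (the paper points to \cite{LMV:Bloom} for the precise mixed-norm version, but \cite{HPW2018} and \cite{LMV2019biparcom} carry the same content). No material differences.
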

\begin{proof}  The first estimate is immediate from \ref{prop:oscupperbound}, while the second follows by a simple application of Hölders' inequality. 
	Hence, the only claim left to show is the upper bound
\allowdisplaybreaks \begin{align*}\label{jjj}
	 \Norm{[b,T]}{L^{p_1}_{x_1}L^{p_2}_{x_2}\to L^{p_1}_{x_1}L^{p_2}_{x_2}} \lesssim 	\Norm{b}{\bmo(\R^{d_1}\times\R^{d_2})}.
\end{align*}
This, is proved with exactly the same argument as the commutator upper bounds are proved in \cite{LMV:Bloom}, the fact that we have mixed norms appear, contrary to the non-mixed cases, plays no significant role in the proof at all.
\end{proof}

Let us then turn to the Bloom type lower bound.
Recall that a positive function $\mu$ is in the bi-parameter $A_p$ if 
\[
[\mu]_{A_p(\R^{d_1}\times\R^{d_2})} := \sup_{R}\ave{\mu}_R\ave{\mu^{-\frac{p'}{p}}}_R^{\frac{p}{p'}}<\infty,
\]
and for a positive locally integral function $\nu$ we write $b\in\bmo_{\nu},$ if 
\[
\Norm{b}{\bmo_{\nu}} := \sup_R\frac{1}{\nu(R)}\int_R\abs{b-\ave{b}_R}<\infty,\quad \nu(R) = \int_R\nu.
\]
Notice that if we have two weights $\lambda,\mu\in A_p,$ then by a simple application of Hölder's inequality we have that $\nu = (\mu/\lambda)^{1/p}\in A_2.$

Also in the Bloom case, we use an off-support norm. The only difference compared to Definition \ref{norm:1} is that now the normalization is modified and we consider the quantity
 \[
  \mathcal{O}_{(p,p)}(b;K;\mu;\lambda) =  \sup \Big|  \frac{\int_{\R^d\times \R^d}(b(x)-b(y)) K(x,y)f(y)g(x) \ud y \ud x}{ \mu(R)^{1/p} [\lambda^{-\frac{p'}{p}}(\wt{R})]^{1/p'}  } \Big|
 \]
 where the supremum is taken over all such functions $f$ and $g$ as in the Definition \ref{norm:1}.

\begin{prop}\label{prop:biparBloom} Let $b\in L^1_{\loc}$ and let $\mu,\lambda$ be bi-parameter $A_p$ weights. Then, we have that
	\[
	\Norm{b}{\bmo(\nu)} \lesssim_{[\mu]_{A_p},[\lambda]_{A_p}} \mathcal{O}^A_{(p,p)}(b;K;\mu;\lambda) \leq  \Norm{[b,T]}{L^p(\mu)\to L^p(\lambda)}\lesssim_{[\mu]_{A_p},[\lambda]_{A_p}} 	\Norm{b}{\bmo(\nu)},
	\]
	where $\nu := (\frac{\mu}{\lambda})^{1/p}.$ 
\end{prop}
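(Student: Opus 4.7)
My plan splits the chain into three pieces. The right-most upper bound $\Norm{[b,T]}{L^p(\mu)\to L^p(\lambda)} \lesssim \Norm{b}{\bmo(\nu)}$ is the standard bi-parameter Bloom-type commutator upper estimate and is essentially the content of \cite{LMV:Bloom, HPW2018, LMV2019biparcom}, so I would simply invoke it. The middle inequality $\mathcal{O}^A_{(p,p)}(b;K;\mu;\lambda) \leq \Norm{[b,T]}{L^p(\mu)\to L^p(\lambda)}$ will follow from a one-line H\"older-and-duality step: for admissible $f,g$ with $\supp f \subset R$, $\supp g \subset \wt R$ and $\Norm{f}{L^{\infty}}, \Norm{g}{L^{\infty}} \leq 1$, I estimate
\[
\babs{\langle [b,T]f,g\rangle} \leq \Norm{[b,T]f}{L^p(\lambda)}\Norm{g}{L^{p'}(\lambda^{-p'/p})} \leq \Norm{[b,T]}{L^p(\mu)\to L^p(\lambda)}\, \mu(R)^{1/p}\, \lambda^{-p'/p}(\wt R)^{1/p'},
\]
and divide through by the normalization.

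The substance therefore sits in the left-most inequality. I would fix a rectangle $R$, apply Proposition \ref{prop:oscform} to obtain
\[
\abs{R}\osc(b;R) \lesssim \babs{\langle [b,T]g_1, h_1\rangle} + \babs{\langle [b,T]h_2, g_2\rangle}
\]
with $g_1 = 1_{\wt R}$, $g_2 = 1_R$, $\abs{h_1}\lesssim 1_R$, $\abs{h_2}\lesssim 1_{\wt R}$, and then recognize each pairing as an instance of the integrand of Definition \ref{norm:1} with the roles of $R$ and $\wt R$ interchanged (the input function lives on $\wt R$ and the test function on $R$). The off-support norm then bounds each term by $\mathcal{O}^A_{(p,p)}(b;K;\mu;\lambda)\, \mu(\wt R)^{1/p}\, \lambda^{-p'/p}(R)^{1/p'}$. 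Bi-parameter $A_p$ doubling, applied to product rectangles of comparable dimensions at controlled distance, then absorbs $\mu(\wt R)$ into $\mu(R)$ up to an $[\mu]_{A_p}$-dependent constant.

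What remains is to convert the weighted normalization into $\nu(R)$, namely the comparison
\[
\mu(R)^{1/p}\, \lambda^{-p'/p}(R)^{1/p'} \lesssim_{[\mu]_{A_p},[\lambda]_{A_p}} \nu(R),
\]
the reverse of which is the trivial H\"older inequality. For the stated direction I would use the bi-parameter $A_p \Rightarrow A_\infty$ passage together with reverse Jensen over product rectangles to pass through $\ave{\nu}_R \sim \ave{\mu}_R^{1/p}\ave{\lambda}_R^{-1/p}$, and then an $A_p$ duality step to replace $\ave{\lambda}_R^{-1/p}$ by $\ave{\lambda^{-p'/p}}_R^{1/p'}$; multiplying by $|R|$ supplies the display. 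Stringing the three estimates together yields $\abs{R}\osc(b;R) \lesssim \mathcal{O}^A_{(p,p)}(b;K;\mu;\lambda) \cdot \nu(R)$, i.e., $\Norm{b}{\bmo(\nu)} \lesssim \mathcal{O}^A_{(p,p)}(b;K;\mu;\lambda)$. The main obstacle I foresee is exactly this last weighted comparison: it demands bi-parameter $A_\infty$ reverse Jensen and doubling over non-concentric product rectangles with the right quantitative dependence on the $A_p$ data. Once those ingredients are in place, the awf bookkeeping of Proposition \ref{prop:oscform} carries the argument over verbatim from the unweighted Bloom case.
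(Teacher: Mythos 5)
Your overall strategy matches the paper exactly: the middle inequality by H\"older and duality, the right-most inequality by citation, and the left-most inequality via Proposition \ref{prop:oscform}, the definition of the off-support norm, and bi-parameter $A_p$ doubling to replace $\wt R$ by $R$. The chain of reductions is identical, and your bookkeeping is correct.

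The one place where you diverge is the final weighted comparison $\mu(R)^{1/p}\,\lambda^{-p'/p}(R)^{1/p'} \lesssim_{[\mu]_{A_p},[\lambda]_{A_p}} \nu(R)$. You propose to route this through the bi-parameter $A_p \Rightarrow A_\infty$ passage and a reverse Jensen inequality over product rectangles to get $\ave{\mu}_R^{1/p}\ave{\lambda}_R^{-1/p} \lesssim \ave{\nu}_R$, followed by the $A_p$ duality step on $\lambda$. This does work (the needed direction $\ave{\mu}_R^{1/p}\ave{\lambda}_R^{-1/p}\le C\exp\bigl(\fint_R\log\nu\bigr)\le C\ave{\nu}_R$ follows from reverse Jensen for $\mu$, plain Jensen for $\lambda$, and plain Jensen once more), but it imports more machinery than is necessary, and you would need to carefully justify the quantitative reverse Jensen for product rectangles with dependence only on the strong $A_p$ constants. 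The paper instead proves $\ave{\mu}_R^{1/p}\ave{\lambda}_R^{-1/p}\leq [\mu]_{A_p}^{1/p}\ave{\nu}_R$ by an entirely elementary chain: Cauchy--Schwarz gives $1\leq \ave{\nu}_R\ave{\nu^{-1}}_R$, H\"older with exponents $(p,p')$ applied to $\nu^{-1}=\lambda^{1/p}\mu^{-1/p}$ gives $\ave{\nu^{-1}}_R\leq \ave{\lambda}_R^{1/p}\ave{\mu^{-p'/p}}_R^{1/p'}$, and then the $A_p$ condition on $\mu$ replaces $\ave{\mu^{-p'/p}}_R^{1/p'}$ by $[\mu]_{A_p}^{1/p}\ave{\mu}_R^{-1/p}$; rearranging gives the claim, and then the $A_p$ condition on $\lambda$ turns $\lambda(R)^{1/p}(\int_R\lambda^{-p'/p})^{1/p'}$ into $[\lambda]_{A_p}^{1/p}\abs{R}$. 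Nothing beyond H\"older and the definitions is used, which keeps the argument self-contained and sharply quantitative. Your $A_\infty$ route is a genuine alternative, but the paper's is shorter and avoids any appeal to reverse Jensen or bi-parameter $A_\infty$ theory; I would recommend adopting it.
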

\begin{proof} By Proposition \ref{prop:oscform} we estimate
	\begin{align*}
	 \int_R\abs{b-\ave{b}_R}=  \abs{R}\osc(b;R) &\lesssim  \sum_{i=1,2} \abs{\langle [b,T]g_i,h_i\rangle} \\
	 &\leq \mathcal{O}_{(p,p)}(b;K;\mu;\lambda)\big( \mu(R)^{1/p} [\lambda^{-\frac{p'}{p}}(\wt{R})]^{1/p'}  \\
	 &\qquad\qquad+ \mu(\wt{R})^{1/p} [\lambda^{-\frac{p'}{p}}(R)]^{1/p'} \big).
	\end{align*}
Since $A_p$ weights are doubling and
\[
\dist(I,\wt{I})\sim \diam(I),\qquad \dist(J,\wt{J})\sim\diam(J),
\] 
it follows that 
\begin{align}\label{qwerty}
	\mu(\wt{R}) \sim_{[\mu]_{A_p}} \mu(R),\qquad  \lambda(\wt{R}) \sim_{[\lambda]_{A_p}} \lambda(R),\qquad \nu(\wt{R}) \sim_{[\mu]_{A_p},[\lambda]_{A_p}} \nu(R).
\end{align}

Hence, we estimate the left-term of the previous estimate with the index $i=1$ as
\begin{align*}
\mu(\wt{R})^{1/p} [\lambda^{-\frac{p'}{p}}(R)]^{1/p'}&= \mu(\wt{R})^{1/p} \left( \int_{R}\lambda^{-\frac{p'}{p}}\right)^{1/p'} \sim_{A,[\lambda]_{A_p}}  \mu(R)^{1/p} \left( \int_{R}\lambda^{-\frac{p'}{p}}\right)^{1/p'} \\
	&\overset{*}{\leq}  [\mu]_{A_p}^{1/p}\ave{\nu}_R \lambda(R)^{1/p} \left( \int_R\lambda^{-\frac{p'}{p}}\right)^{1/p'} \leq  [\mu]_{A_p}^{1/p} [\lambda]_{A_p}^{1/p}\nu(R) \\
	&\lesssim_{ [\mu]_{A_p}, [\lambda]_{A_p}}\nu(R)
\end{align*}
where in the estimate marked with $*$ we used that
\[
\mu(R)^{1/p}\lambda(R)^{-1/p} \leq [\mu]_{A_p}^{1/p}\ave{\nu}_R,
\]
which follows by a few applications of Hölder's inequality and a rearranging of the estimate
\[
1 \leq \ave{\nu}_R\ave{\nu^{-1}}_R\leq \ave{\nu}_R\ave{\lambda}_R^{1/p}\ave{\mu^{-\frac{p'}{p}}}_R^{1/p'} \leq \ave{\nu}_R\ave{\lambda}_R^{1/p}\ave{\mu}_R^{-1/p}[\mu]_{A_p}^{1/p}.
\]
Using the other estimates from the line \eqref{qwerty} it follows that $ \mu(R)^{1/p} [\lambda^{-\frac{p'}{p}}(\wt{R})]^{1/p'}$ satisfies the same estimate, and hence, we have shown the first estimate,
\begin{align*}
\int_R\abs{b-\ave{b}_R} \lesssim_{[\mu]_{A_p}, [\lambda]_{A_p}}
 \mathcal{O}_{(p,p)}(b;K;\mu;\lambda)\nu(R).
\end{align*}

For the middle estimate, by Hölder's inequality we immediately have that
\[
\mathcal{O}^A_{(p,p)}(b;K;\mu;\lambda) \leq \Norm{[b,T]}{L^p(\mu)\to L^p(\lambda)},
\]
and the right-most estimate $\Norm{[b,T]}{L^p(\mu)\to L^p(\lambda)}\lesssim \Norm{b}{\bmo(\R^{d_1}\times\R^{d_2})}$ is proved in exactly the stated form in \cite{LMV2019biparcom}.
\end{proof}

\subsection{The three cases $p_i<q_i,$ $i=1,2$ and $p_1<q_1,$ $p_2>q_2,$ and  $p_2<q_2,$ $p_1>q_1$} In these three cases we find that the commutator is bounded if and only if $b$ is a constant function almost everywhere. By redefining $b$ in a set of measure zero we may assume that $b$ is a constant.

\begin{prop} Let $b\in L^1_{\loc},$  $p_i<q_i,$ $i=1,2,$ and assume that $\mathcal{O}_{p,q}(b;K) <\infty.$ Then, $b$ is a constant.  Conversely, if  $b$ is a constant, then $[b,T] = 0.$
\end{prop}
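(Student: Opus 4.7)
The plan is to feed $\mathcal{O}_{p,q}(b;K)<\infty$ into Proposition~\ref{prop:oscupperbound} and then exploit the positivity of both exponents $\alpha_i$ by a Lebesgue-differentiation/slicing argument in each variable. Taking $(t_1,t_2)=(p_1,p_2)$ and $(s_1,s_2)=(q_1,q_2)$ in Proposition~\ref{prop:oscupperbound} yields
\begin{equation*}
\osc(b;R)\lesssim \mathcal{O}_{p,q}(b;K)\,\ell(I)^{\alpha_1}\ell(J)^{\alpha_2}
\end{equation*}
for every rectangle $R=I\times J$, where $\alpha_i=d_i(1/p_i-1/q_i)>0$ by hypothesis. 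The key structural feature is that both exponents are strictly positive, so the right-hand side vanishes as soon as either side-length shrinks to $0$.

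Next, fix a cube $I\subset\R^{d_1}$ and introduce the $x_2$-sliced averages $g_J(x_1):=\fint_J b(x_1,x_2)\ud x_2$ for cubes $J\subset\R^{d_2}$. Jensen's inequality in the $x_2$ variable gives $\osc(g_J;I)\le \osc(b;R)\lesssim \ell(I)^{\alpha_1}\ell(J)^{\alpha_2}$. Regarding $y_2\mapsto b(\cdot,y_2)$ as an $L^1(I)$-valued function lying in $L^1_{\loc}(\R^{d_2};L^1(I))$ by Fubini, the vector-valued Lebesgue differentiation theorem produces, for a.e.\ $x_2^0\in\R^{d_2}$, the convergence $g_{J(x_2^0,r)}\to b(\cdot,x_2^0)$ in $L^1(I)$ as $r\to 0$. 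Passing to the limit $r\to 0$ in the previous inequality forces $\osc(b(\cdot,x_2^0);I)=0$ for a.e.\ $x_2^0$. Letting $I$ range over a countable exhaustion of $\R^{d_1}$ (e.g.\ the standard dyadic cubes), we conclude that $b(x_1,x_2)=\beta(x_2)$ almost everywhere for some $\beta\in L^1_{\loc}(\R^{d_2})$.

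Finally, substituting this representation back into the oscillation estimate gives $\osc(\beta;J)=\osc(b;R)\lesssim \ell(I)^{\alpha_1}\ell(J)^{\alpha_2}$; letting $\ell(I)\to 0$ with $J$ fixed forces $\osc(\beta;J)=0$ for every $J$, so $\beta$, and hence $b$, is almost everywhere constant. The converse is immediate from the linearity of $T$: if $b\equiv c$ is constant then $[b,T]f=cTf-T(cf)=0$. The only delicate step is the vector-valued Lebesgue differentiation used to upgrade pointwise convergence of the slice averages into $L^1(I)$-convergence of oscillations; I expect this to be the one (standard but nontrivial) technical point to record carefully.
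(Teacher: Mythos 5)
Your proof is correct and follows essentially the same route as the paper: apply Proposition~\ref{prop:oscupperbound} to bound $\osc(b;I\times J)\lesssim\ell(I)^{\alpha_1}\ell(J)^{\alpha_2}$, use the Lebesgue differentiation theorem to shrink one side and conclude that $b$ is constant in one variable, then handle the other variable. The only cosmetic differences are that you formalize the differentiation step via the vector-valued ($L^1(I)$-valued) Lebesgue theorem and close by substituting the partial constancy back and sending $\ell(I)\to 0$, whereas the paper states the pointwise limit more tersely and finishes by a symmetric argument in the second variable; both are sound.
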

\begin{proof} Only one direction is non-trivial. Fix a point $x_2\in\R^d$ and consider a sequence of cubes $\R^{d_2}\supset J_k\to \{x_2\}.$ The Lebesgue differentiation theorem shows that
	\allowdisplaybreaks \begin{align*}
		\fint_I\abs{b(x_1,x_2) - \ave{b(\cdot,x_2)}_I}\ud x_1 = \lim_{k\to \infty }\osc(b;I\times J_k)
	\end{align*}
	for almost every  $x_1\in \R^{d_1}.$
	By Proposition \ref{prop:oscupperbound} we dominate the right-hand side with
	\allowdisplaybreaks \begin{align*}
		 \mathcal{O}_{p,q}(b;K) \abs{I}^{1/p_1-1/q_1}\lim_{k\to\infty}\abs{J_k}^{1/p_2-1/q_2} = 0,
	\end{align*}
where in the last step we used that $1/p_2-1/q_2>0.$
This shows that $b(\cdot,x_2)$ is a constant on all cubes $I\subset\R^{d_1},$ hence on $\R^{d_1}.$  Similarly we see that $b(x_1,\cdot)$ is a constant almost everywhere on $\R^{d_2}$. It follows that $b$ is a constant almost everywhere. 
\end{proof}

\begin{prop} Let $p_1<q_1,$ $p_2>q_2$ and assume that $\mathcal{O}_{p,q}(b;K)<\infty.$ Then, $b$ is a constant. Conversely, if  $b$ is a constant, then $[b,T] = 0.$
\end{prop}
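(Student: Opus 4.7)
The converse direction is trivial. For the forward direction, the key tool is Proposition \ref{prop:oscupperbound}, which in our setting gives
\[
\osc(b;I\times J)\lesssim \mathcal{O}_{p,q}(b;K)\,|I|^{1/p_1-1/q_1}|J|^{1/p_2-1/q_2},
\]
where now $1/p_1-1/q_1>0$ while $1/p_2-1/q_2<0$. The plan is to exploit this asymmetry: first shrink $|I|$ to extract constancy in the second variable, then blow up $|J|$ to extract constancy in the first.

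First I would fix a cube $J\subset\R^{d_2}$ and a point $x_1\in\R^{d_1}$, and let a sequence of cubes $I_k\ni x_1$ with $I_k\to\{x_1\}$. Since $1/p_1-1/q_1>0$ and $|J|$ is fixed, the right-hand side above tends to $0$. By the Lebesgue differentiation theorem applied to the function $y_1\mapsto \fint_J b(y_1,\cdot)$ and to $b$ itself (for a.e.\ $x_1$), the left-hand side satisfies
\[
\osc(b;I_k\times J)\longrightarrow \fint_J\big|b(x_1,y_2)-\bave{b(x_1,\cdot)}_J\big|\,\ud y_2
\]
for almost every $x_1$. Thus this mean oscillation vanishes for a.e.\ $x_1$ and every cube $J$, which gives that $b(x_1,\cdot)$ is constant on $\R^{d_2}$ for a.e.\ $x_1$. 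Write $b(x_1,x_2)=c(x_1)$ almost everywhere.

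Next I would use that, with $b$ of this form,
\[
\osc(b;I\times J)=\fint_I|c(y_1)-\ave{c}_I|\,\ud y_1
\]
is independent of $J$. Fixing $I$ and letting $|J|\to\infty$, the right-hand side in the oscillation bound vanishes because $1/p_2-1/q_2<0$. Hence $\osc(b;I\times J)=0$ for every cube $I$, which forces $c$ to be constant, and therefore $b$ is constant almost everywhere.

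There is really no obstacle here once Proposition \ref{prop:oscupperbound} is in hand; the only point that requires care is that the two steps genuinely require the opposite signs of $1/p_i-1/q_i$, which is exactly the hypothesis $p_1<q_1$, $p_2>q_2$.
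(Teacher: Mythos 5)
Your proof is correct and follows essentially the same route as the paper: shrink $I$ using $1/p_1-1/q_1>0$ to get constancy of $b(x_1,\cdot)$, then let $|J|\to\infty$ using $1/p_2-1/q_2<0$ to get constancy in the first variable. The only difference is cosmetic (the paper phrases the first step as "the same argument as above," referring to the $p_i<q_i$ case).
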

\begin{proof}
	 By the same argument as above we see that $b(x_1,\cdot)$ is a constant and hence for any choice of a cube $J\subset\R^{d_2}$ we have that
	\allowdisplaybreaks \begin{align*}
\fint_I\abs{b(x_1,x_2) - \ave{b(\cdot,x_2)}_I}\ud x_1 &=  \fint_I\fint_J\abs{b-\ave{b}_{I\times J}}  \\
	&\lesssim \mathcal{O}_{p,q}(b;K) \abs{I}^{1/p_1-1/q_1}\abs{J}^{1/p_2-1/q_2}.
	\end{align*}
As $1/p_2-1/q_2< 0,$ letting $\abs{J}\to\infty$ shows that
	\[
	\fint_I\abs{b(x_1,x_2) - \ave{b(\cdot,x_2)}_I}\ud x_1  = 0.
	\]
	Hence, also $b(\cdot,x_2)$ is a constant and consequently $b$ is a constant.
\end{proof}

The symmetric case with a symmetric proof is
\begin{prop} Let $p_1>q_1,$ $p_2<q_2$ and assume that $\mathcal{O}_{p,q}(b;K)<\infty.$ Then, $b$ is a constant. Conversely, if  $b$ is a constant, then $[b,T] = 0.$
\end{prop}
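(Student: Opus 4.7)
The plan is to mirror the argument of the preceding proposition with the roles of the two parameters swapped. The converse direction is again trivial, so I would focus on the forward implication that $\mathcal{O}_{p,q}(b;K)<\infty$ forces $b$ to be constant almost everywhere. The key observation is that since $p_1>q_1$ and $p_2<q_2$, the two quantities $1/p_1-1/q_1$ and $1/p_2-1/q_2$ have opposite signs, so in one slot I can let the side-length shrink (via Lebesgue differentiation) and in the other slot I can let the side-length blow up, and either way the corresponding factor in Proposition~\ref{prop:oscupperbound} kills the estimate.

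First I would establish that $b(\cdot,x_2)$ is a constant for almost every $x_2$. Since $1/p_2-1/q_2>0$, the Lebesgue differentiation step from the first proposition is available in the second parameter: taking a sequence of cubes $J_k\subset\R^{d_2}$ shrinking to a point $x_2$, Lebesgue differentiation identifies
\[
\fint_I\abs{b(x_1,x_2)-\ave{b(\cdot,x_2)}_I}\,\ud x_1 = \lim_{k\to\infty}\osc(b;I\times J_k)
\]
for almost every $x_1\in\R^{d_1}$, and Proposition~\ref{prop:oscupperbound} dominates the right-hand side by $\mathcal{O}_{p,q}(b;K)\abs{I}^{1/p_1-1/q_1}\abs{J_k}^{1/p_2-1/q_2}\to 0$ as $\abs{J_k}\to 0$. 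Thus $b(\cdot,x_2)$ is constant on every cube $I\subset\R^{d_1}$, hence on all of $\R^{d_1}$.

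Next I would show that $b(x_1,\cdot)$ is a constant for almost every $x_1$. Lebesgue differentiation in the first slot is now unavailable since $1/p_1-1/q_1<0$, so I would mimic the blow-up trick from the preceding proposition and send $\abs{I}\to\infty$ instead. Using that $b(\cdot,x_2)$ is already known to be constant, for any cube $J\subset\R^{d_2}$ the oscillation collapses as
\[
\fint_J\abs{b(x_1,x_2)-\ave{b(x_1,\cdot)}_J}\,\ud x_2 = \fint_I\fint_J\abs{b-\ave{b}_{I\times J}}\lesssim \mathcal{O}_{p,q}(b;K)\abs{I}^{1/p_1-1/q_1}\abs{J}^{1/p_2-1/q_2},
\]
and the right-hand side vanishes as $\abs{I}\to\infty$ because $1/p_1-1/q_1<0$. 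Combining the two steps yields that $b$ is constant almost everywhere. I do not anticipate any serious obstacle: the proof is entirely parallel to the preceding one, the only subtlety being to make sure the opposite-sign structure of the two exponents is exploited in the correct slot.
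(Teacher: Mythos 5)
Your argument is exactly the ``symmetric proof'' the paper alludes to but does not write out: you shrink $J_k$ in the second slot (where $1/p_2-1/q_2>0$) to get constancy of $b(\cdot,x_2)$ via Lebesgue differentiation, then blow up $\abs{I}$ in the first slot (where $1/p_1-1/q_1<0$) to get constancy of $b(x_1,\cdot)$, mirroring the two preceding propositions. The proof is correct and matches the paper's intended approach.
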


\subsection{The cases $p_1<q_1,$ $p_2= q_2,$ and $p_2<q_2,$ $p_1=q_1$} In these cases the function $b$ is constant in one variable slot and H\"{o}lder continuous in the other. 
\begin{prop}\label{prop:lb:a1c2} Let $b\in L^1_{\loc}$, $p_1<q_1$ and $p_2=q_2$ and $\mathcal{O}_{p,q}(b;K) <\infty.$ Then,  $b(x_1,\cdot)$ is a constant  and there holds that 
	\begin{align}\label{kek}
	\Norm{b(\cdot,x_2)}{\dot C^{0,\alpha_1}_{x_1}} \lesssim \mathcal{O}_{p,q}(b;K).
	\end{align}
Conversely, if $b$ satisfies the above properties, then
$$  \Norm{[b,T]}{L^{p_1}_{x_1}L^{p_2}_{x_2}\to L^{q_1}_{x_1}L^{p_2}_{x_2}} \lesssim \Norm{b(\cdot,x_2)}{\dot C^{0,\alpha_1}_{x_1}}.$$
	
\end{prop}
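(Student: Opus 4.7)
The statement cleanly splits into a lower bound and an upper bound, which call for rather different tools.

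For the \textbf{lower bound} my plan is to read both conclusions off the oscillation control of Proposition~\ref{prop:oscupperbound}. Specialized to the present exponents with $p_2=q_2$ the factor $\abs{J}^{1/p_2-1/q_2}$ equals $1$, so for every rectangle $R=I\times J$
\[
\osc(b;I\times J)\lesssim \mathcal{O}_{p,q}(b;K)\,\ell(I)^{\alpha_1},
\]
using that $\alpha_1=d_1(1/p_1-1/q_1)$. I would first show that $b(x_1,\cdot)$ is constant: fix a cube $J\subset\R^{d_2}$, take a shrinking sequence of cubes $I_k\to\{x_1\}$, and pass to the limit via the Lebesgue differentiation theorem applied in the first variable to the function $x_1\mapsto\fint_J\babs{b(x_1,x_2)-\ave{b(x_1,\cdot)}_J}\ud x_2$; since $\alpha_1>0$, the right-hand side tends to zero, so $b(x_1,\cdot)$ is constant on every $J$, hence on $\R^{d_2}$. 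Relabeling $b(x)=\beta(x_1)$ a.e., the very same oscillation inequality becomes $\osc(\beta;I)\lesssim \mathcal{O}_{p,q}(b;K)\,\ell(I)^{\alpha_1}$ for every cube $I\subset\R^{d_1}$, and the classical Campanato--Meyers characterization of Hölder continuity then yields $\Norm{\beta}{\dot C^{0,\alpha_1}}\lesssim \mathcal{O}_{p,q}(b;K)$, which is \eqref{kek}.

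For the \textbf{upper bound} one now has $b(x)=\beta(x_1)$ with $\beta\in\dot C^{0,\alpha_1}$ and
\[
[b,T]f(x_1,x_2)=\int_{\R^d}\bigl(\beta(x_1)-\beta(y_1)\bigr)K(x,y)f(y)\ud y_1\ud y_2.
\]
My target is a pointwise bound of the shape
\[
\babs{[b,T]f(x_1,x_2)}\lesssim \Norm{\beta}{\dot C^{0,\alpha_1}}\int_{\R^{d_1}}\frac{\mathcal{M}^{(2)}f(y_1,x_2)}{\abs{x_1-y_1}^{d_1-\alpha_1}}\ud y_1,
\]
where $\mathcal{M}^{(2)}$ stands for a maximal singular-integral-type operator acting only in the second variable. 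Granting this, the mixed-norm claim follows in two standard moves: first use the $L^{p_2}(\R^{d_2})$-boundedness of $\mathcal{M}^{(2)}$ slicewise in $y_1$, then apply the Hardy--Littlewood--Sobolev mapping property of the first-variable fractional integral $I_{\alpha_1}\colon L^{p_1}(\R^{d_1})\to L^{q_1}(\R^{d_1})$, used vector-valued with values in $L^{p_2}(\R^{d_2})$.

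The main obstacle is making the pointwise bound honest. A naive insertion of the kernel size estimate $|K(x,y)|\lesssim\abs{x_1-y_1}^{-d_1}\abs{x_2-y_2}^{-d_2}$ together with $|\beta(x_1)-\beta(y_1)|\lesssim\Norm{\beta}{\dot C^{0,\alpha_1}}\abs{x_1-y_1}^{\alpha_1}$ leaves the non-integrable factor $\abs{x_2-y_2}^{-d_2}$ in the $y_2$-integral and thereby discards the second-parameter SIO cancellation of $T$. The partial kernel representation recalled in Section~\ref{section:biparSIOs} is built exactly to retain this cancellation: for each fixed pair $x_1\neq y_1$ the slice $y_2\mapsto K((x_1,x_2),(y_1,y_2))$ extends to a one-parameter Calderón--Zygmund kernel in the second variable whose CZ-norm is $\lesssim\abs{x_1-y_1}^{-d_1}$, and the maximal truncation of the associated slicewise operator is pointwise dominated by $\abs{x_1-y_1}^{-d_1}\mathcal{M}^{(2)}f(y_1,x_2)$. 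Integrating this against the Hölder-weighted difference $|\beta(x_1)-\beta(y_1)|$ in $y_1$ then produces the required pointwise inequality. An alternative route that should also succeed is the bi-parameter dyadic representation theorem: expand $T$ into bi-parameter shifts and compute the commutator with the one-variable multiplier $\beta(x_1)$ on each shift, using the Hölder regularity of $\beta$ to generate the first-parameter gain while the second-parameter SIO structure is preserved shift by shift.
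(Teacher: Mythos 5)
Your lower-bound half coincides with the paper's proof: Proposition \ref{prop:oscupperbound} gives $\osc(b;I\times J)\lesssim \mathcal{O}_{p,q}(b;K)\,|I|^{1/p_1-1/q_1}$, shrinking $I_k\to\{x_1\}$ and using Lebesgue differentiation kills the oscillation in the second variable (so $b(x_1,\cdot)$ is constant), and then the same inequality read as a Campanato-type bound on $\fint_I|b(\cdot,x_2)-\ave{b(\cdot,x_2)}_I|$ yields \eqref{kek}. Your upper-bound strategy is also the one the paper follows (in Section \ref{sect:last}): exploit that the commutator only sees the first variable, keep the second-parameter singular-integral structure intact through the sliced kernels, and finish with Hardy--Littlewood--Sobolev for $I_{\alpha_1}\colon L^{p_1}\to L^{q_1}$ vector-valued in $L^{p_2}$. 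The paper implements this via Journ\'e's formulation: $K_2(x_1,y_1)$ is an operator-valued CZ kernel with $\Norm{K_2(x_1,y_1)}{\CZO(d_2,\delta)}\lesssim|x_1-y_1|^{-d_1}$, and the estimate is run on the bilinear form $\langle[b,T]f,g\rangle$.

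Two steps in your upper bound need repair. First, the claimed pointwise domination $\babs{[b,T]f(x_1,x_2)}\lesssim\Norm{\beta}{\dot C^{0,\alpha_1}}\int|x_1-y_1|^{\alpha_1-d_1}\mathcal{M}^{(2)}f(y_1,x_2)\,\ud y_1$ is not justified as stated: the second-variable slice operators $K_2(x_1,y_1)$ form a family varying with $(x_1,y_1)$, and there is no single maximal operator in $x_2$ that pointwise dominates all of them. What is true, and what suffices, is the uniform operator-norm bound $\Norm{K_2(x_1,y_1)f(y_1,\cdot)}{L^{p_2}_{x_2}}\lesssim|x_1-y_1|^{-d_1}\Norm{f(y_1,\cdot)}{L^{p_2}}$; you should therefore run the argument on $\abs{\langle[b,T]f,g\rangle}$ (pairing the slice against $g(x_1,\cdot)\in L^{p_2'}$) rather than pointwise, after which Minkowski and HLS go through exactly as you describe. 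Second, the representation of $\langle[b,T]f,g\rangle$ by the first-variable off-diagonal kernel integral is not automatic when the first-variable supports of $f$ and $g$ overlap; the paper spends Lemma \ref{lem:pvrep} on this, writing each one-parameter CZO as a principal-value operator plus a bounded multiplication operator (which drops out of the commutator) and passing to the limit by dominated convergence using precisely the H\"older regularity of $\beta$. Without some such argument your starting identity for $[b,T]f$ is unproven. Your alternative route through the dyadic representation theorem is viable — it is essentially what the paper does for the symmetric case $p_1=q_1$, $p_2<q_2$ in Section \ref{section:alphah�ldercases1} — but it is the longer of the two options here.
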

\begin{proof} We see by the same argument as above that $b(x_1,\cdot)$ is a constant for almost every $x_1\in\R^{d_1},$ and by redefining in a set of measure zero, constant everywhere. Thus, for every $x_2\in\R^{d_2}$ there holds that 
	\allowdisplaybreaks \begin{align*}
&\fint_I\abs{b(x_1,x_2)-\bave{b(\cdot,x_2)}_{I}}\ud x_1 = \fint_I\fint_{J}\abs{b-\ave{b}_{I\times J}}  \\
	&\lesssim \mathcal{O}_{p,q}(b;K) \abs{I}^{1/p_1-1/q_1}\abs{J}^{1/p_2-1/p_2} = \mathcal{O}_{p,q}(b;K) \abs{I}^{1/p_1-1/q_1}
	\end{align*}
	and this implies \eqref{kek}.
	The converse direction is proved in Proposition \ref{prop:a1c2} of Section \ref{section:alphahöldercases1}.
\end{proof}

The symmetric case with a symmetric proof is
\begin{prop}\label{prop:lb:c1a2} Let $p_2<q_2$ and $p_1=q_1$ and assume that $\mathcal{O}_{p,q}(b;K) <\infty.$ Then,  $b(\cdot,x_2)$ is a constant and  
	\[
	\Norm{b(x_1,\cdot)}{\dot C^{0,\alpha_2}_{x_2}} \lesssim \mathcal{O}_{p,q}(b;K).
	\]
Conversely, if the above conclusions hold, then 
$$  \Norm{[b,T]}{L^{p_1}_{x_1}L^{p_2}_{x_2}\to L^{p_1}_{x_1}L^{q_2}_{x_2}} \lesssim 	\esssup_{x_1\in\R^{d_1}}\Norm{b(x_1,\cdot)}{\dot C^{0,\alpha_2}_{x_2}}.$$
\end{prop}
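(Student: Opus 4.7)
The plan is to mirror the proof of Proposition~\ref{prop:lb:a1c2} with the two parameter slots exchanged. To begin I would show that $b(\cdot,x_2)$ is constant for almost every $x_2$. Fix any cube $I\subset\R^{d_1}$ and a sequence of cubes $J_k\subset\R^{d_2}$ shrinking to a point $x_2$. By the Lebesgue differentiation theorem, for almost every $x_2\in\R^{d_2}$,
\[
\fint_I\babs{b(x_1,x_2)-\bave{b(\cdot,x_2)}_I}\ud x_1 \;=\; \lim_{k\to\infty}\osc(b;I\times J_k).
\]
Applying Proposition~\ref{prop:oscupperbound} bounds the right-hand side by $\mathcal{O}_{p,q}(b;K)\,\abs{I}^{1/p_1-1/q_1}\abs{J_k}^{1/p_2-1/q_2}$. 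Since $p_1=q_1$ the $\abs{I}$-factor is trivial, while $1/p_2-1/q_2>0$ forces $\abs{J_k}^{1/p_2-1/q_2}\to 0$. As this holds for every $I$, $b(\cdot,x_2)$ is constant for almost every $x_2$, and after redefining on a null set there is a function $\beta:\R^{d_2}\to\C$ with $b(x_1,x_2)=\beta(x_2)$.

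Next I would extract the H\"older estimate for $\beta$. For any cubes $I\subset\R^{d_1}$ and $J\subset\R^{d_2}$ we have $\ave{b}_{I\times J}=\ave{\beta}_J$, hence by Proposition~\ref{prop:oscupperbound},
\[
\osc(\beta;J) \;=\; \osc(b;I\times J) \;\lesssim\; \mathcal{O}_{p,q}(b;K)\,\abs{I}^{1/p_1-1/q_1}\abs{J}^{1/p_2-1/q_2} \;=\; \mathcal{O}_{p,q}(b;K)\,\ell(J)^{\alpha_2},
\]
where the last equality uses $\abs{J}^{1/p_2-1/q_2}=\ell(J)^{\alpha_2}$. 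A standard Campanato characterization then upgrades this mean-oscillation bound into the pointwise H\"older estimate $\Norm{\beta}{\dot C^{0,\alpha_2}}\lesssim\mathcal{O}_{p,q}(b;K)$, which is the desired inequality.

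For the converse direction, under the hypothesis $\esssup_{x_1}\Norm{b(x_1,\cdot)}{\dot C^{0,\alpha_2}_{x_2}}<\infty$ together with $b(\cdot,x_2)$ constant, the bound on $\Norm{[b,T]}{L^{p_1}_{x_1}L^{p_2}_{x_2}\to L^{p_1}_{x_1}L^{q_2}_{x_2}}$ runs in exact parallel with the converse portion of Proposition~\ref{prop:lb:a1c2}, whose full argument is given in Section~\ref{section:alphah�ldercases1}; one simply swaps the roles of the two parameter slots throughout. The mildly delicate point, and what I would flag as the main obstacle, is that the mixed-norm space has a built-in order of integration, so one has to apply the one-parameter Janson-type H\"older-commutator machinery in the $x_2$-slot first (for each fixed $x_1$) and then integrate in $x_1$. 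It is precisely the equality $p_1=q_1$ that makes this outer step pass through without loss.
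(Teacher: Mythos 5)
Your treatment of the forward direction is correct and matches the paper: the Lebesgue-differentiation plus Proposition~\ref{prop:oscupperbound} argument is exactly what the paper uses, and the Campanato-style upgrade from mean oscillation to pointwise H\"older continuity is standard.

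The converse direction, however, contains a genuine gap. You assert that the upper bound ``runs in exact parallel with'' the $p_1<q_1$, $p_2=q_2$ case of Proposition~\ref{prop:a1c2} by ``simply swapping the roles of the two parameter slots,'' but the paper deliberately does \emph{not} do this, and for good reason. Proposition~\ref{prop:a1c2} is proved in Section~\ref{sect:last} via Journ\'e's kernel representation (Lemma~\ref{lem:pvrep}): one writes $\langle[b,T]f,g\rangle$ as a double integral in $x_1,y_1$ over $(b(x_1)-b(y_1))\langle K_2(x_1,y_1)f(y_1,\cdot),g(x_1,\cdot)\rangle$, uses the $\CZO(d_2,\delta)$ bound on $K_2(x_1,y_1)$ to pass to $\|f(y_1,\cdot)\|_{L^{p_2}}\|g(x_1,\cdot)\|_{L^{p_2'}}$, and then applies the fractional integral $\mathsf{I}_{\alpha_1}$ in the outer $x_1$-variable. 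This works because the fractional gain lives in the \emph{outer} slot and the CZO pairing absorbs the \emph{inner} slot without changing its exponent. If you swap slots, the Journ\'e argument produces the norms $\|f\|_{L^{p_2}_{x_2}L^{p_1}_{x_1}}$ and $\|g\|_{L^{q_2'}_{x_2}L^{p_1'}_{x_1}}$ — with $x_2$ outermost — whereas the target spaces $L^{p_1}_{x_1}L^{p_2}_{x_2}$ and its dual $L^{p_1'}_{x_1}L^{q_2'}_{x_2}$ have $x_1$ outermost. These mixed-norm spaces are not comparable here (checking Minkowski's integral inequality in both slots simultaneously leads to the contradictory requirement $p_2\geq p_1\geq q_2>p_2$), so the swapped Journ\'e argument does not close. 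Your fallback heuristic of ``applying the one-parameter Janson machinery in $x_2$ for each fixed $x_1$ and then integrating in $x_1$'' also fails at the outset: a genuine bi-parameter operator does not slice into a family of one-parameter operators acting in $x_2$ for fixed $x_1$, since the full kernel $K((x_1,x_2),(y_1,y_2))$ couples the two variables.

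What the paper actually does for this converse is qualitatively different and considerably heavier. It invokes Martikainen's bi-parameter dyadic representation theorem (Theorem~\ref{thm:birep}) to reduce to estimating $[b,S^{i,j}]$ for each class of model operator (shifts, partial paraproducts, full paraproducts) with polynomial dependence on the complexity $(i,j)$, applies the commutator decomposition from~\cite{LMV:Bloom} in the $x_2$-slot only, and then closes the estimates using the fractional paraproduct bound of Proposition~\ref{prop:frac:para}, the fractional Fefferman--Stein inequality (Lemma~\ref{lem:frac:FS}), the mixed-norm square-function equivalences (Lemma~\ref{lem:bound:sf}), the bi-parameter Fefferman--Stein inequality (Lemma~\ref{lem:FSbp}), and $H^1$--$\BMO$ duality (Lemma~\ref{lem:H1-BMO}). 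The author explicitly flags the asymmetry in the introduction to Section~\ref{section:alphah�ldercases1}, noting it is ``maybe surprising'' that the representation theorem is needed here even though the corresponding lower bound looks simple. Your proposal reproduces the lower bound correctly but does not supply any of this machinery for the converse, and the shortcut you offer in its place does not withstand scrutiny.
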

\begin{proof} The proof for the first part of the claim is completely symmetric with proof in the previous case.
	The converse direction is proved in Section \ref{sect:last}, see Proposition \ref{prop:c1a2}.
\end{proof}

Recapping, in all the above cases where we concluded the function $b$ to be a constant, we have the corresponding upper bounds as stated in Theorem \ref{thm:main} (i.e. $b=constant$ implies that $[b,T]= 0$ which implies that $N_{p,q} = 0$) and hence have constituted a full characterization of the boundedness of $[b, T],$ in these cases.  Both upper bounds for the cases where we concluded the function $b$ to be constant in one and have the Hölder continuity criterion in the other variable are lengthier and will be presented later in section \ref{section:alphahöldercases1} and \ref{sect:last}.

\subsection{The case $p_1=q_1$ and $p_2>q_2$}\label{sect:super}  We first recall some basic background.
A dyadic grid on $\R^d$ is a collection $\calD = \calD(\R^d)$ of cubes with side-lengths in the powers of two such that:
\begin{enumerate}
	\item for each $k\in\Z$ the collection $\big\{ Q\in\calD: \ell(Q) = 2^k\big\}$ is a disjoint cover of $\R^d,$
	\item for $Q,P\in\calD$ there holds that $Q\cap P\in \big\{Q,P,\emptyset\big\}.$
\end{enumerate}

Given a cube $Q,$ we let $\calD(Q)$ denote the system of dyadic cubes inside $Q$ that is attained from iteratively bisecting the sides of $Q;$
we use sparse collections made up of elements of $\calD(Q).$ 
A collection of sets $\mathscr{S}$ is said to be $\gamma$-sparse, if each $Q\in\mathscr{S}$ has a major subset $E_Q$ such that $\abs{E_Q}>\gamma\abs{Q}$ and these sets $E_Q$ are pairwise disjoint.

The stopping time family inside a fixed cube $Q_0$ is given by the following algorithm.
For a given cube $Q\in\calD$ we denote
\[
S(f;Q) = \{ P \in \calD, P \subset Q \mbox{ is maximal with }\ave{\abs{f}}_P > 2\ave{\abs{f}}_{Q} \}
\]
and let
$$\mathscr{S} = \bigcup_k\mathscr{S}_k,\qquad \mathscr{S}_{k+1} = \bigcup_{P\in \mathscr{S}_k}S(f;P),\qquad \mathscr{S}_0 = \{Q_0\}.$$

For a given collection $\mathscr{S}\subset\calD$ of dyadic cubes and for each $Q\in\mathscr{S}$ we let $\ch_{\mathscr{S}}(Q)$ consist of the maximal cubes $P\in\mathscr{S}$ such that $P\subsetneq Q.$ For a given cube $P\in\mathscr{S}$ we denote $
E_P = P\setminus \cup_{Q\in\ch_{\mathscr{S}}P}Q$, and for each $P\in\calD$ we let  $\Pi P := \Pi_{\mathscr{S}}P$ denote the minimal cube $Q$ in $\mathscr{S}$ such that $P\subset Q$ (on the condition that it exists). With this notation then, 
\[
\ch_{\mathscr{S}}(P) = \{Q\in\mathscr{S}: Q\subsetneq P,\quad \Pi Q = P\}.
\]
A variant of the following lemma is contained in \cite{HyLpLq}.

\begin{lem}\label{lem:prstop} Fix a cube $Q$ and let $f$ be a bounded function of zero mean supported on $Q.$ Then, there exists a sparse collection $\mathscr{S} = \cup_{k=1}^N\mathscr{S}_k \subset \calD_Q$ such that 
	\[
	f = \sum_{k=0}^N\sum_{P\in\mathscr{S}_k}f_P,\qquad f_P = \sum_{\Pi_{\mathscr{S}}Q = P}\Delta_Qf,
	\]
	 where the number $N$ is finite and depends only on $\Norm{f}{L^{\infty}(Q)},$ and moreover, there holds that
	\begin{enumerate}
		\item $\int f_P = 0$, for all $P\in\mathscr{S},$
		\item  $ \sum_{k=0}^N\sum_{P\in\mathscr{S}_k}\Norm{f_P}{\infty}^s1_P \lesssim_s (\mathsf{M}f)^s$ for all $s>0.$
	\end{enumerate}
\end{lem}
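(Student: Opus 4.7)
The plan is to take $\mathscr{S}$ to be exactly the iterative stopping family $\mathscr{S} = \bigcup_k \mathscr{S}_k$ described just before the lemma (with $\mathscr{S}_0 = \{Q\}$), and to verify four things: (i) $\mathscr{S}$ is $1/2$-sparse, (ii) the depth $N$ is finite, (iii) the regrouped martingale expansion of $f$ equals $\sum_{P\in\mathscr{S}}f_P$, and (iv) the pointwise sum $\sum_P \|f_P\|_\infty^s 1_P$ is controlled by $(\mathsf{M}f)^s$. Sparseness is immediate from the dyadic weak-$(1,1)$ bound: the cubes in $\ch_{\mathscr{S}}(P)$ are contained in $\{x\in P : M_{\calD(P)}(1_P f)(x) > 2\langle|f|\rangle_P\}$, a set of measure at most $|P|/2$, so $|E_P|\geq |P|/2$. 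The depth is finite because every $P\in\mathscr{S}_k$ satisfies $\langle|f|\rangle_P > 2^k\langle|f|\rangle_Q$, and these averages cannot exceed $\|f\|_{L^\infty(Q)}$.

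For the decomposition, since $f$ has zero mean and is supported on $Q$, its dyadic martingale expansion reads $f = \sum_{R\in\calD(Q)}\Delta_R f$. Regrouping each summand by its minimal $\mathscr{S}$-ancestor $\Pi_{\mathscr{S}} R$ gives $f = \sum_{P\in\mathscr{S}}f_P$ with $f_P = \sum_{\Pi_{\mathscr{S}} R = P}\Delta_R f$. Each $f_P$ is supported in $P$ and has vanishing integral because every $\Delta_R f$ entering the sum is supported in $R\subseteq P$ and integrates to zero; this is (1).

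The key step for (2) is the pointwise bound $\|f_P\|_\infty \lesssim \langle|f|\rangle_P$. Telescoping the martingale differences along the dyadic chain $P = R_0\supset R_1\supset\cdots$ containing a point $x\in P$ shows that if $x\in R_m\in\ch_{\mathscr{S}}(P)$, then $f_P$ is the constant $\langle f\rangle_{R_m} - \langle f\rangle_P$ on $R_m$; the stopping maximality of $R_m$ bounds its dyadic parent by $\langle|f|\rangle_{\widehat R_m}\leq 2\langle|f|\rangle_P$, so $\langle|f|\rangle_{R_m}\leq 2^{d+1}\langle|f|\rangle_P$ and the constant is of order $\langle|f|\rangle_P$. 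On $E_P$ the chain never meets a descendant stopping cube, so every $\langle|f|\rangle_{R_k}\leq 2\langle|f|\rangle_P$ along the chain; Lebesgue differentiation then gives $|f(x)|\leq 2\langle|f|\rangle_P$ a.e.\ on $E_P$, whence $|f_P(x)| = |f(x) - \langle f\rangle_P|\leq 3\langle|f|\rangle_P$ there.

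To conclude (2), observe that at any $x\in Q$ the set $\{P\in\mathscr{S} : x\in P\}$ is a nested chain $Q = P_0\supsetneq P_1\supsetneq\cdots\supsetneq P_{K(x)}$ with $\langle|f|\rangle_{P_{j+1}} > 2\langle|f|\rangle_{P_j}$, so the resulting geometric series gives
\[
\sum_{P\in\mathscr{S}}\|f_P\|_\infty^s 1_P(x) \lesssim \sum_{j=0}^{K(x)}\langle|f|\rangle_{P_j}^s \lesssim_s \langle|f|\rangle_{P_{K(x)}}^s \leq (\mathsf{M}f(x))^s.
\]
The main obstacle I expect is the uniform bound $\|f_P\|_\infty\lesssim \langle|f|\rangle_P$ on stopping children of $P$: it requires the dyadic-parent trick to convert the strict stopping overshoot into a bounded one, and the $E_P$ part needs Lebesgue differentiation combined with the stopping criterion propagating down the chain. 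Once these two ingredients are in hand, the rest is standard geometric-series bookkeeping.
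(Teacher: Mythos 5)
Your proof is correct. The paper itself gives no proof of this lemma (it only remarks that a variant is contained in \cite{HyLpLq}), and what you have written is precisely the standard stopping-time argument that the citation is standing in for: sparseness from the weak $(1,1)$ bound for the dyadic maximal function, finiteness of the depth from the geometric growth of the stopping averages against the $L^\infty$ cap, the telescoping identity $f_P=\ave{f}_{P'}-\ave{f}_P$ on stopping children together with the dyadic-parent trick and Lebesgue differentiation on $E_P$ to get $\Norm{f_P}{\infty}\lesssim_d\ave{\abs{f}}_P$, and the geometric series along the chain $\{P\in\mathscr{S}:x\in P\}$ for item (2). One cosmetic remark: your finiteness argument actually shows $N\lesssim\log_2\big(\Norm{f}{L^\infty(Q)}/\ave{\abs{f}}_Q\big)$, so $N$ depends on the ratio rather than on $\Norm{f}{L^\infty(Q)}$ alone; this is an imprecision in the lemma's statement, not in your argument, and is harmless in the application where $f$ is normalized.
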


In the remaining lower bounds we use the off-support norm given in the following
\begin{defn}\label{norm:2} We let
\allowdisplaybreaks \begin{align*}
\mathcal{O}_{p,q}^{\Sigma,A}(b;K) =  \sup \frac{\sum_{i=1}^N  
	\Big| \iint_{\R^{d}\times \R^d} (b(x)-b(y))
	K(x,y)f_i(y)g_i(x) \ud y \ud x \Big|}{
	\Norm{\sum_{i=1}^N\Norm{f_i}{\infty}1_{R_i}}{L^{p_1}_{x_1}L^{p_2}_{x_2}}
	\Norm{\sum_{i=1}^N\Norm{g_i}{\infty}1_{\wt{R}_i}}{L^{q_1'}_{x_1}L^{q_2'}_{x_2}}
},
\end{align*}
where the supremum is taken over rectangles $R_i = I_i \times J_i$ and $\widetilde{R}_i = \widetilde{I_i}\times\widetilde{J_i}$ with
$$
\ell(L_i) = \ell(\widetilde{L_i}) \qquad \textup{and} \qquad
\dist (L_i, \widetilde{L}_i)\sim A\ell(L_i) \qquad \textup{for} \qquad L = I,J
$$
and over functions $f_i\in L^{\infty}(R_i),g_i \in L^{\infty}(\widetilde{R}_i)$, $i=1,\ldots, N$.
\end{defn}
\begin{rem}
		 Again, we will suppress the superscript $A$ from $\mathcal{O}_{p,q}^{\Sigma,A}$ and just write $\mathcal{O}_{p,q}^{\Sigma}.$
	Using that for linear operators $U$ there holds
	$$
	\sum_{i=1}^N \langle Uf_i, g_i \rangle = \E \Big\langle U\Big( \sum_{i=1}^N \eps_i f_i \Big),  \sum_{j=1}^N \eps_j g_j \Big\rangle,
	$$
	for Rademacher random signs $\varepsilon_i$,
	it follows by Hölder's inequality that $$\mathcal{O}_{p,q}^{\Sigma}(b;K) \le \Norm{[b,T]}{L^{p_1}_{x_1}L^{p_2}_{x_2} \to L^{q_1}_{x_1}L^{q_2}_{x_2}}.$$ 
	Consequently, this is a reasonable off-support constant.
\end{rem}

\begin{prop}\label{prop:infty1r2} Let $p_1=q_1,$ $q_2<p_2$ and set $1/q_2=1/r_2+1/p_2,$ and assume that  $b \in L^1_{\loc,x_1}L^{r_2}_{\loc,x_2}.$
Then, there holds that 
	\begin{align}\label{smurffi}
	\inf_{c\in\C}\Norm{b-c}{L^{\infty}_{x_1}L^{r_2}_{x_2}} \lesssim \mathcal{O}_{p,q}^{\Sigma}(b;K) \leq \Norm{[b,T]}{L^{p_1}_{x_1}L^{p_2}_{x_2}\to L^{p_1}_{x_1}L^{q_2}_{x_2}} \lesssim \inf_{c\in\C}\Norm{b-c}{L^{\infty}_{x_1}L^{r_2}_{x_2}}.
	\end{align}
\end{prop}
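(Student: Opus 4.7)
The middle inequality is exactly the content of the remark following Definition \ref{norm:2}, so I focus on the two outer estimates.

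\textbf{Upper bound.} For any $c\in\C$, using $[b,T]=[b-c,T]$, write $[b-c,T]f = (b-c)Tf - T\bigl((b-c)f\bigr)$. H\"older's inequality in the $x_2$-slot with $1/q_2 = 1/r_2 + 1/p_2$ gives, for any auxiliary $h$,
\[
\|(b-c)(x_1,\cdot)\,h(x_1,\cdot)\|_{L^{q_2}_{x_2}} \leq \|(b-c)(x_1,\cdot)\|_{L^{r_2}_{x_2}}\|h(x_1,\cdot)\|_{L^{p_2}_{x_2}}.
\]
Taking $L^{p_1}_{x_1} = L^{q_1}_{x_1}$ and then invoking mixed-norm boundedness of the bi-parameter CZO $T$ on $L^{p_1}_{x_1}L^{p_2}_{x_2}$ and on $L^{p_1}_{x_1}L^{q_2}_{x_2}$ closes both pieces; infimizing over $c$ completes the bound.

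\textbf{Lower bound, the awf/sparse step.} For a cube $I_0\subset \R^{d_1}$ introduce the slice-average $b_{I_0}(x_2) := \fint_{I_0} b(x_1,x_2)\,\mathrm{d} x_1$. By $L^{r_2}$-duality it suffices to bound $|\langle b_{I_0},f\rangle|$ by $C\,\mathcal{O}^\Sigma_{p,q}(b;K)\|f\|_{L^{r_2'}}$ uniformly in $I_0$, for every bounded compactly supported $f\in L^{r_2'}(\R^{d_2})$ of zero mean. Apply Lemma \ref{lem:prstop} to $f$, obtaining a sparse decomposition $f = \sum_{P\in\mathscr{S}}f_P$ with $\int f_P = 0$ and $\supp f_P\subset P\subset\R^{d_2}$. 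Each $1_{I_0}\otimes f_P$ is then a zero-mean function on the rectangle $R_P := I_0\times P$, and Proposition \ref{wf:2par} decomposes it as commutator pairings $h_{i,P}Tg_{i,P} - g_{i,P}T^*h_{i,P}$ plus a small iteration error $\tilde F_P$ controlled by $\omega(1/A)\langle|f_P|\rangle_{R_P}1_{R_P}$. Summing over $P$, pairing with $b$, invoking the $\Sigma$-form off-support norm on the main terms, and absorbing the error for $A$ large (as in the proof of Proposition \ref{prop:oscform}) reduces the estimate to controlling the test-function denominator in $\mathcal{O}^\Sigma_{p,q}$. The $x_1$-slot of both test-function norms collapses to $|I_0|^{1/p_1+1/q_1'} = |I_0|$ because $p_1 = q_1$; the $x_2$-slot is a product of a sparse $L^{p_2}$ norm of $\sum_P\|f_P\|_\infty 1_P$ and a sparse $L^{q_2'}$ norm of $\sum_P\|g_P\|_\infty 1_{\wt P}$, which by Lemma \ref{lem:prstop}(2), H\"older, and the identity $1/p_2 + 1/q_2' = 1/r_2'$ (equivalent to $1/q_2 = 1/r_2 + 1/p_2$) is dominated by $\|\mathsf{M}f\|_{L^{r_2'}} \lesssim \|f\|_{L^{r_2'}}$. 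Dividing through by $|I_0|$ yields the slice-average bound $\inf_c\|b_{I_0} - c\|_{L^{r_2}} \lesssim \mathcal{O}^\Sigma_{p,q}(b;K)$ uniformly in $I_0$.

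\textbf{Main obstacle: single constant across slices.} The step above produces, for each $I_0$, its own optimal $c_{I_0}$, whereas the right-hand side of \eqref{smurffi} demands a single $c$ that works in $L^\infty_{x_1}L^{r_2}_{x_2}$. My plan to reconcile this is: fix a reference cube $I^*$ with optimal constant $c^*$, and rerun the same awf/sparse argument on the bi-parameter function $\bigl(1_{I_0}/|I_0|-1_{I^*}/|I^*|\bigr)\otimes f$ (which has zero mean in both slots whenever $I_0\subset I^*$) to obtain the uniform bound $\|b_{I_0}-c^*\|_{L^{r_2}} \lesssim \mathcal{O}^\Sigma_{p,q}(b;K)$ for all $I_0\subset I^*$. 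Lebesgue differentiation in $L^{r_2}$ then passes this to a pointwise estimate $\|b(x_1,\cdot)-c^*\|_{L^{r_2}}\lesssim \mathcal{O}^\Sigma$ for a.e.\ $x_1\in I^*$; a translation/exhaustion of $I^*$ through an increasing sequence of cubes finally removes the dependence on $I^*$ by arguing that the differences $c^*_{I^*_k}-c^*_{I^*_{k+1}}$ are controlled and hence admit a limit. This alignment of normalizing constants across $x_1$-slices, which is where the bi-parameter non-degeneracy of $K$ really has to couple the two parameters, is the most delicate part of the argument.
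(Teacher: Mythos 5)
Your proposal follows essentially the same route as the paper for all three inequalities: the middle bound via the Rademacher/H\"older remark after Definition \ref{norm:2}, the upper bound by H\"older in the $x_2$-slot with $1/q_2=1/r_2+1/p_2$ together with the mixed-norm boundedness of $T$, and the lower bound by dualizing the $x_2$-slice against zero-mean $f\in L^{r_2'}$, running Lemma \ref{lem:prstop} to get a sparse decomposition $f=\sum_P f_P$, applying Proposition \ref{wf:2par} to each $1_{I_0}\otimes f_P$ on $I_0\times P$, normalizing the bilinear pairings so that the two test-function norms in $\mathcal{O}^{\Sigma}_{p,q}$ are controlled by powers of $\mathsf{M}f$ via the exponent identity $1/p_2+1/q_2'=1/r_2'$, collapsing the $x_1$-slot to $|I_0|$ since $p_1=q_1$, absorbing the $\omega(1/A)$ error, and finishing with Lebesgue differentiation in $x_1$. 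The one place you diverge is the treatment of the normalizing constant: you build extra machinery (a reference cube $I^*$, the test function $(1_{I_0}/|I_0|-1_{I^*}/|I^*|)\otimes f$, and an exhaustion argument to align the slice-wise constants), whereas the paper sidesteps the alignment entirely by fixing a single $c\in\C$ at the outset, writing $\wt b=b-c$, and running the whole chain for that fixed $c$ -- the zero-mean test functions only ever see $b-c$ because the commutator annihilates constants, and the error term is absorbed against $\|\wt b(x_1,\cdot)\|_{L^{r_2}(J)}$ for that same $c$, so the final estimate is already stated for one global constant. Your extra step is not wrong in spirit (and it is honest about the subtlety that the zero-mean duality a priori only yields a distance to constants per slice), but it is heavier than what is needed; the paper's device of subtracting $c$ before dualizing is the cleaner resolution and is worth adopting.
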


\begin{proof} Let $c\in\C$ be a constant and denote $\wt{b}(x_1,x_2) = b(x_1,x_2)-c.$
Then, let $f:\R^{d_2}\to \C$ be such that
\begin{align}\label{hugo}
 1_Jf = f,\qquad	\int f = 0,\qquad\Norm{f}{L^{r_2'}(\R^{d_2})} \leq 1.
\end{align}
Then, according to Lemma \ref{lem:prstop}, we let $\mathscr{S}^2$ be the sparse collection of cubes inside $J$ with respect to the function $f$ and with $R=I\times J$ write
\[
	\int_R \wt{b}f=  \int  \wt{b}\cdot 1_I\otimes\Big(\sum_{k=0}^N\sum_{P\in \mathscr{S}_k^2}  f_P\Big) =  \sum_{k=0}^N\sum_{P\in \mathscr{S}_k^2}\int \wt{b}\cdot 1_I\otimes f_P.
\] 
The last step follows from that the left-hand side is integrable and that $\sum_{P\in \mathscr{S}_k^2}\wt{b}\cdot 1_I\otimes  f_P$ are disjointly supported for each fixed $k.$
Then, as the functions $1_I\otimes f_P$ satisfy the assumptions of Proposition \ref{wf:2par} on the cubes $I\times P$ we write
	\allowdisplaybreaks \begin{align*}
	 \int \wt{b}\cdot 1_I\otimes f_P  =  \big\langle [b,T]g_{\wt{P}},h_P \big\rangle+  \big\langle [b,T]h_{\wt{P}},g_P \big\rangle + \int \wt{b}\wt{f}_P,
	\end{align*}
	where in line with Proposition \ref{wf:2par} we notate $g_1 = g_{\wt{P}},$ $g_2 = g_P,$ $h_1 = h_P,$ $h_2 = h_{\wt{P}},$ and where we use that the commutator annihilates constants to change $\wt{b}$ to $b$ inside the commutator.
Consequently,
\begin{equation}\label{line1}
	\begin{split}
		\Babs{	\int_R \wt{b}f} &= \Babs{ \sum_{k=0}^N\sum_{P\in \mathscr{S}_k^2}  \big\langle [b,T]g_{\wt{P}},h_P \big\rangle+  \big\langle [b,T]h_{\wt{P}},g_P \big\rangle + \int \wt{b}\wt{f}_P } \\
	&\leq \sum_{k=0}^N\sum_{P\in \mathscr{S}_k^2}\babs{ \big\langle [b,T]g_{\wt{P}},h_P \big\rangle} +  \sum_{k=0}^N\sum_{P\in \mathscr{S}_k^2}\babs{\big\langle [b,T]h_{\wt{P}},g_P \big\rangle} +\abs{ \int \wt{b}\wt{f}_{\Sigma}},
	\end{split}
\end{equation}
where we denote $\wt{f}_{\Sigma} = \sum_{k=0}^N\sum_{P\in \mathscr{S}_k^2} \wt{f}_{P}.$

Let us then focus on the first sum on the right-hand side.
The collection $ \mathscr{S}_k^2$ is not necessarily finite and the off-support norm \ref{norm:2} only controls finite sums. Hence we write
\[
\mathscr{S}_{k,j}^2 = \mathscr{S}_k^2\cap \mathscr{S}^2_j,\qquad  \mathscr{S}^2 = \bigcup_{j=1}^{\infty}\mathscr{S}_j^2,\qquad \mathscr{S}_{j}^2 \subset \mathscr{S}_{j+1}^2
\]
for some finite collections $\mathscr{S}_{j}^2 \subset\mathscr{S}^2$ and
\begin{align*}
\sum_{k=0}^N\sum_{P\in \mathscr{S}_k^2}\babs{ \big\langle [b,T]g_{\wt{P}},h_P \big\rangle} = \lim_{j\to\infty}\sum_{k=0}^N\sum_{P\in\mathscr{S}_{k,j}^2}\babs{ \big\langle [b,T]g_{\wt{P}},h_P \big\rangle} 
\end{align*}

Notice that the term $ \big\langle [b,T]g_{\wt{P}},h_P \big\rangle$ is bilinear and hence that we may replace $g_{\wt{P}}$ with $\alpha_P g_{\wt{P}}$ and $h_P$ with $\alpha^{-1}_Ph_P$, for any $\alpha_P\not=0.$
We choose $\alpha_P = \Norm{f_P}{\infty}^{\frac{r_2'}{p_2}}$ and estimate
\begin{equation}\label{disp1}
	\begin{split}
	&\sum_{k=0}^N\sum_{P\in\mathscr{S}_{k,j}^2}\babs{ \big\langle [b,T]g_{\wt{P}},h_P \big\rangle} =
	\sum_{k=0}^N\sum_{P\in\mathscr{S}_{k,j}^2}\Babs{ \Big\langle \left[   b,T\right] \Norm{f_P}{\infty}^{\frac{r_2'}{p_2}}1_{\wt{I\times P}} , \Norm{f_P}{\infty}^{-\frac{r_2'}{p_2}}h_P\Big\rangle } \\
	&\lesssim  \mathcal{O}_{p,q}^{\Sigma}(b;K)\BNorm{\sum_{k=0}^N\sum_{P\in \mathscr{S}_{k,j}^2} \Norm{f_P}{\infty}^{\frac{r_2'}{p_2}}1_{\wt{I\times P}}}{ L^{p_1}_{x_1}L^{p_2}_{x_2}}  \BNorm{\sum_{k=0}^N\sum_{P\in \mathscr{S}_{k,j}^2}\Norm{f_P}{\infty}^{\frac{r_2'}{q_2'}}1_{I\times P}}{ L^{p_1'}_{x_1}L^{q_2'}_{x_2}},
	\end{split}
\end{equation}
in the last step we used the Definition \ref{norm:2} of $\mathcal{O}^{\Sigma}_{p,q},$ the estimate $\Norm{h_P^1}{\infty} \lesssim \Norm{f_P}{\infty}$ and 
the identity  $1 - \frac{r_2'}{p_2} = \frac{r_2'}{q_2'}.$ 
By Lemma \ref{lem:prstop} we have 
$$
\sum_{k=0}^N\sum_{P\in \mathscr{S}_{k,j}^2} \bNorm{f_P}{\infty}^{\frac{r_2'}{q_2'}}1_{I\times P} \lesssim 1_I\otimes (\mathsf{M}f)^{r_2'/q_2'}
$$ and this is enough to control the right-most term of the display \eqref{disp1}. To obtain the similar estimate for the other term we argue as follows. With the rectangle $I\times P$ fixed, write the reflected rectangle as $\wt{I\times P} = \wt{I}_{I\times P}\times\wt{P}_{I\times P}.$  Then, by Proposition \ref{bootstrap} we have
\[
\dist(\wt{I}_{I\times P},I) \sim \diam(I),\qquad \dist(\wt{P}_{I\times P},P) \sim \diam(P),
\] 
and hence, there exists some absolute bounded positive constant $C$ such that  $C\wt{I\times P} \supset I\times P \supset I\times E_P.$
This shows that the collection $\big\{ C\wt{I\times P}: P\in\mathscr{S}\big\}$ of rectangles is sparse with the major subsets $I\times E_P.$ Hence, we have
\allowdisplaybreaks \begin{align*}
&\BNorm{\sum_{k=0}^N\sum_{P\in \mathscr{S}^2} \bNorm{f_P}{\infty}^{\frac{r_2'}{p_2}}1_{\wt{I\times P}}}{L^{p_1}_{x_1}L^{p_2}_{x_2}} \leq \BNorm{\sum_{k=0}^N\sum_{P\in \mathscr{S}^2} \bNorm{f_P}{\infty}^{\frac{r_2'}{p_2}}1_{C\wt{I\times P}}}{L^{p_1}_{x_1}L^{p_2}_{x_2}} \\
&\overset{*}{\lesssim} \BNorm{\sum_{k=0}^N\sum_{P\in \mathscr{S}^2} \bNorm{f_P}{\infty}^{\frac{r_2'}{p_2}}1_{I\times E_P}}{L^{p_1}_{x_1}L^{p_2}_{x_2}} \leq \BNorm{\sum_{k=0}^N\sum_{P\in \mathscr{S}^2} \bNorm{f_P}{\infty}^{\frac{r_2'}{p_2}}1_{I\times P}}{L^{p_1}_{x_1}L^{p_2}_{x_2}},
\end{align*}
where the estimate marked with $*$ can be seen by dualizing and using sparseness, indeed, we have with any function such that $\Norm{ g}{L^{p_1'}_{x_1}L^{p_2'}_{x_2}} \leq 1,$ with any constants $a_j,$ and with any sparse collection $\{R_j,E_{R_j}\}_j$ of rectangles, that
\begin{align*}
	\int \sum_{j} a_j 1_{R_j}g &= \sum_{j} a_j \ave{g}_{R_j}\abs{R_j} \lesssim \sum_{j} \abs{a_j} \ave{\abs{g}}_{R_j}\abs{E_{R_j}} \leq \int \mathsf{M_S}{g}\sum_{j}\abs{a_j}1_{E_{R_j}} \\
	&\leq  \Norm{ \sum_{j}\abs{a_j}1_{E_{R_j}}}{L^{p_1}_{x_1}L^{p_2}_{x_2}}  \Norm{ \mathsf{M_S}{g}}{L^{p_1'}_{x_1}L^{p_2'}_{x_2}} \lesssim \Norm{ \sum_{j}\abs{a_j}1_{E_{R_j}}}{L^{p_1}_{x_1}L^{p_2}_{x_2}},
\end{align*}
where $\mathsf{M_S}$ is the bi-parameter strong maximal function.  Hence, we have again reduced to the pointwise estimate $\sum_{k=0}^N\sum_{P\in \mathscr{S}^2} \bNorm{f_P}{\infty}^{\frac{r_2'}{p_2}}1_{I\times P} \lesssim 1_I\otimes (\mathsf{M}f)^{\frac{r_2'}{p_2}}$ true by Lemma \ref{lem:prstop}. The same estimates also holds for the other term on the line \eqref{line1}.
Putting the above together, we have now shown that
\begin{equation}\label{demonstrate1}
	\begin{split}
		&\sum_{k=0}^N\sum_{P\in \mathscr{S}_k^2}\babs{ \big\langle [b,T]g_{\wt{P}},h_P \big\rangle} +  \sum_{k=0}^N\sum_{P\in \mathscr{S}_k^2}\babs{\big\langle [b,T]h_{\wt{P}},g_P \big\rangle}\\ 
		 &\lesssim  \mathcal{O}_{p,q}^{\Sigma}(b;K)\bNorm{ 1_I\otimes (\mathsf{M}f)^{\frac{r_2'}{p_2}}}{ L^{p_1}_{x_1}L^{p_2}_{x_2}}  \bNorm{1_I\otimes (\mathsf{M}f)^{\frac{r_2'}{q_2'}}}{ L^{p_1'}_{x_1}L^{q_2'}_{x_2}} \\
		&\lesssim   \mathcal{O}_{p,q}^{\Sigma}(b;K)\abs{I}^{1/p_1+1/q_1'} = \mathcal{O}_{p,q}^{\Sigma}(b;K)\abs{I},
	\end{split}
\end{equation}
where we used the boundedness of the maximal function and that $\Norm{f}{L^{r_2'}(\R^{d_2})} \leq 1.$ 
The estimate \eqref{demonstrate1} is uniform in $j$ and hence from \eqref{line1} we find that 
\allowdisplaybreaks \begin{align}\label{estimate1}
\abs{\int_R \wt{b}f} \lesssim  \mathcal{O}_{p,q}^{\Sigma}(b;K)\abs{I}+  \abs{\int_R \wt{b}\wt{f}_{\Sigma}}.
\end{align}
To have control over the error term, we use Proposition \ref{wf:2par} and Lemma \ref{lem:prstop} to find
\begin{align}\label{errorcontrol}
	\abs{\wt{f}_{\Sigma}} &\leq \sum_{k=0}^N\sum_{P\in \mathscr{S}_k^2}\abs{\wt{f}_P} \lesssim \omega(\frac{1}{A}) 1_I\otimes\sum_{k=0}^N\sum_{P\in \mathscr{S}_k^2}\Norm{f_P}{\infty}1_P \lesssim  \omega(\frac{1}{A})1_I\otimes \mathsf{M}f.
\end{align}
%Especially the function $\abs{\wt{f}_{\Sigma}}$ satisfies the conditions \eqref{hugo} with the additional decay $\omega(1/A)$
By \eqref{errorcontrol} and Hölder's inequality we have 
\begin{align*}
	\abs{\int_R \wt{b}\wt{f}_{\Sigma}} &\leq \int_I \Norm{\tilde{b}}{L^{r_2}_{x_2}(J)}\Norm{\wt{f}_{\Sigma}}{L^{r_2'}_{x_2}(J)} \lesssim \int_I \Norm{\tilde{b}}{L^{r_2}_{x_2}(J)} \omega(\frac{1}{A})\Norm{\mathsf{M}f}{L^{r_2'}_{x_2}(J)} \lesssim  \omega(\frac{1}{A})\int_I \Norm{\tilde{b}}{L^{r_2}_{x_2}(J)}
\end{align*}
and hence continuing from the line \eqref{estimate1} after dividing by $\abs{I}$ that
\begin{align*}\label{supover}
\abs{\fint_I\int_J \wt{b}f} \lesssim  \mathcal{O}_{p,q}^{\Sigma}(b;K) +  \omega(\frac{1}{A})\fint_I \Norm{\tilde{b}}{L^{r_2}_{x_2}(J)}.
\end{align*}
Hence, by having $I\to\{x_1\},$ the Lebesgue differentiation theorem shows that
\[
\abs{\int_J\wt{b}(x_1,x_2)f(x_2)\ud x_2} \lesssim  \mathcal{O}_{p,q}^{\Sigma}(b;K) +  \omega(\frac{1}{A}) \Norm{\tilde{b}(x_1,x_2)}{L^{r_2}_{x_2}(J)}.
\]
Since $\wt{b}(x_1,\cdot) \in L^{r_2}(J)$ we have
\[
\sup_{\eqref{hugo}}\abs{ \int_{ J}\wt{b}(x_1,x_2)f(x_2)\ud x_2} = \Norm{\tilde{b}(x_1,x_2)}{L^{r_2}_{x_2}(J)},
\]
where the supremum is taken over all such $f$ as were considered on the line \eqref{hugo}. 
Consequently, we have shown that 
\[
\Norm{\tilde{b}(x_1,x_2)}{L^{r_2}_{x_2}(J)} \lesssim  \mathcal{O}_{p,q}^{\Sigma}(b;K) +  \omega(\frac{1}{A}) \Norm{\tilde{b}(x_1,x_2)}{L^{r_2}_{x_2}(J)}.
\]
The term shared on both sides of the estimate is finite almost everywhere and hence by absorbing the common term to the left-hand side we conclude with the left-most estimate of \eqref{smurffi}.

The estimate on the middle was already discussed earlier in section \ref{norm:2} and it remains to show the right-most estimate. As the commutator is unchanged modulo constants, we find that
\begin{align*}
	\Norm{[b,T]f}{ L^{p_1}_{x_1}L^{q_2}_{x_2}}  = 	\Norm{[b-c,T]f}{ L^{p_1}_{x_1}L^{q_2}_{x_2}} \leq 	\Norm{(b-c)Tf}{ L^{p_1}_{x_1}L^{q_2}_{x_2}} + 	\Norm{T(b-c)f}{ L^{p_1}_{x_1}L^{q_2}_{x_2}}. 
\end{align*}
From  here, by the mixed norm estimates of $T,$ it is enough to estimate
\begin{align*}
	\Norm{(b-c)f}{ L^{p_1}_{x_1}L^{q_2}_{x_2}} \leq \bNorm{\Norm{b-c}{L^{r_2}_{x_2}}\Norm{f}{L^{p_2}_{x_2}}}{L^{p_1}_{x_1}} \leq \Norm{b-c}{L^{\infty}_{x_1}L^{r_2}_{x_2}}\Norm{f}{L^{p_1}_{x_1}L^{p_2}_{x_2}},
\end{align*}
where we used that $1/q_2 = 1/r_2+1/p_2.$
Taking the infimum over all $c\in\C$ shows the claim.
\end{proof}

\begin{prop}\label{prop:infty2r1} Let $p_2=q_2$ and $q_1<p_1$ and assume that $b\in L^1_{\loc,x_2}L^{r_1}_{\loc,x_1}.$ Then, there holds that 
	$$
	\inf_{c\in\C}\Norm{b-c}{L^{\infty}_{x_2} L^{r_1}_{x_1}} \lesssim \mathcal{O}_{p,q}^{\Sigma}(b;K) \leq 	\Norm{[b,T]}{L^{p_1}_{x_1}L^{p_2}_{x_2}\to L^{q_1}_{x_1}L^{p_2}_{x_2}} \lesssim \inf_{c\in\C}\Norm{b-c}{L^{r_1}_{x_1}L^{\infty}_{x_2} }.
	$$	
\end{prop}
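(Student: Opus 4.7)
The middle estimate $\mathcal{O}_{p,q}^{\Sigma}(b;K)\leq \Norm{[b,T]}{L^{p_1}_{x_1}L^{p_2}_{x_2}\to L^{q_1}_{x_1}L^{p_2}_{x_2}}$ is already contained in the Rademacher/H\"older remark after Definition \ref{norm:2}. The right-most upper bound will be handled by a direct triangle inequality argument, mirroring the end of Proposition \ref{prop:infty1r2}: writing $[b,T] = [b-c,T]$, splitting as $(b-c)Tf - T((b-c)f)$, and applying H\"older's inequality using $1/q_1 = 1/r_1 + 1/p_1$ in the $x_1$-slot and $1/p_2 = 1/\infty + 1/p_2$ in the $x_2$-slot, combined with the fact that $T$ is bounded on $L^{p_1}_{x_1}L^{p_2}_{x_2}$ and on $L^{q_1}_{x_1}L^{p_2}_{x_2}$ (these mixed-norm bounds for bi-parameter CZOs follow e.g.\ from the representation theorem). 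Taking infimum over $c\in\C$ finishes.

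The main content is the lower bound, and the plan is to mirror Proposition \ref{prop:infty1r2} with the roles of the two parameters interchanged. Fix $c\in\C$ and write $\wt b = b-c$. Fix a cube $I\subset\R^{d_1}$ and a function $f\colon\R^{d_1}\to\C$ satisfying $1_I f = f$, $\int f = 0$, $\Norm{f}{L^{r_1'}(\R^{d_1})}\leq 1$. Apply Lemma \ref{lem:prstop} to $f$ to obtain a sparse collection $\mathscr{S}^1\subset\calD(I)$ and the decomposition $f = \sum_{k=0}^N\sum_{P\in\mathscr{S}_k^1} f_P$. For any cube $J\subset\R^{d_2}$ and any rectangle $R=I\times J$, write
\[
\int_R \wt b\,(f\otimes 1_J) = \sum_{k=0}^N\sum_{P\in\mathscr{S}_k^1}\int \wt b\,(f_P\otimes 1_J),
\]
and apply Proposition \ref{wf:2par} to each term on the rectangle $P\times J$ (using $[b,T] = [\wt b,T]$ to replace $\wt b$ by $b$ inside the commutator pairings). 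This produces two awf pairings $\langle [b,T]g_{\wt{P\times J}},h_{P\times J}\rangle$ and $\langle [b,T]h_{\wt{P\times J}},g_{P\times J}\rangle$, together with the error term $\int\wt b\,\wt f_\Sigma$ where $|\wt f_\Sigma|\lesssim\omega(1/A)\,1_I\otimes\mathsf{M}f$ by Lemma \ref{lem:prstop}(2).

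To bound the sum of awf pairings, exploit bilinearity to rescale by $\alpha_P=\Norm{f_P}{\infty}^{r_1'/p_1}$, and apply the off-support norm of Definition \ref{norm:2}. The key exponent identity is
\[
1-\frac{r_1'}{p_1} = \frac{r_1'}{q_1'},
\]
which follows directly from $1/q_1 = 1/r_1 + 1/p_1$. Combined with the sparse pointwise bound $\sum_P \Norm{f_P}{\infty}^s 1_P \lesssim (\mathsf{M}f)^s$ of Lemma \ref{lem:prstop}(2), and with the sparseness of the reflected family $\{C\wt{P\times J}\}_P$ (whose major subsets are $E_P\times J$, yielding the same bound via the duality/strong maximal function argument used in Proposition \ref{prop:infty1r2}), the two factor norms become
\[
\BNorm{1_J\otimes (\mathsf{M}f)^{r_1'/p_1}}{L^{p_1}_{x_1}L^{p_2}_{x_2}}\cdot\BNorm{1_J\otimes(\mathsf{M}f)^{r_1'/q_1'}}{L^{q_1'}_{x_1}L^{p_2'}_{x_2}}\lesssim |J|^{1/p_2}|J|^{1/p_2'} = |J|,
\]
using $\Norm{\mathsf{M}f}{L^{r_1'}_{x_1}}\lesssim\Norm{f}{L^{r_1'}_{x_1}}\leq 1$. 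Altogether, dividing by $|J|$,
\[
\Babs{\fint_J\int_I \wt b\,(f\otimes 1_J)}\lesssim \mathcal{O}_{p,q}^{\Sigma}(b;K) + \omega(1/A)\fint_J \Norm{\wt b(\cdot,x_2)}{L^{r_1}_{x_1}(I)}\ud x_2.
\]

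Now shrink $J\to\{x_2\}$ and invoke the Lebesgue differentiation theorem to pass to pointwise statements in $x_2$, then take supremum over admissible $f$, which equals $\Norm{\wt b(\cdot,x_2)}{L^{r_1}_{x_1}(I)}$ by $L^{r_1}$--$L^{r_1'}$ duality (restricted to mean-zero test functions, which causes no loss since we may subtract a constant in $x_1$). Finiteness of the common term on both sides (a.e.\ $x_2$) allows us to absorb the $\omega(1/A)$ contribution for $A$ large, yielding
\[
\Norm{\wt b(\cdot,x_2)}{L^{r_1}_{x_1}(I)}\lesssim \mathcal{O}_{p,q}^{\Sigma}(b;K).
\]
Taking supremum over $I\subset\R^{d_1}$ and then over $x_2$, followed by the infimum over $c\in\C$, gives the desired lower bound. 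The main technical point is the correct weighting of the awf pairings by $\Norm{f_P}{\infty}^{r_1'/p_1}$ and the bookkeeping of exponents via $1 - r_1'/p_1 = r_1'/q_1'$; the rest is a parameter-swap of Proposition \ref{prop:infty1r2}.
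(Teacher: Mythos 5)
Your proposal is correct and is essentially the paper's own argument: the paper's proof of this proposition consists of the single remark that the lower bound is ``completely symmetric'' with Proposition \ref{prop:infty1r2}, the middle estimate follows from H\"older/the Rademacher remark, and the upper bound from the invariance modulo constants plus mixed-norm bounds for $T$ and H\"older — which is precisely the parameter-swapped argument (sparse decomposition of $f$ in $x_1$, rescaling by $\Norm{f_P}{\infty}^{r_1'/p_1}$, the identity $1-r_1'/p_1=r_1'/q_1'$, and $J\to\{x_2\}$) that you have written out in detail.
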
 
\begin{proof} The left-most estimate is completely symmetric with the proof of Proposition \ref{prop:infty1r2} and the estimate on the middle is immediate by Hölder's inequality. The right-most estimate follows by the invariance of the commutator modulo additive constants, the mixed norm estimates of $T,$ and Hölders inequality.
\end{proof}

\subsection{The case $p_1>q_1$ and $p_2>q_2$} In this case, again, it follows immediately by Hölder's inequality, the invariance of the commutator modulo additive constants, and the mixed norm estimates of $T$, that
\[
\mathcal{O}^{\Sigma}_{p,q}(b;K)\leq \Norm{[b,T]}{L^{p_1}_{x_1}L^{p_2}_{x_2}\to L^{q_1}_{x_1}L^{q_2}_{x_2}} \leq  \inf_{c\in\C}\Norm{b-c}{L^{r_1}_{x_1}L^{r_2}_{x_2} }.
\] 
Then, we would like to prove a lower bound for $\mathcal{O}^{\Sigma}_{p,q}(b;K)$ that gets as close to $\inf_{c\in\C}\Norm{b-c}{L^{r_1}_{x_1}L^{r_2}_{x_2}}$ as possible. Let us first discuss the non-mixed case, where we have a full characterization.
\begin{prop}\label{prop:r1r2diag} Let $p_1=p_2 > q_1=q_2,$ define $1/r = 1/q_1-1/p_1,$ and assume that $b\in L^1_{\loc}.$ Then, there holds that
	$$
	\inf_{c\in\C}\Norm{b-c}{L^r(\R^{d})}\sim  \mathcal{O}^{\Sigma}_{p,q}(b;K) \sim \Norm{[b,T]}{L^p(\R^d)\to L^q(\R^d)}.
	$$
\end{prop}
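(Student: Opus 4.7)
The three assertions split as usual. The estimate $\|[b,T]\|_{L^p\to L^q}\lesssim \inf_c\|b-c\|_{L^r}$ follows from the identity $[b,T]f=(b-c)Tf-T((b-c)f)$, H\"older's inequality with $1/q=1/r+1/p$, and the $L^s$-boundedness of $T$ for every $s\in(1,\infty)$. The middle inequality $\mathcal{O}^{\Sigma}_{p,q}(b;K)\le \|[b,T]\|_{L^p\to L^q}$ is the randomisation observation of the remark following Definition \ref{norm:2}. The work lies in proving $\inf_c\|b-c\|_{L^r}\lesssim \mathcal{O}^{\Sigma}_{p,q}(b;K)$, a bi-parameter adaptation of the one-parameter argument of \cite{HyLpLq}.

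By duality, $\inf_c\|b-c\|_{L^r}=\sup|\langle b,f\rangle|$ over bounded, compactly supported $f$ with $\|f\|_{L^{r'}}\le 1$ and $\int f=0$, and by truncation we may assume $\supp f\subset R_0$ for a large rectangle $R_0$. The main technical input is a bi-parameter analogue of Lemma \ref{lem:prstop}: an iterated stopping-time construction in the two parameters yields a sparse family $\mathscr{S}$ of dyadic sub-rectangles of $R_0$ and a decomposition $f=\sum_{R\in\mathscr{S}}f_R$ with $\supp f_R\subset R$, $\int f_R=0$, and the Carleson bound $\sum_{R\in\mathscr{S}}\|f_R\|_\infty^s 1_R\lesssim_s(\mathsf{M}_S f)^s$ for every $s>0$, where $\mathsf{M}_S$ is the bi-parameter strong maximal function.

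For each $R\in\mathscr{S}$, Proposition \ref{wf:2par} decomposes
\[
f_R=[h_R^1 Tg_R^1-g_R^1T^*h_R^1]+[h_R^2T^*g_R^2-g_R^2Th_R^2]+\tilde f_R,
\]
with $g_R^1=1_{\wt R}$, $g_R^2=1_R$, and the size controls stated there. Pairing with $b$, using that $[b,T]$ annihilates constants to centre $b$, and choosing $A$ large to absorb the iteration errors (which, via $\int\tilde f_R=0$, H\"older, the Carleson bound with $s=r'$, and the usual bootstrapping, produce a harmless $\omega(1/A)\inf_c\|b-c\|_{L^r}\|f\|_{L^{r'}}$ contribution), we are left with the main sums. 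Bilinearity lets us rescale $g_R^1\mapsto\alpha_R g_R^1$ and $h_R^1\mapsto\alpha_R^{-1}h_R^1$ with $\alpha_R=\|f_R\|_\infty^{q'/(p+q')}$, and applying Definition \ref{norm:2} (with $L^{p_1}_{x_1}L^{p_2}_{x_2}=L^p$ and $L^{q_1'}_{x_1}L^{q_2'}_{x_2}=L^{q'}$) gives
\[
\sum_R|\langle[b,T]g_R^1,h_R^1\rangle|\lesssim \mathcal{O}^{\Sigma}_{p,q}(b;K)\BNorm{\sum_R\alpha_R 1_{\wt R}}{L^p}\BNorm{\sum_R\alpha_R^{-1}\|f_R\|_\infty 1_R}{L^{q'}}.
\]
As in the proof of Proposition \ref{prop:infty1r2}, sparseness propagates from $\{R\}$ to $\{\wt R\}$ through common major subsets; dualising and using $\mathsf{M}_S$ collapses the two norms to $(\sum_R\alpha_R^p|R|)^{1/p}$ and $(\sum_R\alpha_R^{-q'}\|f_R\|_\infty^{q'}|R|)^{1/q'}$ respectively. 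H\"older over $\mathscr{S}$ together with the identity $1/p+1/q'=1/r'$ (equivalent to $1/r=1/q-1/p$) yields the product $(\sum_R\|f_R\|_\infty^{r'}|R|)^{1/r'}\lesssim\|\mathsf{M}_S f\|_{L^{r'}}\lesssim\|f\|_{L^{r'}}\le 1$. The second main sum is symmetric, so combining everything gives $|\langle b,f\rangle|\lesssim\mathcal{O}^{\Sigma}_{p,q}(b;K)$, and taking the supremum over admissible $f$ closes the estimate.

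The principal obstacle is the bi-parameter sparse decomposition with simultaneously zero-mean pieces and the $\mathsf{M}_S$-based pointwise Carleson control: Lemma \ref{lem:prstop} is genuinely one-parameter, and iterating it while preserving the cancellation $\int f_R=0$ and the correct Carleson bound requires care. Everything else is the bi-parameter bookkeeping of Hyt\"onen's one-parameter argument, with Definition \ref{norm:2} tailor-made to absorb sparse summation through the mixed-norm normalisation.
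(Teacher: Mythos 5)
Your overall strategy (duality, a sparse mean-zero decomposition of the test function, awf on each piece, rescaling into $\mathcal{O}^{\Sigma}_{p,q}$, and the exponent bookkeeping $\alpha_R=\|f_R\|_\infty^{r'/p}$, $1/p+1/q'=1/r'$) is sound and the computational core matches what the paper needs. But the step you yourself flag as the ``main technical input'' --- a bi-parameter iterated stopping time producing a sparse family of \emph{rectangles} with zero-mean pieces and the Carleson bound $\sum_R\|f_R\|_\infty^{s}1_R\lesssim_s(\mathsf{M}_Sf)^s$ --- is asserted, not proved, and it is precisely the kind of rectangular sparse structure whose absence the paper cites as the reason the genuinely mixed case $p_1>q_1$, $p_2>q_2$ remains uncharacterized. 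If such a decomposition were available you would have resolved the open case, not just the diagonal one; as written, the proposal defers its hardest step. A second, smaller issue: absorbing the error contribution $\omega(1/A)\inf_c\|b-c\|_{L^r}$ into the left-hand side presupposes that this quantity is finite, which is not given a priori for $b\in L^1_{\loc}$; the absorption has to be run locally (as in Proposition \ref{prop:oscform}, where each single $|R|\osc(b;R)$ is finite) or via a truncation, not globally.

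Both issues disappear once you notice that in the diagonal case no bi-parameter decomposition is needed: $L^{p}_{x_1}L^{p}_{x_2}=L^p(\R^d)$, and every cube $Q\subset\R^d$ is a rectangle $I\times J$ with $\ell(I)=\ell(J)$, so the one-parameter Lemma \ref{lem:prstop} applied on $\R^d$ already produces mean-zero pieces supported on cubes to which Proposition \ref{wf:2par} and Definition \ref{norm:2} apply verbatim, with the ordinary maximal function in place of $\mathsf{M}_S$. The paper takes exactly this shortcut, but packaged differently: it quotes the sparse oscillatory characterization $\inf_c\|b-c\|_{L^r(\R^d)}\sim\sup_{\mathscr{S}}\sum_{Q\in\mathscr{S}}\lambda_Q|Q|\osc(b;Q)$ over sparse families of cubes of $\R^d$ (Proposition 3.2 of \cite{AHLMO}), which absorbs your duality, stopping-time and absorption steps into a single cited result, and then bounds the sparse oscillation sum by $\mathcal{O}^{\Sigma}_{p,q}(b;K)$ exactly as in the proof of Proposition \ref{prop:r1r2}. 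Your route, corrected to use cubes, is a self-contained unwinding of that citation; the paper's route is shorter and keeps the delicate finiteness/absorption issues inside the quoted characterization.
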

\begin{proof} 
	The following oscillatory characterization is recorded e.g. as Proposition 3.2. in \cite{AHLMO}. Let $r\in(1,\infty),$ then there holds that 
	\begin{align}\label{sparseosc}
	\inf_{c\in\C}\Norm{b-c}{L^r(\R^d)} \sim \sup_{\mathscr{S}}\Big\{ \sum_{Q\in\mathscr{S}}\lambda_Q\abs{Q}\osc(b,Q)\colon \mathscr{S} \mbox{ is } 1/2\mbox{-sparse}, \sum_{Q\in\mathscr{S}}\abs{Q}\lambda_Q^{r'}\leq 1\Big\},
	\end{align}
	where the sparse collections $\mathscr{S}$ consist of cubes of $\R^{d}.$
	Now, fix any sparse collection $\mathscr{S}$ as in the supremum. Then, identically as in the proof of Proposition \ref{prop:r1r2}, we can bound 
	$$
	\sum_{Q\in\mathscr{S}}\lambda_Q\abs{Q}\osc(b,Q) \lesssim \mathcal{O}^{\Sigma}_{p,q}(b;K).
	$$ The remaining bounds $\mathcal{O}^{\Sigma}_{p,q}(b;K) \lesssim \Norm{[b,T]}{L^p(\R^d)\to L^q(\R^d)} \lesssim \inf_{c\in\C}\Norm{b-c}{L^r(\R^d)}$ were already discussed above in the mixed case.
\end{proof}

In the mixed case we are unable to prove the desired lower bound and what we have is the following
\begin{prop}\label{prop:r1r2} Let $p_1>q_1,p_2>q_2$ and let $b\in L^1_{\loc}.$ Let $\mathscr{S}^i$ denote a  $1/2$-sparse collection on $\R^{d_i}$ with associated coefficients $\{\lambda_{I_i}\}$ such that $\sum_{I_i\in\mathscr{S}^i}\lambda_{I_i}^{r_i'}\abs{I_i} \leq 1.$
	Then,there holds that
	\allowdisplaybreaks \begin{align*}
	\sup_{\mathscr{S}^1,\mathscr{S}^2}\Big[  \sum_{I_1\in\mathscr{S}^1}\sum_{I_2\in\mathscr{S}^2} \lambda_{I_1}\lambda_{I_2}\abs{I_1}\abs{I_2}\osc(b,I_1\times I_2) \Big]  
	\lesssim \mathcal{O}^{\Sigma}_{p,q}(b;K).
	\end{align*}
\end{prop}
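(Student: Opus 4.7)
The plan is to combine the oscillation formula of Proposition \ref{prop:oscform} with the off-support norm of Definition \ref{norm:2}, exploiting bilinearity of the pairing and the tensor structure of the product weights $\lambda_{I_1}\lambda_{I_2}$. For each pair $(I_1,I_2) \in \mathscr{S}^1\times \mathscr{S}^2$ and $R = I_1\times I_2$, Proposition \ref{prop:oscform} produces functions $g_R^1 = 1_{\widetilde R}$, $g_R^2 = 1_R$, $\abs{h_R^1}\lesssim 1_R$, $\abs{h_R^2}\lesssim 1_{\widetilde R}$ satisfying
\[
\abs{R}\osc(b;R) \lesssim \babs{\langle [b,T]g_R^1, h_R^1\rangle} + \babs{\langle [b,T]h_R^2, g_R^2\rangle}.
\]
Symmetry of the off-support pairing in the two rectangle supports lets us freely regard the $\widetilde R$-sides as ``input'' and the $R$-sides as ``output'' when invoking Definition \ref{norm:2}.

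Next I would set $a_i := r_i'/p_i$; the identity $1/r_i' = 1/p_i + 1/q_i'$ yields $a_i \in (0,1)$ and $1-a_i = r_i'/q_i'$. By bilinearity of the commutator pairing,
\[
\lambda_{I_1}\lambda_{I_2}\langle [b,T]g_R^1, h_R^1\rangle = \bla [b,T](\lambda_{I_1}^{a_1}\lambda_{I_2}^{a_2}g_R^1), \lambda_{I_1}^{1-a_1}\lambda_{I_2}^{1-a_2}h_R^1 \bra,
\]
so summing over any finite subcollection and invoking Definition \ref{norm:2} yields
\[
\sum_{I_1,I_2}\lambda_{I_1}\lambda_{I_2}\babs{\langle [b,T]g_R^1,h_R^1\rangle} \lesssim \mathcal{O}^{\Sigma}_{p,q}(b;K)\cdot A\cdot B,
\]
with $A := \BNorm{\sum\lambda_{I_1}^{a_1}\lambda_{I_2}^{a_2}1_{\widetilde R}}{L^{p_1}_{x_1}L^{p_2}_{x_2}}$ and $B := \BNorm{\sum\lambda_{I_1}^{1-a_1}\lambda_{I_2}^{1-a_2}1_R}{L^{q_1'}_{x_1}L^{q_2'}_{x_2}}$. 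The second pairing $\sum\lambda_{I_1}\lambda_{I_2}\babs{\langle [b,T]h_R^2, g_R^2\rangle}$ is treated in an entirely parallel fashion.

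The key point is that both $A$ and $B$ split as tensor products: the indicator $1_{\widetilde{I_1}\times\widetilde{I_2}}$ factors, and the mixed norm decouples under Fubini, giving
\[
A = \BNorm{\sum_{I_1}\lambda_{I_1}^{a_1}1_{\widetilde{I_1}}}{L^{p_1}(\R^{d_1})}\BNorm{\sum_{I_2}\lambda_{I_2}^{a_2}1_{\widetilde{I_2}}}{L^{p_2}(\R^{d_2})},
\]
and analogously for $B$. Each one-parameter factor is then controlled by the standard sparse $L^s$ embedding $\bNorm{\sum c_I 1_I}{L^s}^s \lesssim \sum c_I^s \abs{I}$, valid for $s > 1$ and a sparse family on $\R^{d_i}$; this is a short duality-against-$Mg$ argument using the disjointness of the major sets $E_{I_i}$. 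The same bound applies with $1_{\widetilde{I_i}}$ in place of $1_{I_i}$, since $\widetilde{I_i}$ lies in a fixed dilate of $I_i$ and dilation preserves sparseness up to constants. With the choices $a_i p_i = r_i' = (1-a_i)q_i'$ and the standing constraint $\sum_{I_i}\lambda_{I_i}^{r_i'}\abs{I_i}\leq 1$, we obtain $A,B \lesssim 1$, and the claim follows.

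The mildest point of friction is that Definition \ref{norm:2} constrains the pairing to a finite family; this is resolved by first summing over truncations $\mathscr{S}^1_N \times \mathscr{S}^2_N$ and passing to the limit by monotone convergence. The sparse $L^s$ embedding is also the only place where the restriction $p_i, q_i \in (1,\infty)$ is crucially used, and it is the one step that would break if one attempted an analogous estimate in $L^\infty$-endpoint situations.
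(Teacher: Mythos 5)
Your proposal is correct and follows essentially the same route as the paper: Proposition \ref{prop:oscform} for the oscillation, the bilinear rescaling with the exponents $r_i'/p_i$ and $r_i'/q_i'$ (which are exactly the paper's choices), factorization of the mixed norm via the tensor structure of the product coefficients, and the sparse $L^s$ embedding (with the dualization against the maximal function to pass from the reflected rectangles back to the original ones) together with $\sum_{I_i}\lambda_{I_i}^{r_i'}\abs{I_i}\le 1$ to bound each factor by a constant. The only cosmetic difference is that you handle infinite collections by truncation and monotone convergence, whereas the paper simply assumes the collections finite without loss of generality.
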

	Technically this limitation is due to the failure of finding any useful rectangular sparse oscillatory characterization of the mixed space $L^s_{x_1}L^t_{x_2},$ when $s\not = t,$ that would correspond with that of the one on the line \eqref{sparseosc}.
\begin{proof}[Proof of Proposition \ref{prop:r1r2}] Without loss of generality we may assume that the collections $\mathscr{S}^i$ are finite. First, by Proposition \ref{prop:oscform} we have 
\begin{align}\label{ddd}
	\abs{I_1}\abs{I_2}\osc(b,I_1\times I_2) \lesssim  \abs{\langle [b,T]g_{I_1\times I_2}^1, h_{I_1\times I_2}^1,\rangle} +\abs{\langle [b,T]h_{I_1\times I_2}^2,g_{I_1\times I_2}^2\rangle}
\end{align}
		where we write $h_{I_1\times I_2}^i,g_{I_1\times I_2}^i$ for the functions $g_i,h_i$. Also, let $R(h_{I_1\times I_2}^i)$ and $R(g_{I_1\times I_2}^i)$ stand respectively for the rectangles on which $h_{I_1\times I_2}^i$ and $g_{I_1\times I_2}^i$ are supported.
		Then, by \eqref{ddd}, the relation $1/r_i = 1/q_i-1/p_i,$ and the Definition \ref{norm:2} of the off-support norm, we estimate 
	\allowdisplaybreaks \begin{align*}
	&\sum_{I_1\in\mathscr{S}^1}\sum_{I_2\in\mathscr{S}^2} \lambda_{I_1}\lambda_{I_2}\abs{I_1}\abs{I_2}\osc(b,I_1\times I_2) \\  
	&\lesssim  \sum_{I_1\in\mathscr{S}^1}\sum_{I_2\in\mathscr{S}^2} \lambda_{I_1}\lambda_{I_2}\abs{\langle [b,T]g_{I_1\times I_2}^1, h_{I_1\times I_2}^1,\rangle} + \sum_{I_1\in\mathscr{S}^1}\sum_{I_2\in\mathscr{S}^2}\lambda_{I_1}\lambda_{I_2}\abs{\langle [b,T]h_{I_1\times I_2}^2,g_{I_1\times I_2}^2\rangle}
	\end{align*}
	and let us estimate the sums as 
	\begin{align*}
	&\lesssim \sum_{I_1\in\mathscr{S}^1}\sum_{I_2\in\mathscr{S}^2} \lambda_{I_1}\lambda_{I_2}\abs{\langle [b,T]h_{I_1\times I_2}^i,g_{I_1\times I_2}^i\rangle}  \\
	&=\sum_{i=1,2}\sum_{I_1\in\mathscr{S}^1}\sum_{I_2\in\mathscr{S}^2} \abs{\langle [b,T] (\lambda_{I_1}^{r_1'/p_1}\lambda_{I_2}^{r_2'/p_2}h_{I_1\times I_2}^i),\lambda_{I_1}^{r_1'/q_1'}\lambda_{I_2}^{r_2'/q_2'}g_{I_1\times I_2}^i\rangle} \\
	&\leq  \mathcal{O}^{\Sigma}_{p,q}(b;K) \sum_{i=1,2} \Norm{\sum_{I_1\in\mathscr{S}^1}\sum_{I_2\in\mathscr{S}^2}\lambda_{I_1}^{r_1'/p_1}\lambda_{I_2}^{r_2'/p_2}1_{R(h_{I_1\times I_2}^i)}}{L^{p_1}_{x_1}L^{p_2}_{x_2}} \\
	&\qquad\qquad\times \Norm{\sum_{I_1\in\mathscr{S}^1}\sum_{I_2\in\mathscr{S}^2} \lambda_{I_1}^{r_1'/q_1'}\lambda_{I_2}^{r_2'/q_2'} 1_{R(g_{I_1\times I_2}^i)} }{L^{q_1'}_{x_1}L^{q_2'}_{x_2}}.
	\end{align*}
	Using that the coefficients are of product form, we can then, for example, estimate one of the terms as
	\begin{equation*}
		\begin{split}
			&\Norm{\sum_{I_1\in\mathscr{S}^1}\sum_{I_2\in\mathscr{S}^2} \lambda_{I_1}^{r_1'/q_1'}\lambda_{I_2}^{r_2'/q_2'} 1_{R(g_{I_1\times I_2}^2)} }{L^{q_1'}_{x_1}L^{q_2'}_{x_2}} = \Norm{\sum_{I_1\in\mathscr{S}^1}\sum_{I_2\in\mathscr{S}^2} \lambda_{I_1}^{r_1'/q_1'}\lambda_{I_2}^{r_2'/q_2'} 1_{I_1\times I_2} }{L^{q_1'}_{x_1}L^{q_2'}_{x_2}} \\
		&=  \Norm{\sum_{I_1\in\mathscr{S}^1}\lambda_{I_1}^{r_1'/q_1'} 1_{I_1} }{L^{q_1'}_{x_1}} \Norm{\sum_{I_2\in\mathscr{S}^2}\lambda_{I_2}^{r_2'/q_2'} 1_{ I_2} }{L^{q_2'}_{x_2}} \lesssim 1,
		\end{split}
	\end{equation*} 
	where in the last step we used the sparseness of the collections $\mathscr{S}^i$ and the assumed bounds $\sum_{I_i\in\mathscr{S}^i}\lambda_{I_i}^{r_i'}\abs{I_i} \leq 1.$ The remaining three terms are estimated in the same fashion, basically repeating the arguments that we already went through in the proof of Proposition \ref{prop:infty1r2}.
\end{proof}

%
%\begin{rem}\label{rem:weird} Let $\mathfrak{F}$ stand for any collection of collections of rectangles, e.g. the collection of all $\frac{1}{2}$-sparse families of cubes, and denote
%	\[
%	\mathbb{O}_{\mathfrak{F}}(b) =  \sup\Big\{ \sum_{R\in\mathscr{S}}\lambda_R\abs{R}\osc(b,R)\colon \mathscr{S}\in\mathfrak{F}, \sum_{R\in\mathscr{S}}\abs{R}\lambda_Q^{r'}\leq 1\Big\}.
%	\] 
%	Let $\mathfrak{F}_{Q}$ stand for the collection of all $\frac{1}{2}$-sparse collections of cubes, and let
%	$\mathfrak{F}_{R}$ stand for the collection of all $\frac{1}{2}$-sparse collections of rectangles.
%	Then, a straightforward adaptation of Proposition 3.2. in \cite{AHLMO} would allows us to actually show that 
%	\[
%		\mathbb{O}_{\mathfrak{F}_{R}}(b) \lesssim \inf_{c\in\C}\Norm{b-c}{L^r(\R^d)}  \lesssim \mathbb{O}_{\mathfrak{F}_{Q}}(b).
%	\]
%	This demonstrates that in \eqref{sparseosc} it is irrelevant whether we control all sparse collections of cubes or all  sparse collections of rectangles.
%\end{rem}

\section{Upper bound for the case $p_1=q_1, p_2<q_2$}\label{section:alphahöldercases1} 
 We are now left with two cases and we first deal with this one. We will use the representation of bi-parameter CZO's as dyadic model operators; this is maybe surprising as the corresponding lower bound obtained in Proposition \ref{prop:lb:c1a2} seems simple and should perhaps yield an easier proof.
We will prove 
\begin{prop}\label{prop:c1a2T} Let $p_1=q_1$ and $p_2<q_2,$ let $b(x_1,\cdot) \in \dot C^{0,\alpha_2}(\R^{d_2})$ and $b(\cdot,x_2)=  constant.$ Then, we have 
	\[
	\Norm{[b,T]f}{L^{p_1}_{x_1}L^{q_2}_{x_2}} \lesssim 	\Norm{b(x_1,\cdot)}{\dot C^{0,\alpha}_{x_2}(\R^{d_2})} \Norm{f}{L^{p_1}_{x_1}L^{p_2}_{x_2}}.
	\]
\end{prop}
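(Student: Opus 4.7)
The hypothesis $b(\cdot,x_2) = $ constant forces $b(x_1,x_2) = \beta(x_2)$ for some $\beta \in \dot C^{0,\alpha_2}(\R^{d_2})$, where $\alpha_2 = d_2(1/p_2 - 1/q_2)$. Following the stated strategy, the plan is to apply Martikainen's bi-parameter representation theorem to write $\langle Tf,g\rangle$ as an average over random dyadic grids of finite sums of bi-parameter dyadic model operators (tensor-type shifts, full paraproducts, and partial paraproducts), with coefficients admitting geometric decay in the complexities. It then suffices to bound the commutator of $\beta$ with each model operator by $\|\beta\|_{\dot C^{0,\alpha_2}}\|f\|_{L^{p_1}_{x_1}L^{p_2}_{x_2}}$ with constants polynomial in the complexity parameters, so that the summation against the representation's decay converges.

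For a bi-parameter shift of tensor form $S = S_1 \otimes S_2$, with $S_i$ a one-parameter dyadic shift on $\R^{d_i}$, the $x_1$-independence of $\beta$ gives the operator identity $[\beta,S] = S_1 \otimes [\beta,S_2]$. Since $L^{q_2}(\R^{d_2})$ is UMD, $S_1$ extends boundedly to $L^{p_1}(\R^{d_1}; L^{q_2}(\R^{d_2}))$ with norm polynomial in the $x_1$-complexity, and applying $[\beta, S_2]$ fiberwise in $x_1$ yields
$$\| [\beta, S] f \|_{L^{p_1}_{x_1} L^{q_2}_{x_2}} \lesssim \|S_1\|_{L^{p_1}(L^{q_2}) \to L^{p_1}(L^{q_2})} \cdot \| [\beta, S_2] \|_{L^{p_2} \to L^{q_2}} \cdot \|f\|_{L^{p_1}_{x_1} L^{p_2}_{x_2}}.$$
The middle factor is controlled by Janson's one-parameter Hölder off-diagonal commutator estimate, $\|[\beta, S_2]\|_{L^{p_2} \to L^{q_2}} \lesssim \|\beta\|_{\dot C^{0,\alpha_2}}$, again with polynomial dependence on the $x_2$-complexity.

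For the full and partial paraproduct terms, the key observation is that $\beta = \beta(x_2)$ has vanishing $x_1$-Haar coefficients, so every block in the paraproduct expansion that requires an $x_1$-Haar coefficient of $b$ drops out. The surviving blocks factor as a one-parameter paraproduct in $x_2$ with symbol $\beta$ tensored against an $x_1$-side averaging or Haar-projection piece; the same tensor/UMD argument as for shifts reduces the estimate to a classical one-parameter Hölder paraproduct commutator bound in $x_2$ against $L^{p_1}$-boundedness of the $x_1$-side.

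The main obstacle is twofold: first, checking that Martikainen's representation is available under the paper's weaker bi-parameter SIO axioms, which assume only the mixed size-regularity \eqref{kernel:regularity} rather than full two-parameter regularity---this should go through by inspection since all kernel estimates used in the representation only couple one parameter at a time; second, tracking the polynomial complexity growth in each per-model bound carefully so that summation against the representation's geometric decay converges. Once both points are in place, summation over complexities and averaging over the dyadic grids completes the proof of Proposition~\ref{prop:c1a2T}.
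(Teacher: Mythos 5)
Your proposal has a genuine gap at the core reduction step. After invoking Martikainen's representation theorem you treat the bi-parameter dyadic shifts as being ``of tensor form $S = S_1\otimes S_2$'' and exploit the identity $[\beta,S]=S_1\otimes[\beta,S_2]$. But the bi-parameter shifts produced by the representation theorem are not tensor products of one-parameter shifts: the coefficients $\alpha_{(I)(J)KV}$ depend jointly on all the cubes $I_1,I_2,J_1,J_2,K,V$ and only satisfy the size bound $|\alpha_{(I)(J)KV}|\lesssim (|I_1||I_2||J_1||J_2|)^{1/2}/|K\times V|$; they do not factor as $\alpha^1_{(I)K}\,\alpha^2_{(J)V}$. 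The situation is even worse for the partial paraproducts, whose coefficient sequences are controlled only by a $\BMO$-type condition in one of the parameters and are genuinely coupled across both. So the decomposition $[\beta,S]=S_1\otimes[\beta,S_2]$ and the ensuing reduction to a vector-valued one-parameter shift bound times Janson's off-diagonal commutator bound simply do not apply, and there is no tensor/UMD shortcut. (A secondary issue: even if such a factorisation existed, Janson's theorem is stated for CZOs, not for one-parameter dyadic shifts with explicit polynomial complexity dependence, which would itself need a separate argument.)

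What the paper does instead, in Proposition \ref{prop:c1a2}, is to expand the products $bf$ and $bg$ in martingale differences only in the $x_2$-variable (since $b$ is constant in $x_1$), yielding the auxiliary operators $A_1,A_2,A_3$ of the decomposition strategy. The $A_1,A_2$ contributions are handled by the fractional paraproduct bound of Proposition \ref{prop:frac:para} together with the mixed-norm boundedness of the model operators; the $A_3$ contributions, after using the cancellation of the commutator, collapse to differences $\ave{b}_{J_2}-\ave{b}_{J_1}$ of averages at comparable scales, estimated via $|\ave{b}_{J_2}-\ave{b}_{J_1}|\le\Norm{b(x_1,\cdot)}{\dot C^{0,\alpha_2}_{x_2}}\ell(V)^{\alpha_2}$. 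The final estimates run through bi-parameter square functions, the bi-parameter Fefferman--Stein inequality (Lemma \ref{lem:FSbp}), and the fractional Fefferman--Stein inequality (Lemma \ref{lem:frac:FS}) to absorb the factor $\ell(V)^{\alpha_2}$ into a fractional maximal operator. None of this factors into a one-parameter commutator bound tensored against a vector-valued bound. As a minor point, your concern about whether the representation theorem holds under the paper's ``weaker bi-parameter SIO axioms'' is misdirected: the weakened kernel hypotheses (non-degeneracy plus mixed size-regularity) are only used for the lower bounds, while the upper-bound arguments, including Theorem \ref{thm:birep}, assume the full Martikainen CZO axioms of Section \ref{section:biparSIOs}.
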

The dyadic representation theorem of bi-parameter CZO's of Martikainen \cite{Ma1} is the following
\begin{thm}\label{thm:birep} Given a bi-parameter CZO, it can be written as an expectation
	\[
	\langle Tf,g\rangle = C_T\mathbb{E}_{\omega_1}\mathbb{E}_{\omega_2} \sum_{\substack{i = (i_1,i_2)\in \N^2 \\ j=(j_1,j_2)\in \N^2}} 2^{-\max(i_1,i_2)\frac{\delta}{2}} 2^{-\max(j_1,j_2)\frac{\delta}{2}}\big\langle S^{i,j}_{\calD^1_{\omega_1},\calD^2_{\omega_2}}f,g\big\rangle,
	\]
	where $ S^{i,j}_{\calD^1_{\omega_1},\calD^2_{\omega_2}}$ are bi-parameter dyadic model operators (detailed below) associated to the randomized dyadic grids $\calD^1_{\omega_1}$ and $\calD^2_{\omega_2}.$
\end{thm}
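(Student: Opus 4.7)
The plan is to adapt Hyt\"onen's one-parameter dyadic representation theorem to the bi-parameter setting by independently randomizing the underlying dyadic grids in each factor. I would fix translation parameters $\omega_i\in(\{0,1\}^{d_i})^{\Z}$ and work with the Nazarov--Treil--Volberg random dyadic grid $\calD^i_{\omega_i}$ on $\R^{d_i}$. Expanding $f,g\in\Sigma$ in the tensor-product Haar basis gives
\[
\langle Tf,g\rangle \;=\; \sum_{I_1,I_2\in\calD^1_{\omega_1}}\sum_{J_1,J_2\in\calD^2_{\omega_2}} \bigl\langle T(h_{I_1}\otimes h_{J_1}),h_{I_2}\otimes h_{J_2}\bigr\rangle\widehat f(I_1,J_1)\overline{\widehat g(I_2,J_2)},
\]
where each $h_{\bullet}$ is either cancellative or a non-cancellative $L^2$-normalized indicator, chosen according to the standard tensor-product Haar expansion rules.

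Next I would run the NTV good/bad-cube decomposition in each parameter independently. For appropriately chosen $r_i,\gamma_i$, a cube $I\in\calD^1_{\omega_1}$ is bad if there is a much larger cube of $\calD^1_{\omega_1}$ whose boundary lies abnormally close to $I$; the analogous notion applies in $\calD^2_{\omega_2}$. Mutual independence of $\omega_1$ and $\omega_2$ together with the standard NTV reduction shows that, after taking $\E_{\omega_1}\E_{\omega_2}$, only the pairs $(I_1,I_2),(J_1,J_2)$ whose smaller member is good in both parameters survive, up to an absolute constant absorbed into $C_T$. Conditioning on these good pairs I would reorganize the quadruple sum by the four scale indices $i_1=\log_2^+(\ell(I_1)/\ell(I_2))$, $i_2=\log_2^+(\ell(I_2)/\ell(I_1))$ and the analogous $j_1,j_2$ in the second factor.

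For each fixed $(i,j)$ I would estimate $\langle T(h_{I_1}\otimes h_{J_1}),h_{I_2}\otimes h_{J_2}\rangle$ by case analysis on whether the Haar supports are nested, separated, or equal in each factor. In the doubly separated case the full kernel representation combined with the bi-parameter regularity estimate yields decay $2^{-\max(i_1,i_2)\delta}2^{-\max(j_1,j_2)\delta}$, half of which is absorbed into the $L^2$-normalization of the model operator to produce the stated prefactor $2^{-\max(i_1,i_2)\delta/2}2^{-\max(j_1,j_2)\delta/2}$. When the supports are separated in only one factor I would invoke the partial kernel representation for the single-parameter decay, while the other factor is handled by a $T1$-type paraproduct reduction exploiting the boundedness of $T$ and of $T^{*}_1,T^{*}_2,T^*$; the remaining purely diagonal terms likewise collapse into full paraproduct model operators whose symbols are controlled by product/little-BMO norms of $T(1\otimes 1)$, $T^{*}_1(1\otimes 1)$, and so on. Grouping all Haar coefficients with fixed complexity $(i,j)$ then assembles a finite linear combination $S^{i,j}_{\calD^1_{\omega_1},\calD^2_{\omega_2}}$ of dyadic shifts, partial paraproducts, and full paraproducts.

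The main obstacle is the sheer bookkeeping: the number of geometric subcases is considerably larger than in one parameter, and one must verify that the resulting $S^{i,j}$ are uniformly bounded on $L^2(\R^d)$ independently of $(i,j)$, so that the weights $2^{-\max(i_1,i_2)\delta/2}2^{-\max(j_1,j_2)\delta/2}$ produce an absolutely convergent expansion. Securing this uniform bound for the paraproduct pieces relies essentially on the existence of the full adjoint $T^*$ and the two partial adjoints $T^{*}_1,T^{*}_2$ from Definition \ref{defn:CZO:mart}, whose testing conditions furnish the BMO-type control of the paraproduct symbols.
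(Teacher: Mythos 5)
You should first note that the paper does not prove this statement at all: Theorem \ref{thm:birep} is quoted verbatim from Martikainen \cite{Ma1} and used as a black box, so there is no in-paper argument to compare against. Your outline does reproduce the strategy of that reference (which in turn adapts Hyt\"onen's one-parameter representation theorem): independent NTV randomization of the two grids, tensor-product Haar expansion, reduction to good cubes in each parameter, a case analysis on separated/nested/equal supports in each factor using the full and partial kernel representations, and paraproducts for the diagonal pieces whose symbols are controlled via $T1$, $T^*1$, $T_1^*1$, $T_2^*1$ and the BMO-valued coefficients coming from the partial kernel hypotheses. So the roadmap is the right one.

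As a proof, however, the proposal has genuine gaps beyond mere bookkeeping. First, your indexing of the complexity is wrong: you set $i_1=\log_2^+(\ell(I_1)/\ell(I_2))$ and $i_2=\log_2^+(\ell(I_2)/\ell(I_1))$, so one of $i_1,i_2$ is always zero and $\max(i_1,i_2)$ is just the relative scale gap. In the theorem the shifts are organized by $I_1^{(i_1)}=I_2^{(i_2)}=K$, i.e.\ by the generation gaps to a common dyadic ancestor $K$, and it is the goodness of the smaller cube (forcing it deep inside, or quantitatively far from, $K$) that produces the decay $2^{-\max(i_1,i_2)\delta/2}$; with your parametrization the separated case with $\ell(I_1)=\ell(I_2)$ but $I_1,I_2$ far apart would all be lumped at complexity zero and the series would not converge. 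Second, the good/bad reduction is asserted rather than proved: the identity $\langle Tf,g\rangle=c\,\E_{\omega}\sum_{\text{good}}\cdots$ requires the independence of position and goodness and a separate argument to control the bad part, and in two parameters this must be run simultaneously in both grids, which is one of the delicate points of \cite{Ma1}. Third, the uniform $L^2$ bounds on the $S^{i,j}$, the product-BMO membership of the full paraproduct symbols, and the $\BMO$ bounds on the partial paraproduct coefficient sequences are exactly the substantive content of the theorem and are all deferred. Given that the paper itself only cites \cite{Ma1}, the appropriate move is to do the same rather than to sketch it; if you do want to write out a proof, the complexity indexing and the good-cube reduction are the two places where your sketch as written would fail.
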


By Theorem \ref{thm:birep} to have estimates for $[b,T],$ it is enough to have them for $[b,S^{i,j}],$ where $S^{i,j}$ is a dyadic model operator, and with constants of at most polynomial growth in the parameters $i, j,$ namely, it is enough to prove the following
\begin{prop}\label{prop:c1a2} Let $p_1=q_1$ and $p_2<q_2,$ let $b(x_1,\cdot) \in \dot C^{0,\alpha_2}(\R^{d_2})$ and $b(\cdot,x_2)=  constant.$ Then, we have 
	\[
	\bNorm{[b,S^{i,j}]f}{L^{p_1}_{x_1}L^{q_2}_{x_2}} \lesssim \Norm{b(x_1,\cdot)}{\dot C^{0,\alpha_2}(\R^{d_2})}\Norm{f}{L^{p_1}_{x_1}L^{p_2}_{x_2}}
	\]
	with an implied constant of at most polynomial growth in $i,j.$
\end{prop}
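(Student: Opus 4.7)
The structural fact driving the proof is that the hypothesis $b(\cdot, x_2) = \text{constant}$ forces $b(x_1, x_2) = \beta(x_2)$ for a single function $\beta \in \dot C^{0,\alpha_2}(\R^{d_2})$. Hence every first-parameter martingale difference $\Delta^1_{I_1}b$ vanishes identically, which kills any contribution to $[b, S^{i,j}]$ that interacts with $b$ through the grid $\calD^1_{\omega_1}$. I would split the analysis of $S^{i,j}$ according to its two kinds of building blocks: bi-parameter shifts and (full or partial) paraproducts.

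For the bi-parameter shift components, the kernel $K^{i,j}(x,y)$ is supported in pairs $K_1\times K_2$ with both $x_2,y_2\in K_2$, so the pointwise identity
\[
[\beta, S^{i,j}]f(x) = \int\!\!\int K^{i,j}(x,y)\bigl(\beta(x_2)-\beta(y_2)\bigr)f(y)\,\ud y
\]
combined with the H\"older bound $|\beta(x_2)-\beta(y_2)|\leq \Norm{\beta}{\dot C^{0,\alpha_2}_{x_2}}\ell(K_2)^{\alpha_2}$ absorbs the Lipschitz increment into the shift coefficients and produces a fractional analogue of order $\alpha_2$ in the second parameter. Freezing $x_1$, this is a one-parameter fractional shift in $x_2$ whose $L^{p_2}_{x_2}\to L^{q_2}_{x_2}$ boundedness is classical in the Riesz-potential range $\alpha_2=d_2(1/p_2-1/q_2)$, with polynomial growth in the shift parameters $j$; the $x_1$-direction passes through via the $L^{p_1}_{x_1}$-boundedness of the associated one-parameter shift in $x_1$, yielding the required mixed-norm estimate.

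For the paraproduct components the simplification is even stronger. The full paraproduct and the partial paraproducts in which $b$ is tested against $x_1$-Haar functions vanish because $\langle \beta,h_{I_1}\rangle_{x_1}\equiv 0$ for every $x_1$-Haar function. The only surviving pieces are the partial paraproducts where $b$ is tested against $x_2$-Haar functions, and for these the H\"older regularity gives $\Norm{\Delta^2_{I_2}b}{L^{\infty}}\lesssim \ell(I_2)^{\alpha_2}\Norm{\beta}{\dot C^{0,\alpha_2}_{x_2}}$. Inserting this into the standard paraproduct estimate converts it into a fractional paraproduct carrying the same $\ell(\cdot)^{\alpha_2}$ gain in the second parameter, and it satisfies the same $L^{p_1}_{x_1}L^{p_2}_{x_2}\to L^{p_1}_{x_1}L^{q_2}_{x_2}$ bound with polynomial growth in $(i,j)$.

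The principal obstacle I anticipate is the careful verification that the fractional one-parameter shifts and paraproducts obtained after absorbing $\ell(K_2)^{\alpha_2}$ (resp. $\ell(I_2)^{\alpha_2}$) into their coefficients still satisfy the $L^{p_2}_{x_2}\to L^{q_2}_{x_2}$ estimate with explicit polynomial dependence on the shift parameters; once this is checked, summing against the geometrically decaying weights $2^{-\max(i_1,i_2)\delta/2}2^{-\max(j_1,j_2)\delta/2}$ in Theorem \ref{thm:birep} and invoking Proposition \ref{prop:c1a2} term-by-term yields Proposition \ref{prop:c1a2T}.
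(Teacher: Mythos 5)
Your treatment of the shift components is essentially sound and is in substance the same cancellation the paper uses: since the kernel of a shift is supported on blocks $K\times V$ with $x_2,y_2\in V$, the increment $|\beta(x_2)-\beta(y_2)|\lesssim \Norm{\beta}{\dot C^{0,\alpha_2}}\ell(V)^{\alpha_2}$ produces exactly the fractional square-function expression that the paper extracts through the $A_3$-difference $(\ave{b}_{J_2}-\ave{b}_{J_1})$, and which is then controlled by the fractional Fefferman--Stein inequality (Lemma \ref{lem:frac:FS}) together with the square-function bounds of Lemma \ref{lem:bound:sf}. You should cite (or prove) those two lemmas rather than appeal to "classical" Riesz-potential bounds, but this part of the argument goes through.

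The paraproduct part of your proposal, however, rests on a false premise. You claim that the full paraproduct and the partial paraproducts "in which $b$ is tested against $x_1$-Haar functions" vanish because $\langle\beta,h_{I_1}\rangle_{x_1}\equiv 0$. But in Martikainen's representation theorem the paraproduct coefficients $\alpha_{KV}$ and $\alpha_{(J)KV}$ come from the testing conditions on $T$ itself; it is $f$ and $g$ (hence $bf$ and $bg$), not $b$, that are paired against Haar functions. Concretely, for the full paraproduct the commutator summand is
\[
\alpha_{KV}\big(\ave{f}_{K\times V}\langle \beta g, h_K\otimes h_V\rangle - \ave{\beta f}_{K\times V}\langle g, h_K\otimes h_V\rangle\big),
\]
which does not vanish for $b=\beta(x_2)$: taking $f$ with $\ave{f}_{K\times V}=1$ and $\ave{\beta f}_{K\times V}\ne\ave{\beta}_V$ already gives a nonzero term. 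The same is true for both symmetries of the partial paraproducts. These terms are where most of the work in the paper's proof lies: after the $A_1,A_2,A_3$ decomposition in the second variable, the surviving difference terms for the partial paraproducts must be handled through the $H^1$--$\BMO$ duality of Lemma \ref{lem:H1-BMO} (because the coefficients $(\alpha_{(J)KV})_K$ are only normalized in a $\BMO_2(\R^{d_1})$ sense, not pointwise), and for the full paraproducts one reduces to $\ave{(\ave{b}_V-b)f}_{K\times V}\lesssim\Norm{\beta}{\dot C^{0,\alpha_2}}\ave{\mathsf{M}^{\alpha_2}f}_{K\times V}$ and the known mixed-norm bounds for the model operator. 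Your proposal supplies no argument for any of these cases, so as written it does not prove the proposition.
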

We have that $S^{i,j}$ is either a shift, a partial paraproduct or a full paraproduct, to detail each of which we first recall few basic facts about martingale differences and Haar functions, the reader familiar with these may skip to Section \ref{decomp}.

\subsubsection{Haar functions, basic facts}

Given a dyadic grid $\calD$ and a cube $I\in\calD$ the martingale difference on $I$ is 
$$
\Delta_If = \sum_{P\in\ch(I)}\big(\ave{f}_P-\ave{f}_I\big)1_P.
$$ 
These are naturally useful as $f = \sum_{I\in\calD}\Delta_If,$
where each element is nicely localized and has zero mean.
For a given interval $I = I_l\cup I_r\subset\R,$ with a left- and a right half, respectfully the cancellative and non-cancellative Haar functions supported $I$ are
$$
h_I = \frac{1_{I_l}-1_{I_r}}{\abs{I}^{1/2}},\qquad h_I^0 = \frac{1_I}{\abs{I}^{1/2}}.
$$
Given a rectangle $I = I_1\times\dots\times I_d\subset\R^d,$ the Haar functions on $I$ are
$$
h_I = \otimes_{i=1}^d \wt h_{I_i},\qquad \wt{h}_{I_i}\in\{h_{I_i},h_{I_i}^0\}
$$
on the condition that at least one component is a cancellative Haar function.
Hence, all in all, there are $2^d-1$ Haar functions on any given rectangle of dimension $d,$ along with the non-cancellative Haar function $1_I/\abs{I}^{1/2}.$ 
It is a basic fact that
\begin{align}\label{block}
	\Delta_If = \sum_{i=1}^{2^d-1}\langle f,h_I^i\rangle h_I^i
\end{align}
where we enumerate all $2^{d}-1$ cancellative Haar functions on the rectangle $I.$ 
Hence, when proving upper bounds we just write $h_I = h_I^i$ for a generic cancellative Haar function on $I$ and it is customary to ignore the $i=1,\dots,2^{d}-1$ summation in \eqref{block}.

Fix a rectangle $R=I\times J\subset\R^{d_1}\times\R^{d_2}.$
Fully cancellative Haar functions of product form are the tensor products $h_R=h_I\otimes h_J,$ where both $h_I,h_J$ are cancellative Haar functions respectfully on $I$ and $J.$ Then, simply,
\begin{align*}
\Delta_{I\times J}f = \Delta_I\big(\Delta_Jf\big) = \sum_{i=1}^{2^{d_1}-1}\sum_{j=1}^{2^{d_2}-1}\bave{\bave{f,h_J^j}h_J^j,h_I^i}h_I^i = \sum_{i=1}^{2^{d_1}-1}\sum_{j=1}^{2^{d_2}-1}\bave{f,h_I^i\otimes h_J^j}h_I^i\otimes h_J^j, 
\end{align*}
and again each Haar $h_R = h_I^i\otimes h_J^j$ carries enough cancellation for boundedness of bi-parameter square functions etc.

\subsection{Model operators}

A pair of intervals we denote $(I)= (I_1,I_2)$ and with  $I^k  = I^{(k)}= Q$ we mean that $I,Q\in\calD$, $I\subset Q$ and $\ell(I) = 2^{-k}\ell(Q).$ 
Now, the bi-parameter dyadic model operators of Theorem \ref{thm:birep} have the generic form
\[
\big\langle S^{i,j}f,g\big\rangle = \sum_{\substack{K\in\calD^1 \\ I_1^{i_1} = I_2^{i_2} = K }}\sum_{\substack{V\in\calD^2 \\ J_1^{j_1} = J_2^{j_2} = V }}\alpha_{(I)(J)KV}\langle f, \widetilde{h}_{I_1\times J_1}\rangle \langle g,  \widetilde{h}_{I_2\times J_2}\rangle,
\]
where the coefficients $\alpha_{(I)(J)KV}$ have sizes according to which dyadic model operator we have:
There are in total three different kinds of model operators that appear in \ref{thm:birep}.
\subsubsection{Shifts}
We have 
$$
\langle f, \widetilde{h}_{I_1\times J_1}\rangle \langle g,  \widetilde{h}_{I_2\times J_2}\rangle = \langle f, h_{I_1\times J_1}\rangle \langle g,  h_{I_2\times J_2}\rangle
$$
where each of the Haar functions is cancellative
and the coefficients have the size 
$$
\abs{\alpha_{(I)(J)KV}} \lesssim \frac{(\abs{I_1}\abs{I_2}\abs{J_1}\abs{J_2})^{1/2}}{\abs{K\times V}}.
$$ 

\subsubsection{Partial paraproducts}
We have $i_1=i_2 = 0$ and
\[
\langle f, \widetilde{h}_{I_1\times J_1}\rangle \langle g,  \widetilde{h}_{I_2\times J_2}\rangle = \langle f, \frac{1_K}{\abs{K}}\otimes h_{J_1}\rangle \langle g,  h_{K}\otimes h_{J_2}\rangle,
\]
or the symmetric case,
$$
\langle f, \widetilde{h}_{I_1\times J_1}\rangle \langle g,  \widetilde{h}_{I_2\times J_2}\rangle = \langle f, h_K\otimes h_{J_1}\rangle \langle g,  \frac{1_K}{\abs{K}}\otimes h_{J_2}\rangle,
$$ 
and in both cases the coefficients have the size
\begin{align*}
\Norm{	(\alpha_{(J)KV})_K}{\BMO_2(\R^{d_1})} &= \sup_{K_0\in\calD^1}\frac{1}{\abs{K_0}^{1/2}}\Norm{\big(\sum_{\substack{K\in\calD \\ K\subset K_0}}\abs{\alpha_{(J)KV}}^2\frac{1_K}{\abs{K}}\big)^{1/2}}{L^2(\R^{d_1})} \\
&\lesssim \frac{\abs{J_1}^{1/2}\abs{J_2}^{1/2}}{\abs{V}}.
\end{align*}

There is also the other symmetry of $j_1=j_2 = 0,$ and then
\[
\langle f, \widetilde{h}_{I_1\times J_1}\rangle \langle g,  \widetilde{h}_{I_2\times J_2}\rangle = \langle f,h_{I_1}\otimes \frac{1_V}{\abs{V}}\rangle \langle g,  h_{I_2}\otimes h_{V}\rangle,
\]
and its symmetric case
\[
\langle f, \widetilde{h}_{I_1\times J_1}\rangle \langle g,  \widetilde{h}_{I_2\times J_2}\rangle =  \langle f,  h_{I_1}\otimes h_{V}\rangle\langle g,h_{I_2}\otimes \frac{1_V}{\abs{V}}\rangle,
\]
and in both of these two cases the coefficients have the size
\begin{align*}
\Norm{	(\alpha_{(I)KV})_V}{\BMO_2(\R^{d_2})} &= \sup_{V_0\in\calD^1}\frac{1}{\abs{V_0}^{1/2}}\Norm{\big(\sum_{\substack{V\in\calD \\ V\subset V_0}}\abs{\alpha_{(I)KV}}^2\frac{1_V}{\abs{V}}\big)^{1/2}}{L^2(\R^{d_2})} \\
&\lesssim \frac{\abs{I_1}^{1/2}\abs{I_2}^{1/2}}{\abs{K}}.
\end{align*}

\subsubsection{Full paraproducts}
We have $i_1=i_2=j_1=j_2 = 0$ and
\[
\langle f, \widetilde{h}_{I_1\times J_1}\rangle \langle g,  \widetilde{h}_{I_2\times J_2}\rangle = \ave{f}_{K\times V}  \langle g,  h_{K}\otimes h_{V}\rangle
\]
or the symmetric case
\[
\langle f, \widetilde{h}_{I_1\times J_1}\rangle \langle g,  \widetilde{h}_{I_2\times J_2}\rangle =  \langle f,  h_{K}\otimes h_{V}\rangle\ave{g}_{K\times V}, 
\]
or we have the other symmetry
\[
\langle f, \widetilde{h}_{I_1\times J_1}\rangle \langle g,  \widetilde{h}_{I_2\times J_2}\rangle = \langle f, h_K\otimes\frac{1_V}{\abs{V}} \rangle \langle g,  \frac{1_K}{\abs{K}}\otimes h_{V}\rangle
\]
and its symmetric case 
\[
\langle f, \widetilde{h}_{I_1\times J_1}\rangle \langle g,  \widetilde{h}_{I_2\times J_2}\rangle = \langle f , \frac{1_K}{\abs{K}}\otimes h_{V}\rangle\langle g, h_K\otimes\frac{1_V}{\abs{V}} \rangle.
\] 
 The boundedness of full paraproducts bootstraps directly to Proposition \ref{blackbox} below and to the boundedness of fractional operators. Hence, we will not record their coefficient size, nonetheless, we mention that the coefficient size is measured by the product BMO space of Chang and Fefferman, see e.g. Section 7 in \cite{AHLMO}.

The following Proposition \ref{blackbox} is e.g. contained as a part of Hyt\"onen-Martikainen-Vuorinen \cite{HMV}.
\begin{prop}\label{blackbox} All the above described dyadic model operators, the shifts, the partial paraproducts and the full paraproducts, are bounded
	\[
	\Norm{ S^{i,j}f}{L^{p_1}_{x_1}L^{p_2}_{x_2}} \lesssim \Norm{f}{L^{p_1}_{x_1}L^{p_2}_{x_2}} 
	\]
	with an implied constant of at most polynomial growth in $i,j.$
\end{prop}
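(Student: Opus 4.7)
The plan is to reduce each bi-parameter dyadic model operator to its one-parameter cousins and then exploit vector-valued (UMD) extensions to pass to the mixed norm $L^{p_1}_{x_1} L^{p_2}_{x_2}$. Recall that $L^{p}(\R^{d})$ is UMD for $p \in (1,\infty)$, so in particular $L^{p_2}(\R^{d_2})$ and $L^{p_1}(\R^{d_1})$ are UMD-valued targets for the one-parameter estimates; this is the engine that lets mixed-norm bounds follow from diagonal ones.

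First I would treat the shifts. Freezing the $\R^{d_2}$-variable, a bi-parameter shift $S^{i,j}$ can be expanded, via the Haar expansion \eqref{block}, as a one-parameter shift of complexity $i$ on $\R^{d_1}$ whose \emph{values} are themselves shifts of complexity $j$ on $\R^{d_2}$. The unweighted one-parameter shift bound admits a UMD-valued extension, with a polynomial loss in the complexity, so applied in the first slot with target $L^{p_2}(\R^{d_2})$ it yields
\[
\Norm{S^{i,j}f}{L^{p_1}_{x_1}L^{p_2}_{x_2}} \lesssim (1+i)^{C} \Norm{\Sigma^{j} f}{L^{p_1}_{x_1}L^{p_2}_{x_2}},
\]
where $\Sigma^j$ is a one-parameter shift of complexity $j$ acting in $x_2$. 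A Fubini step followed by the same UMD-valued shift estimate with target $L^{p_1}(\R^{d_1})$ closes the loop and gives the polynomial bound $(1+i)^{C}(1+j)^{C}$.

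Partial paraproducts are hybrid: one parameter slot is shift-like and the other is paraproduct-like, with the latter carrying a $\BMO_2$ coefficient condition. I would handle the shift slot as above, passing to a vector-valued one-parameter shift estimate, and bound the paraproduct slot using the dyadic $H^1$-BMO duality combined with vector-valued Littlewood-Paley theory; the $\BMO_2$-condition on $(\alpha_{(J)KV})_K$ or $(\alpha_{(I)KV})_V$ produces a $t$-independent constant in that slot, and the iteration of the two one-parameter estimates in the UMD-valued setting yields the mixed-norm bound with at most polynomial growth in the remaining shift complexity.

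The full paraproducts are the genuinely bi-parameter obstacle and have to be handled by Chang-Fefferman product $\BMO$ and product $H^1$: since the coefficients are only controlled in product $\BMO$ rather than rectangular $\BMO$, naive iteration of one-parameter paraproduct bounds fails and one must use the Journ\'e-type bi-parameter square function and the product $H^1$-$\BMO$ duality. For the mixed-norm statement this is done via the bi-parameter Littlewood-Paley / square function equivalence on $L^{p_1}_{x_1}L^{p_2}_{x_2}$, which itself follows from the UMD-valuedness of each factor; there is no shift complexity in the full paraproduct, so no polynomial loss is accrued. The main difficulty is precisely this full-paraproduct step, where the true multi-parameter structure (product $\BMO$ rather than iterated one-parameter $\BMO$) shows up and one cannot argue by simple iteration.
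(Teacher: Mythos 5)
The paper does not prove Proposition~\ref{blackbox} at all: it simply cites the result from Hyt\"onen--Martikainen--Vuorinen \cite{HMV}. So there is no ``paper's own proof'' to compare against; what I can do is assess whether your sketch would actually produce a proof.

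Your high-level roadmap (shifts, then partial paraproducts, then full paraproducts via Chang--Fefferman product BMO) is the right partition, and the observation that full paraproducts are the genuinely bi-parameter obstacle is correct. But there is a real gap in the shift step, and it propagates to the partial paraproducts. You propose to read a bi-parameter shift $S^{i,j}$ as ``a one-parameter shift in $x_1$ whose values are one-parameter shifts in $x_2$.'' This is not literally available: the coefficients $\alpha_{(I)(J)KV}$ are bounded by a product $\frac{(|I_1||I_2|)^{1/2}}{|K|}\cdot\frac{(|J_1||J_2|)^{1/2}}{|V|}$, but they themselves need not factor as $\alpha^{(1)}_{(I)K}\alpha^{(2)}_{(J)V}$, so $S^{i,j}$ is not a composition $S^i_1\circ S^j_2$. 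What you actually have is a one-parameter shift in $x_1$ with \emph{operator-valued} coefficients --- for each fixed $(I_1,I_2,K)$ the inner sum over $(J_1,J_2,V)$ is a shift in $x_2$ that depends on the outer scales. To run the UMD-valued shift bound in the first slot with target $L^{p_2}(\R^{d_2})$ you then need $R$-boundedness (with at most polynomial growth in the complexity) of this family of inner shifts, not merely uniform boundedness. That is the crux of the operator-valued approach, and your sketch omits it entirely. The same issue appears in the partial paraproducts: the shift slot again has coefficients coupled to the paraproduct slot through the $\BMO_2$ condition, so a naive ``do the slots one at a time'' argument needs the $R$-boundedness input made explicit. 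A cleaner route --- and closer to how this is actually done, and to what the present paper does when it bounds the commutator pieces in Section~\ref{section:alphah�ldercases1} --- is to avoid the operator-valued factorization altogether and bound $|\langle S^{i,j}f,g\rangle|$ directly by
\[
\int \sum_{K,V}\bave{|\Delta^{i_1,j_1}_{K,V}f|}_{K\times V}\bave{|\Delta^{i_2,j_2}_{K,V}g|}_{K\times V}\,1_{K\times V},
\]
followed by Cauchy--Schwarz, the bi-parameter square function estimates of Lemma~\ref{lem:bound:sf}, and the Fefferman--Stein inequality of Lemma~\ref{lem:FSbp} in the mixed-norm space. This sidesteps the $R$-boundedness question and gives the polynomial complexity dependence transparently. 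The full paraproduct step is, as you say, the place where product $H^1$--$\BMO$ duality and the bi-parameter square function are indispensable; there the UMD/iteration heuristic genuinely fails and your comments are on target.
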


\subsubsection{Decomposition of products}\label{decomp}
Notice that as the function $b$ bears no important information in the first variable, we only need to analyse it carefully in the second parameter, which we do according to the commutator decomposition strategy from \cite{LMV:Bloom}:
\begin{enumerate}[(i)]
	\item  Whenever a product $bf$ (or $bg$) is paired against a cancellative Haar function $h_J$ and $J\in\calD^2$, we expand with respect to the dyadic grid $\calD^2$ as
	\allowdisplaybreaks \begin{align*}
	bf &= \sum_{J\in\calD^2}\Delta_Jb\Delta_Jf + \sum_{J\in\calD^2}\Delta_JbE_Jf+\sum_{J\in\calD^2}E_Jb\Delta_Jf \\
	& = A_1(b,f)+A_2(b,f)+A_3(b,f),
	\end{align*}
	where we denote $E_Jb = \ave{b}_J1_J.$ It should be understood that the operators $A_i$ depend on the fixed dyadic grid $\calD^2$ even though we omit this detail from the notation. Especially, if our model operators $S^{i,j}$ are defined on the grid $\calD^1\times\calD^2$, then we will expand in the grid $\calD^2.$
	\item If $bf$ is averaged in the second parameter, then we add and subtract $\ave{bf}_J1_J,$
	\allowdisplaybreaks \begin{align*}
	bf1_J = (bf - \ave{bf}_J)1_J + \ave{bf}_J1_J.
	\end{align*}
\end{enumerate}
The proof of Proposition \ref{prop:c1a2} splits into several cases of which some are symmetric; as there are too many cases to present here fully, we go through a proof of each representative case for each model operator after which it is clear how to carry through the remaining cases.

The first step is to establish the boundedness for the auxiliary operators.
\begin{prop}\label{prop:frac:para} Let $1<p<q<\infty$ and $\alpha = d(1/p-1/q).$
	Then,
	\allowdisplaybreaks 
	\begin{align}\label{hhh}
	\Norm{A_i(b,f)}{L^q(\R^d)} \lesssim \Norm{b}{\dot C^{0,\alpha}(\R^d)}\Norm{f}{L^{p}(\R^{d})}.
	\end{align}
\end{prop}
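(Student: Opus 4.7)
The plan is to reduce each $A_i(b,f)$ to a common pointwise bound by the dyadic fractional integral $I_\alpha^d g(x) := \sum_{J \in \calD,\, J \ni x} \ell(J)^\alpha \ave{|g|}_J$, and then to invoke the classical $L^p\to L^q$ boundedness of $I_\alpha^d$. The one H\"older-type ingredient used throughout is the observation that $\Delta_J b = \sum_{J' \in \ch(J)} (\ave{b}_{J'} - \ave{b}_J) 1_{J'}$ together with the fact that any two subcube-averages of $b$ over $J$ differ by at most $\Norm{b}{\dot C^{0,\alpha}(\R^d)}\diam(J)^\alpha$, which yields $\Norm{\Delta_J b}{L^\infty(J)} \lesssim \Norm{b}{\dot C^{0,\alpha}(\R^d)} \ell(J)^\alpha$ for every dyadic $J$.

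The easy cases are $A_1$ and $A_2$. For $A_2(b,f)(x) = \sum_{J \ni x} \Delta_J b(x) \ave{f}_J$ the H\"older bound above plugs in directly to give $|A_2(b,f)(x)| \lesssim \Norm{b}{\dot C^{0,\alpha}(\R^d)} I_\alpha^d|f|(x)$. For $A_1(b,f)(x) = \sum_{J \ni x} \Delta_J b(x) \Delta_J f(x)$ I would additionally use $|\Delta_J f(x)| \leq 2\ave{|f|}_{J'_x}$, where $J'_x \in \ch(J)$ is the child of $J$ containing $x$, which again reduces to $|A_1(b,f)(x)| \lesssim \Norm{b}{\dot C^{0,\alpha}(\R^d)} I_\alpha^d|f|(x)$ after reindexing by scale.

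The main obstacle is $A_3(b,f) = \sum_J E_J b\, \Delta_J f$, since $E_J b = \ave{b}_J$ has no intrinsic decay in $\ell(J)$. My plan is, for fixed $x$, to write
\[
A_3(b,f)(x) = \sum_{k \in \Z} \ave{b}_{J_k(x)}\bigl(\ave{f}_{J_{k-1}(x)} - \ave{f}_{J_k(x)}\bigr),
\]
where $J_k(x)$ is the dyadic ancestor of $x$ of side $2^k$, and to apply Abel summation. The small-scale boundary term recovers $b(x) f(x)$ by Lebesgue differentiation; the large-scale boundary vanishes since $|\ave{b}_{J_N(x)}\ave{f}_{J_N(x)}| \lesssim 2^{N(\alpha - d/p)} \Norm{b}{\dot C^{0,\alpha}(\R^d)}\Norm{f}{L^p(\R^d)} \to 0$ (using $\alpha - d/p = -d/q < 0$); and the telescoped increments satisfy $|\ave{b}_{J_{k+1}(x)} - \ave{b}_{J_k(x)}| \lesssim \Norm{b}{\dot C^{0,\alpha}(\R^d)}\, 2^{(k+1)\alpha}$, producing one more copy of $I_\alpha^d|f|(x)$. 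The residual $b(x)f(x)$ need not lie in $L^q$ on its own, but in the downstream application to $[b, S^{i,j}]$ it is canceled by the matching boundary contribution from the analogous decomposition of $S^{i,j}(bf)$ used to prove Proposition~\ref{prop:c1a2}.

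Finally, the bound $\Norm{I_\alpha^d f}{L^q(\R^d)} \lesssim \Norm{f}{L^p(\R^d)}$ is the classical split-and-optimize: for $f \geq 0$, any $x \in \R^d$ and any truncation level $L > 0$,
\[
I_\alpha^d f(x) \leq L^{\alpha}\mathsf{M}f(x) + L^{\alpha - d/p}\Norm{f}{L^p(\R^d)},
\]
with the first sum handled by the maximal function and the second by H\"older (using $\alpha - d/p < 0$ for summability). Optimizing in $L$ yields $I_\alpha^d f(x) \lesssim \mathsf{M}f(x)^{p/q}\Norm{f}{L^p(\R^d)}^{1-p/q}$, whence the claim follows from the $L^p$-boundedness of $\mathsf{M}$.
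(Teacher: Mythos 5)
Your treatment of $A_1$ and $A_2$ is essentially the paper's: both reduce pointwise to the dyadic fractional sum $\sum_{Q\in\calD}\ell(Q)^{\alpha}\ave{\abs{f}}_Q 1_Q$ via the martingale-difference H\"older bound $\Norm{\Delta_Q b}{L^\infty}\lesssim\Norm{b}{\dot C^{0,\alpha}}\ell(Q)^\alpha$ (or equivalently the paper's $|\langle b-\ave{b}_Q,h_Q\rangle|/|Q|^{1/2}\le\ave{|b-\ave{b}_Q|}_Q$). Where you genuinely diverge is in bounding that fractional sum $L^p\to L^q$: the paper runs a stopping-time sparse decomposition to reduce to the fractional maximal function $\mathsf{M}^\alpha$, whereas you use the Hedberg split-and-optimize $I_\alpha^{\calD}f\lesssim(\mathsf{M}f)^{p/q}\Norm{f}{L^p}^{1-p/q}$. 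Both are correct; yours is arguably more elementary, while the sparse route sets up machinery that the paper reuses later.

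Your observation about $A_3$ is also accurate and worth flagging: the paper's opening pointwise inequality $|A_i(b,f)|\le\sum_Q |Q|^{-1/2}|\langle b,h_Q\rangle|\ave{|f|}_Q 1_Q$ is simply false for $A_3=\sum_Q\ave{b}_Q\Delta_Qf$ (take $b\equiv 1$: the right side vanishes but $\ave{b}_Q=1$), and indeed after Abel summation $A_3(b,f)$ carries a boundary term $-bf$ that cannot lie in $L^q$ for unbounded H\"older $b$. So the proposition as stated should be read as covering only $i\in\{1,2\}$. This is consistent with how the paper actually uses it: in the proofs of Proposition \ref{prop:c1a2} the $A_3$ contributions are never fed into Proposition \ref{prop:frac:para}, but are instead combined across the two sides of the commutator so that the dangerous averages $\ave{b}_{J_1},\ave{b}_{J_2}$ appear only in the difference $\ave{b}_{J_2}-\ave{b}_{J_1}$, which has the required $\ell(V)^{\alpha_2}$ decay --- a more direct cancellation than your Abel-summation-plus-boundary-matching, but the same phenomenon. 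In short: your proof is correct for the cases the paper actually needs, differs in the final fractional-integral step, and correctly diagnoses that the statement is a little too broad as worded.
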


\begin{proof} 
	Let us first estimate 
	\begin{equation}\label{simpleX}
		\begin{split}
		\abs{A_i(b,f)} &\leq \sum_{Q\in\calD} \frac{\abs{\langle b, h_Q\rangle}}{\abs{Q}^{1/2}}\ave{\abs{f}}_Q1_Q = \sum_{Q\in\calD} \frac{\abs{\langle b-\ave{b}_Q, h_Q\rangle}}{\abs{Q}^{1/2}}\ave{\abs{f}}_Q1_Q \\
		&\leq \sum_{Q\in\calD} \ave{\abs{b-\ave{b}_Q}}_Q\ave{\abs{f}}_Q1_Q \leq \Norm{b}{\dot C^{0,\alpha}(\R^d)}\sum_{Q\in\calD} \ell(Q)^{\alpha}\ave{\abs{f}}_Q1_Q.
		\end{split}
	\end{equation}
	Then, we show that the positive operator
	\begin{align}\label{aid1}
		\mathsf{A}^{\alpha}_{\calD}f = \sum_{Q\in\calD} \ell(Q)^{\alpha}\ave{\abs{f}}_Q1_Q
	\end{align}
	satisfies the desired bound.
	Fix a top cube $Q_0\in\calD$ and let $\mathscr{S}\subset \calD(Q_0)$ be the stopping time sparse collection inside the cube $Q_0$ as described in the beginning of Section \ref{sect:super}. By the stopping condition and sparseness of $\mathscr{S},$ we estimate
	\begin{align*}
	\Norm{\mathsf{A}^{\alpha}_{\calD_{Q_0}}f}{L^{q}(\R^d)} &= 	\bNorm{  \sum_{P\in\mathscr{S}}\sum_{\Pi Q = P}\ell(Q)^{\alpha}\ave{\abs{f}}_Q1_Q }{L^{q}(\R^d)} \lesssim \bNorm{  \sum_{P\in\mathscr{S}}\ave{\abs{f}}_P\sum_{\substack{Q\in\calD_{Q_0} \\ Q\subset P}}\ell(Q)^{\alpha}1_Q }{L^{q}(\R^d)} \\
	&\leq \bNorm{  \sum_{P\in\mathscr{S}}\ave{\abs{f}}_P \big(\sum_{k=0}^{\infty}2^{-k\alpha}) \ell(P)^{\alpha}1_P }{L^{q}(\R^d)} \lesssim \bNorm{  \sum_{P\in\mathscr{S}}\ell(P)^{\alpha}\ave{\abs{f}}_P1_P}{L^{q}(\R^d)}  \\
	&\overset{*}{\lesssim} \bNorm{  \sum_{P\in\mathscr{S}}\ell(P)^{\alpha}\ave{\abs{f}}_P1_{E_P}}{L^{q}(\R^d)} =\big(\sum_{P\in\mathscr{S}}\int_{E_P}(\ell(P)^{\alpha}\ave{\abs{f}}_P)^q\big)^{1/q} \\
	&\leq \Norm{\mathsf{M}^{\alpha}f}{L^q(\R^d)} \lesssim \Norm{f}{L^p(\R^d)},
	\end{align*}
	where at the estimate marked with $*$ we used the sparseness of $\mathscr{S}$  to get the norm estimate (for details, see the similar estimate in the proof of Proposition \ref{prop:infty1r2}), and where the boundedness of the fractional maximal operator,
	\allowdisplaybreaks \begin{align*}
	\mathsf{M}^{\alpha}f(x) = \sup_{Q}1_Q(x)\ell(Q)^{\alpha}\fint_Q\abs{f},
	\end{align*}
	where the supremum is taken over all cubes $Q\subset\R^d,$ was used.
	 As the demonstrated bound is independent of the choice of the top cube $Q_0,$ we get the boundedness for $\mathsf{A}^{\alpha}_{\calD}$ and hence \eqref{hhh}.
\end{proof} 

%and again, the last estimate follows from the following estimate (we will refer here later on and hence some extra steps are given)
%\begin{equation}\label{simpleX}
%\begin{split}
%  \sup_{S,P\subset Q}\abs{\ave{b}_P-\ave{b}_S} &\leq \sup_{S,P\subset Q}\ave{\abs{b-\ave{b}_P}}_S \leq  \sup_{\substack{S,P\subset Q}} \fint_S\fint_P\abs{b(x)-b(y)}\ud y \ud x \\
%&\leq \Norm{b}{\dot C^{0,\alpha}(\R^d)}\sup_{\substack{S,P\subset Q}} \fint_S\fint_P\abs{x-y}^{\alpha}\ud y \ud x \\
% &\leq  \Norm{b}{\dot C^{0,\alpha}(\R^d)}\ell(Q)^{\alpha}\sup_{\substack{S,P\subset Q}} \fint_S\fint_P\ud y \ud x = \Norm{b}{\dot C^{0,\alpha}(\R^d)}\ell(Q)^{\alpha},
%\end{split}
%\end{equation}
%where the supremum is taken at least over all cubes $S,P$.

We will also have use of the following fractional Fefferman-Stein inequality, recorded e.g. in \cite{CMN2019}.
\begin{lem}\label{lem:frac:FS} Let $1<p< q<\infty$, $\alpha = d(1/p-1/q)<d,$ and $1<r<\infty.$ Then, there holds that
	\allowdisplaybreaks \begin{align*}
	\bNorm{\big( \sum_k(\mathsf{M}^{\alpha}f_k)^r\big)^{1/r}}{L^q(\R^d)} \lesssim_{d,p,q,r} 	\bNorm{\big( \sum_k \abs{f_k}^r\big)^{1/r}}{L^p(\R^d)}.
	\end{align*}
\end{lem}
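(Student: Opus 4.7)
The plan is to reduce the vector-valued estimate to a scalar weighted bound for $\mathsf{M}^{\alpha}$ via Rubio de Francia extrapolation in the off-diagonal $A_{p,q}$ setting.

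First, I would establish the Muckenhoupt--Wheeden type weighted estimate
\[
\Norm{\mathsf{M}^{\alpha}f}{L^q(w^q)}\lesssim \Norm{f}{L^p(w^p)},\qquad w\in A_{p,q},
\]
where $w\in A_{p,q}$ means $\sup_Q \ave{w^q}_Q^{1/q}\ave{w^{-p'}}_Q^{1/p'}<\infty$. Given the machinery already present in the paper, this is essentially a rerun of Proposition \ref{prop:frac:para}: one dominates $\mathsf{M}^{\alpha}f$ pointwise by a sparse fractional operator $\sum_{Q\in\mathscr{S}}\ell(Q)^{\alpha}\ave{\abs{f}}_Q 1_Q$ (extracted via a Calder\'on--Zygmund stopping time), and the $A_{p,q}$ testing condition together with the sparseness of $\mathscr{S}$ gives the $L^p(w^p)\to L^q(w^q)$ bound by the standard $A_{p,q}$ argument.

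Next, I would invoke the off-diagonal Rubio de Francia extrapolation theorem (in the form due to Harboure--Mac\'ias--Segovia, see Duoandikoetxea's survey): any sublinear operator $T$ satisfying the scalar $A_{p,q}$-weighted bound above automatically lifts to the $\ell^r$-valued inequality
\[
\BNorm{\big(\sum_k \abs{Tf_k}^r\big)^{1/r}}{L^q(\R^d)} \lesssim \BNorm{\big(\sum_k \abs{f_k}^r\big)^{1/r}}{L^p(\R^d)}
\]
for every $r\in(1,\infty)$. Specialising to $T=\mathsf{M}^{\alpha}$ yields the lemma.

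The main obstacle lies in the extrapolation step, whose engine is the construction of an $A_{p,q}$ majorant of the function $(\sum_k\abs{f_k}^r)^{1/r}$ via a Rubio de Francia iteration adapted to the fractional weight class. A tempting shortcut is to combine Hedberg's pointwise inequality $\mathsf{M}^{\alpha}f(x)\lesssim \Norm{f}{L^p}^{1-p/q}(\mathsf{M}f(x))^{p/q}$ with H\"{o}lder in the $k$-summation and the classical vector-valued Fefferman--Stein for $\mathsf{M}$; this reduces cleanly, but one is left with a factor $(\sum_k\Norm{f_k}{L^p}^r)^{1/r}$ which does not in general admit a uniform comparison with $\BNorm{(\sum_k\abs{f_k}^r)^{1/r}}{L^p}$ (the two are comparable in opposite directions depending on whether $p\le r$ or $p\ge r$). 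Extrapolation is precisely what circumvents this obstruction by decoupling the $k$-index from the normalisation used in Hedberg.
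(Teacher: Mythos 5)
The paper does not actually prove Lemma~\ref{lem:frac:FS}; it simply records it and cites \cite{CMN2019}, so there is no ``paper proof'' to compare against. Your proposal is a correct and standard proof. The two ingredients you use -- the Muckenhoupt--Wheeden scalar bound $\mathsf{M}^{\alpha}\colon L^p(w^p)\to L^q(w^q)$ for $w\in A_{p,q}$, followed by off-diagonal Rubio de Francia extrapolation to obtain the $\ell^r$-valued extension for every $r\in(1,\infty)$ -- are exactly the classical route (Harboure--Mac\'ias--Segovia, revisited by Duoandikoetxea and by Cruz-Uribe--Martell--P\'erez), and the scalar step does reuse the sparse/stopping-time structure the paper already exercises in Proposition~\ref{prop:frac:para}. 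Your aside about the Hedberg shortcut is also accurate and worth keeping in mind: applying $\mathsf{M}^{\alpha}f_k\lesssim(\mathsf{M}f_k)^{p/q}\Norm{f_k}{L^p}^{1-p/q}$ to each $f_k$, then H\"older in $k$ and the classical Fefferman--Stein inequality, does yield the claim when $r\ge p$ (because then $(\sum_k\Norm{f_k}{L^p}^r)^{1/r}\le\bNorm{(\sum_k\abs{f_k}^r)^{1/r}}{L^p}$ by reverse Minkowski in $L^{p/r}$), but this comparison reverses for $r<p$, so the shortcut does not cover the full range. Extrapolation is precisely what handles all $r\in(1,\infty)$ uniformly, which matters here since the paper applies the lemma with $r=2$ and arbitrary $1<p<q<\infty$.
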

\begin{rem} Lemma \ref{lem:frac:FS} becomes Fefferman-Stein inequality when $p=q.$
\end{rem}

For the following two lemmas see e.g. \cite{HMV}.
\begin{lem}\label{lem:bound:sf} Let $1<p_1,p_2<\infty.$ Then, there holds that
	\begin{align*}
	\bNorm{\mathsf{S}^1f}{L^{p_1}_{x_1}L^{p_2}_{x_2}} \sim \bNorm{\mathsf{S}^2f}{L^{p_1}_{x_1}L^{p_2}_{x_2}}\sim \bNorm{\mathsf{S}f}{L^{p_1}_{x_1}L^{p_2}_{x_2}} \sim \bNorm{f}{L^{p_1}_{x_1}L^{p_2}_{x_2}},
	\end{align*}
	hold, where 
	\begin{align*}
	\mathsf{S}^if= \Big(\sum_{L\in\calD^i}\abs{\langle f,h_L \rangle}^2\frac{1_L}{\abs{L}}\Big)^{1/2},\quad 	\mathsf{S}f= \Big(\sum_{\substack{I\in\calD^1 \\ J\in\calD^2}}\abs{\langle f,h_I\otimes h_J \rangle}^2\frac{1_{I\times J}}{\abs{I\times J}}\Big)^{1/2}.
	\end{align*}
\end{lem}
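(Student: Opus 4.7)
My plan is to reduce the four-way equivalence to the one-parameter Littlewood--Paley theorem applied in each variable, handling the equivalences $\|f\|\sim\|\mathsf{S}^2 f\|\sim\|\mathsf{S} f\|\sim\|\mathsf{S}^1 f\|$ in that order: the first is purely scalar, the other two need vector-valued square-function estimates.

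For $\|\mathsf{S}^2 f\|_{L^{p_1}_{x_1}L^{p_2}_{x_2}}\sim\|f\|_{L^{p_1}_{x_1}L^{p_2}_{x_2}}$, I would fix $x_1$ and regard $x_2\mapsto f(x_1,x_2)$ as a scalar function on $\R^{d_2}$; by definition $\mathsf{S}^2 f(x_1,\cdot)$ is then exactly its one-parameter Haar square function, so the classical Littlewood--Paley theorem gives $\|\mathsf{S}^2 f(x_1,\cdot)\|_{L^{p_2}}\sim\|f(x_1,\cdot)\|_{L^{p_2}}$ with constants independent of $x_1$. Raising to the $p_1$-th power and integrating in $x_1$ finishes this step.

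For $\|\mathsf{S}^1 f\|_{L^{p_1}_{x_1}L^{p_2}_{x_2}}\sim\|f\|_{L^{p_1}_{x_1}L^{p_2}_{x_2}}$ the square function now sits in the outer variable, so I would view $f$ as an $L^{p_2}(\R^{d_2})$-valued function of $x_1$ and invoke the one-parameter Haar unconditionality in the UMD space $X=L^{p_2}$. Concretely, Khintchine's inequality applied pointwise in $x_2$ (combined with Fubini and Kahane to exchange the Rademacher expectation with the outer $L^{p_1}$-norm) gives
\[
\Norm{\mathsf{S}^1 f}{L^{p_1}_{x_1}L^{p_2}_{x_2}}\sim \Exp_{\eps}\BNorm{\sum_{L\in\calD^1}\eps_L \pair{f}{h_L}\otimes h_L}{L^{p_1}_{x_1}L^{p_2}_{x_2}}\sim \Norm{f}{L^{p_1}_{x_1}L^{p_2}_{x_2}},
\]
the final equivalence being UMD Haar unconditionality for the Haar basis in $L^{p_1}(\R^{d_1};L^{p_2})$.

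For $\|\mathsf{S} f\|\sim\|\mathsf{S}^1 f\|$ I would start from the pointwise identity
\[
\mathsf{S} f(x_1,x_2)^2 = \sum_{I\in\calD^1}\big(\mathsf{S}^2\pair{f}{h_I}(x_2)\big)^2\frac{1_I(x_1)}{|I|},
\]
obtained by grouping the $(I,J)$-sum first in $J$. This exhibits $\mathsf{S} f(x_1,\cdot)$ as an $\ell^2$-norm (weighted by $|I|^{-1/2}$, over $I\ni x_1$) of the scalar functions $\mathsf{S}^2\pair{f}{h_I}$ on $\R^{d_2}$, while $\mathsf{S}^1 f(x_1,\cdot)$ is the analogous $\ell^2$-norm of the sequence $\pair{f}{h_I}$ itself. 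The required uniform-in-$x_1$ equivalence in $L^{p_2}_{x_2}$ is then exactly the $\ell^2$-valued Littlewood--Paley estimate for $\mathsf{S}^2$, a direct Khintchine + Fubini consequence of the scalar theorem (alternatively, the Hilbert-space UMD property of $\ell^2$). Integrating in $x_1$ closes the chain; the only real obstacle is pinning down these two vector-valued Littlewood--Paley statements, but both are classical.
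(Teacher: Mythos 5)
The paper does not supply its own proof of this lemma; it is stated with a bare citation to Hyt\"onen--Martikainen--Vuorinen \cite{HMV}, so there is no in-text argument to compare against. Your reconstruction is correct and uses exactly the standard toolkit one would expect behind such a citation. The chain you set up is sound: for $\mathsf{S}^2$ the square function lives entirely in the inner variable, so freezing $x_1$ and applying scalar Littlewood--Paley in $x_2$ with constants independent of $x_1$, then raising to the $p_1$-th power and integrating, does the job; for $\mathsf{S}^1$ the square function lives in the outer variable, and your Khintchine--Kahane--Fubini passage to $\mathbb{E}_\eps\|T_\eps f\|$ followed by UMD Haar unconditionality of $L^{p_2}$ in $L^{p_1}(\R^{d_1};L^{p_2})$ is the right mechanism (applied to $\eps$ and $\eps^{-1}=\eps$ to get two-sided bounds); and the pointwise identity $\mathsf{S}f(x_1,x_2)^2=\sum_{I}(\mathsf{S}^2\langle f,h_I\rangle(x_2))^2\,1_I(x_1)/|I|$ is verified by Fubini on the $(I,J)$-sum, so that the uniform-in-$x_1$ equivalence between $\|\mathsf{S}f(x_1,\cdot)\|_{L^{p_2}}$ and $\|\mathsf{S}^1f(x_1,\cdot)\|_{L^{p_2}}$ reduces to the $\ell^2$-valued Littlewood--Paley estimate for $\mathsf{S}^2$, which is indeed a double-randomization Khintchine/Fubini consequence of the scalar case (or of $\ell^2$ being UMD). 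The only convention you silently use --- writing a single Haar function $h_L$ per cube when $d_i>1$ --- is the same one the paper explicitly adopts, so no gap there. In short, your proof is correct and fills in a step the paper leaves to a reference.
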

%
%We also use the following bi-parameter Fefferman-Stein inequality.
\begin{lem}\label{lem:FSbp} Let $1<s,t,r<\infty.$ Then, there holds that 
	\begin{align*}
		\bNorm{\big(\sum_{k} \mathsf{M}^1\mathsf{M}^2f_k\big)^{1/r}}{L^s_{x_1}L^t_{x_2}} \lesssim_{s,t,r} 	\bNorm{\big(\sum_{k} \abs{f_k}\big)^{1/r}}{L^s_{x_1}L^t_{x_2}}.
	\end{align*}
\end{lem}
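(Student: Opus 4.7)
The plan is to derive this iterated bi-parameter maximal inequality by applying the classical one-parameter vector-valued Fefferman--Stein maximal inequality successively in each of the two coordinates, exploiting the tensor-product structure $\mathsf{M}^1\mathsf{M}^2 = \mathsf{M}^1 \circ \mathsf{M}^2$.

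First, freezing the variable $x_1$, I would view $(f_k(x_1,\cdot))_k$ as a sequence of scalar functions of $x_2 \in \R^{d_2}$. Applying the one-parameter Fefferman--Stein inequality in $\R^{d_2}$ at exponent $t$ removes the inner maximal operator $\mathsf{M}^2$: the $L^t_{x_2}$-norm of the relevant combination of $(\mathsf{M}^2 f_k(x_1,\cdot))_k$ indicated by the statement is controlled by the analogous combination of $(|f_k(x_1,\cdot)|)_k$, uniformly in $x_1$, with a constant depending only on $t$ and $r$. This gives a pointwise-in-$x_1$ reduction.

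Next, taking the $L^s_{x_1}$-norm of this reduction and applying the one-parameter Fefferman--Stein inequality once more, now in $\R^{d_1}$ at exponent $s$ but in its $L^t_{x_2}$-valued formulation, removes the outer maximal operator $\mathsf{M}^1$ and yields the stated bound. The Banach-space-valued extension to $L^t_{x_2}$-targets is legitimate because $L^t(\R^{d_2})$ with $t \in (1,\infty)$ has the UMD property, which is precisely the condition ensuring the vector-valued Fefferman--Stein inequality for general sequence-type norms.

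The main technical point is the second application in its $L^t_{x_2}$-valued form. A self-contained alternative, entirely in keeping with the tools already used in the paper, is to linearize the sum through independent Rademacher signs $\varepsilon_k$ --- the same device appearing in the remark after Definition~\ref{norm:2} --- invoking the Kahane contraction principle to pass between the sum over $k$ and the Rademacher average, and then reducing the estimate to two iterated scalar maximal bounds, which are routine on $L^s$ and $L^t$ respectively by the standard boundedness of the Hardy--Littlewood maximal function. Either route is straightforward once the vector-valued framework is in place; the hypothesis $1 < s,t,r < \infty$ is exactly what is needed for both iterated Fefferman--Stein applications to apply in their respective spaces.
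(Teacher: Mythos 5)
The paper itself gives no proof of this lemma --- it is quoted from \cite{HMV} --- so your argument has to stand on its own. Your main route (two successive applications of the one-parameter Fefferman--Stein inequality, scalar-valued in $x_2$ and vector-valued in $x_1$) is indeed the standard way such mixed-norm bi-parameter maximal estimates are obtained, but as written there are genuine problems.

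First, the order of the iteration is backwards as described. At a frozen $x_1$ the quantity $\mathsf{M}^1\mathsf{M}^2f_k(x_1,x_2)$ still involves $f_k(y_1,\cdot)$ for all $y_1$ near $x_1$, so you cannot ``remove the inner $\mathsf{M}^2$'' as a first step; you must first peel off the \emph{outer} $\mathsf{M}^1$ by the lattice-valued Fefferman--Stein inequality on $L^s(\R^{d_1};X)$ with $X=L^t(\R^{d_2};\ell^r)$, and only then remove $\mathsf{M}^2$ pointwise in $x_1$. (Your two ingredients can be composed in that order, but the composition is never written down.) Second, and more seriously, you never specify ``the relevant combination,'' and this is exactly where the argument lives or dies. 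As printed the lemma is the $\ell^1$-type statement $\bNorm{(\sum_k\mathsf{M}^1\mathsf{M}^2f_k)^{1/r}}{L^sL^t}\lesssim\bNorm{(\sum_k|f_k|)^{1/r}}{L^sL^t}$, whose one-parameter analogue $\bNorm{\sum_k\mathsf{M}h_k}{L^{t/r}}\lesssim\bNorm{\sum_k|h_k|}{L^{t/r}}$ is \emph{false} for $t/r>1$ (take $h_k=1_{[k,k+1]}$, $k\le N$: the left side gains a factor $\log N$; a tensor-product choice $f_k=1_{[0,1]}\otimes h_k$ transfers this to the bi-parameter statement). What your scheme actually proves, and what is both used in the $*$-step of the shift estimate and stated in \cite{HMV}, is the $\ell^r$-form $\bNorm{(\sum_k(\mathsf{M}^1\mathsf{M}^2f_k)^r)^{1/r}}{L^sL^t}\lesssim\bNorm{(\sum_k|f_k|^r)^{1/r}}{L^sL^t}$; you must make the exponent placement explicit rather than defer to ``the combination indicated by the statement.''

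Two further points. The hypothesis behind the vector-valued step is not that $L^t(\R^{d_2})$ is UMD but that the lattice $L^t(\R^{d_2};\ell^r)$ has the Hardy--Littlewood (lattice maximal) property in the sense of Bourgain and Rubio de Francia; being a UMD lattice is a sufficient condition, not ``precisely the condition.'' Finally, the proposed ``self-contained alternative'' via Rademacher signs does not work: $\mathsf{M}$ is sublinear, so the randomization identity used after Definition~\ref{norm:2} for linear operators is unavailable, and the contraction principle compares $\sum_k\eps_ka_kb_k$ with $\sum_k\eps_kb_k$ for bounded scalars $a_k$ --- it cannot replace $\mathsf{M}f_k$ by $f_k$. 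That paragraph should be deleted; the lattice-valued Fefferman--Stein route is the one to keep.
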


\begin{proof}[Proof of Theorem \ref{prop:c1a2}, part 1/3, shifts:] Let $S^{i,j}$ stand for the model operator 
	\begin{align}\label{asd}
		\big\langle S^{i,j}f,g\big\rangle = \sum_{\substack{K\in\calD^1 \\ I_1^{i_1} = I_2^{i_2} = K }}\sum_{\substack{V\in\calD^2 \\ J_1^{j_1} = J_2^{j_2} = V }}\alpha_{(I)(J)KV}\langle f, h_{I_1\times J_1}\rangle \langle g,  h_{I_2\times J_2}\rangle.
	\end{align}
	By the above described decomposition strategy, we find that the summand (without the scaling factor $\alpha_{(I)(J)KV}$ in front) in \eqref{asd} writes out as
\allowdisplaybreaks \begin{align*}
 	&\big[ \langle f,h_{I_1}\otimes h_{J_1}\rangle \langle bg, h_{I_2}\otimes h_{J_2}\rangle -  \langle bf,h_{I_1}\otimes h_{J_1}\rangle \langle g, h_{I_2}\otimes h_{J_2}\rangle  \big] \\
 	&= \sum_{i=1,2} \langle f,h_{I_1}\otimes h_{J_1}\rangle \langle A_{i}(b,g), h_{I_2}\otimes h_{J_2}\rangle - \sum_{i=1,2}  \langle A_i(b,f),h_{I_1}\otimes h_{J_1}\rangle \langle g, h_{I_2}\otimes h_{J_2}\rangle \\
 	&\qquad\qquad+  \Big[\langle f,h_{I_1}\otimes h_{J_1}\rangle \langle A_3(b,g), h_{I_2}\otimes h_{J_2}\rangle -  \langle A_3(b,f),h_{I_1}\otimes h_{J_1}\rangle \langle g, h_{I_2}\otimes h_{J_2}\rangle\Big].
\end{align*}
The terms with the first four summands are bounded by the mixed norm estimates of bi-parameter model operators and Proposition \ref{prop:frac:para}. Indeed, for the first two terms we use that $A_i(b,\cdot):L^{q'_2}_{x_2}\to L^{p'_2}_{x_2}$ boundedly,
and for the following two terms directly Proposition \ref{prop:frac:para}.
For the bracketed difference on the last line we utilise the cancellation of the commutator, hence writing it out as
\allowdisplaybreaks
\begin{equation}\label{convention}
	\begin{split}
	 &\langle f,h_{I_1}\otimes h_{J_1}\rangle \ave{b}_{J_2}\langle g, h_{I_2}\otimes h_{J_2}\rangle -  \ave{b}_{J_1}\langle f ,h_{I_1}\otimes h_{J_1}\rangle \langle g, h_{I_2}\otimes h_{J_2}\rangle  \\ 
	&= ( \ave{b}_{J_2} - \ave{b}_{J_1}  )\langle f,h_{I_1}\otimes h_{J_1}\rangle\langle g, h_{I_2}\otimes h_{J_2}\rangle.
	\end{split}
\end{equation}
Recall, that we may assume the slice $b(\cdot,x_2):\R^{d_1}\to\C$ to be a constant for all $x_2\in\R^{d_2}.$

Then, similarly as in e.g. \eqref{simpleX}, we estimate  $\abs{\ave{b}_{J_2}-\ave{b}_{J_1}} \leq  \Norm{b(x_1,\cdot)}{\dot C^{0,\alpha_2}_{x_2}}\ell(V)^{\alpha_2}$ for any $x_1\in\R^{d_1}.$ Let us simply notate $\Norm{b(x_1,\cdot)}{\dot C^{0,\alpha_2}_{x_2}} = \Norm{b}{\dot C^{0,\alpha_2}_{x_2}}.$
% and notice that
%$\esssup_{x_1\in\R^{d_1}} \Norm{b(x_1,\cdot)}{\dot C^{0,\alpha_2}_{x_2}} = \Norm{b}{\dot C^{0,\alpha_2}_{x_2}}.$ 
Then, we estimate the remaining part of the commutator,
\allowdisplaybreaks \begin{align*}
&\Babs{\sum_{\substack{K\in\calD^1 \\ I_1^{i_1} = I_2^{i_2} = K }}\sum_{\substack{V\in\calD^2 \\ J_1^{j_1} = J_2^{j_2} = V }}\alpha_{(I)(J)KV}( \bave{b}_{J_2} - \ave{b}_{J_1})\langle f,h_{I_1}\otimes h_{J_1}\rangle\langle g, h_{I_2}\otimes h_{J_2}\rangle} \\
\leq& \Norm{b}{\dot C^{0,\alpha_2}_{x_2}}\int \sum_{\substack{K\in\calD^1\\ V\in\calD^2}} \ell(V)^{\alpha_2}\bave{\abs{\Delta_{K,V}^{i_1,j_1}f}}_{K\times V}\bave{\abs{\Delta_{K,V}^{i_2,j_2}g}}_{K\times V} 1_K\otimes 1_V \\
\leq& \Norm{b}{\dot C^{0,\alpha_2}_{x_2}}\BNorm{\Big( \sum_{\substack{K\in\calD^1\\ V\in\calD^2}} \big(\ell(V)^{\alpha_2}\bave{\abs{\Delta_{K,V}^{i_1,j_1}f}}_{K\times V}\big)^21_K\otimes 1_V \Big)^{1/2}}{L^{p_1}_{x_1}L^{q_2}_{x_2}} \\
&\qquad\qquad\qquad\times \BNorm{\Big( \sum_{\substack{K\in\calD^1\\ V\in\calD^2}} \bave{\abs{\Delta_{K,V}^{i_1,j_1}g}}_{K\times V}^21_K\otimes 1_V \Big)^{1/2}}{L^{p_1'}_{x_1}L^{q_2'}_{x_2}} \\
\lesssim& \Norm{b}{\dot C^{0,\alpha_2}_{x_2}}\Norm{f}{L^{p_1}_{x_1}L^{p_2}_{x_2}}\Norm{g}{L^{p_1'}_{x_1}L^{q_2'}_{x_2}},
\end{align*}
where in the last step we estimate as follows: first, for the fractional term, we note that as
\[
\Big(\ell(V)^{\alpha_2}\bave{\abs{\Delta_{K,V}^{i_1,j_1}f}}_{K\times V}\Big)^21_K\otimes 1_V \lesssim \Big(\mathsf{M}^{\alpha_2}\big( \bave{\abs{\Delta_{K,V}^{i_1,j_1}f}}_{K\times V}1_K\otimes 1_V \big)\Big)^2,
\]
by applying Lemma \ref{lem:frac:FS}, we have
\allowdisplaybreaks \begin{align*}
&\bNorm{\big( \sum_{\substack{K\in\calD^1\\ V\in\calD^2}} \big(\ell(V)^{\alpha_2}\ave{\abs{\Delta_{K,V}^{i_1,j_1}f}}_{K\times V}\big)^2 1_K\otimes 1_V\big)^{1/2}}{L^{p_1}_{x_1}L^{q_2}_{x_2}} \\
&\lesssim\bNorm{\big( \sum_{\substack{K\in\calD^1\\ V\in\calD^2}} \ave{\abs{\Delta_{K,V}^{i_1,j_1}f}}_{K\times V}^21_K\otimes 1_V \big)^{1/2}}{L^{p_1}_{x_1}L^{p_2}_{x_2}} \\ 
&\overset{*}{\lesssim}\bNorm{\big( \sum_{\substack{K\in\calD^1\\ V\in\calD^2}} \abs{\Delta_{K,V}^{i_1,j_1}f}^21_K\otimes 1_V \big)^{1/2}}{L^{p_1}_{x_1}L^{p_2}_{x_2}} \\
&\overset{**}{\leq} \bNorm{\big( \sum_{\substack{K\in\calD^1\\ V\in\calD^2}} \abs{\Delta_{K,V}^{0,0}f}^21_K\otimes 1_V \big)^{1/2}}{L^{p_1}_{x_1}L^{p_2}_{x_2}} \\ 
&\lesssim \bNorm{\mathsf{S}f}{L^{p_1}_{x_1}L^{p_2}_{x_2}} \lesssim \bNorm{f}{L^{p_1}_{x_1}L^{p_2}_{x_2}},
\end{align*}
where the $*$-estimate follows by Lemma \ref{lem:FSbp}, and the $**$-estimate follows as
\begin{align*}
	&\big(\sum_{\substack{K\in\calD^1\\ V\in\calD^2}} \abs{\Delta_{K,V}^{i_1,j_1}f}^21_K\otimes 1_V\big)^{\frac{1}{2}} = \big(\sum_{\substack{K\in\calD^1\\ V\in\calD^2}} \abs{\sum_{\substack{I^{i_1} = K \\ J^{j_1} = V}}\Delta_{I^{i_1},J^{j_1}}^{0,0}f}^21_K\otimes 1_V\big)^{\frac{1}{2}} \\
	&\leq \big(\sum_{\substack{K\in\calD^1\\ V\in\calD^2}}\sum_{\substack{I^{i_1} = K \\ J^{j_1} = V}} \abs{\Delta_{I^{i_1},J^{j_1}}^{0,0}f}^2 1_K\otimes 1_V\big)^{\frac{1}{2}} = \big(\sum_{\substack{K\in\calD^1\\V\in\calD^2}}\abs{\Delta_{K,V}^{0,0}f}^21_K\otimes 1_V\big)^{\frac{1}{2}}.
\end{align*}
The remaining non-fractional term estimates in the same fashion and we leave the details to the reader.
\end{proof}

With partial paraproducts we will use the following side of the classical and well-known $H^1$-$\BMO$ -duality.
\begin{lem}\label{lem:H1-BMO} Let $\calD$ be a dyadic grid. Then, for any arbitrary sequences $(\alpha_Q),(\beta_Q)$ there holds that 
	\begin{align*}
		\sum_{Q\in\calD}\abs{\alpha_Q}\abs{\beta_Q} \lesssim \Norm{(\alpha_Q)}{\BMO}\bNorm{\mathsf{S}_{\calD}(\beta_Q)}{L^1(\R^d)},
	\end{align*}
where,
$$\Norm{(\alpha_Q)}{\BMO} = \sup_{Q_0\in\calD}\frac{1}{\abs{Q_0}^{1/2}}\BNorm{\Big(\sum_{\substack{Q\in\calD \\ Q\subset Q_0}}\abs{\alpha_Q}^2\frac{1_Q}{\abs{Q}}\Big)^{\frac{1}{2}}}{L^2(\R^d)},\quad\mathsf{S}_{\calD}(\beta_Q) = \Big(\sum_{Q\in\calD}\abs{\beta_Q}^2\frac{1_Q}{\abs{Q}}\Big)^{\frac{1}{2}}.$$
\end{lem}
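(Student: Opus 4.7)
The inequality is a dyadic $H^1$-$\BMO$ type duality and my plan is to prove it by a stopping time argument on the level sets of the square function. First I would introduce, for $k \in \Z$, the level sets
\[
\Omega_k = \{x \in \R^d : \mathsf{S}_\calD(\beta_Q)(x) > 2^k\}
\]
together with their enlargements $\wt\Omega_k = \{x : \mathsf{M}^d 1_{\Omega_k}(x) > 1/2\}$, where $\mathsf{M}^d$ is the dyadic maximal operator; by the weak-$(1,1)$ bound for $\mathsf{M}^d$ one gets $|\wt\Omega_k| \lesssim |\Omega_k|$, and if $\calB_k$ denotes the maximal dyadic cubes inside $\wt\Omega_k$, the construction guarantees that any dyadic $Q\not\subseteq \wt\Omega_{k+1}$ satisfies $|Q\setminus \Omega_{k+1}|\geq |Q|/2$.

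Next I would assign to each $Q \in \calD$ a generation $k(Q) = \max\{k : Q \subseteq \wt\Omega_k\}$ and the unique $B(Q) \in \calB_{k(Q)}$ containing it. Splitting the sum according to $(k(Q), B(Q))$ and applying Cauchy-Schwarz in the innermost sum would give
\[
\sum_{Q \in \calD} |\alpha_Q||\beta_Q| \leq \sum_k \sum_{B \in \calB_k} \Big(\sum_{Q \subseteq B} |\alpha_Q|^2\Big)^{1/2} \Big(\sum_{\substack{Q : k(Q)=k \\ B(Q) = B}} |\beta_Q|^2\Big)^{1/2}.
\]
The first factor is at most $\Norm{(\alpha_Q)}{\BMO}|B|^{1/2}$ directly from the $\BMO$ definition. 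For the second, the plan is to use that each $Q$ present in the sum satisfies $|Q \setminus \Omega_{k+1}| \geq |Q|/2$ while pointwise $\mathsf{S}_\calD(\beta_Q)^2 \leq 2^{2(k+1)}$ on $Q \setminus \Omega_{k+1}$; integrating the identity $\sum_R |\beta_R|^2 1_R/|R| \leq \mathsf{S}_\calD(\beta_Q)^2$ over $B \setminus \Omega_{k+1}$, restricted to $R$ of generation $k$ with $B(R)=B$, should produce
\[
\sum_{\substack{Q : k(Q)=k \\ B(Q) = B}} |\beta_Q|^2 \lesssim 2^{2k}|B|.
\]

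Combining both bounds and summing over $k$ and $B$, a layer-cake rewriting would close the estimate:
\[
\sum_Q |\alpha_Q||\beta_Q| \lesssim \Norm{(\alpha_Q)}{\BMO} \sum_k 2^k |\wt\Omega_k| \lesssim \Norm{(\alpha_Q)}{\BMO} \sum_k 2^k |\Omega_k| \sim \Norm{(\alpha_Q)}{\BMO} \bNorm{\mathsf{S}_\calD(\beta_Q)}{L^1(\R^d)}.
\]

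The main obstacle I anticipate is getting the enlargement $\Omega_k \leadsto \wt\Omega_k$ right so as to secure the quantitative lower bound $|Q \setminus \Omega_{k+1}| \geq |Q|/2$: without this, the pointwise bound $\mathsf{S}_\calD(\beta_Q) \leq 2^{k+1}$ off $\Omega_{k+1}$ cannot be upgraded to the $L^2$-style control of $\sum |\beta_Q|^2$ needed above. The remaining ingredients (Cauchy-Schwarz, the $\BMO$ definition, the layer-cake step) are bookkeeping.
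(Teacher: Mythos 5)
The paper does not actually prove Lemma \ref{lem:H1-BMO}; it is invoked as the ``classical and well-known'' dyadic $H^1$--$\BMO$ duality, so there is no in-paper argument to compare against. Your level-set stopping-time proof is the standard Chang--Fefferman-type argument and it is correct: the enlargement $\wt\Omega_{k}=\{\mathsf{M}^d 1_{\Omega_{k}}>1/2\}$ does exactly what you want, since $Q\not\subseteq\wt\Omega_{k+1}$ forces $\langle 1_{\Omega_{k+1}}\rangle_Q\le 1/2$ and hence $|Q\setminus\Omega_{k+1}|\ge|Q|/2$, which converts the pointwise bound $\mathsf{S}_{\calD}(\beta_Q)\le 2^{k+1}$ off $\Omega_{k+1}$ into $\sum_{k(Q)=k,\,B(Q)=B}|\beta_Q|^2\lesssim 2^{2k}|B|$, and the first Cauchy--Schwarz factor is controlled by $\Norm{(\alpha_Q)}{\BMO}|B|^{1/2}$ by definition. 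For completeness you should add the two routine normalizations that make the bookkeeping legitimate: one may assume $\bNorm{\mathsf{S}_{\calD}(\beta_Q)}{L^1}<\infty$ (else there is nothing to prove), which gives $|\Omega_k|\le 2^{-k}\Norm{\mathsf{S}_{\calD}(\beta_Q)}{L^1}<\infty$ and hence both the finiteness of $k(Q)=\max\{k:Q\subseteq\wt\Omega_k\}$ for every $Q$ with $\beta_Q\ne 0$ and the existence of the maximal-cube decomposition $\calB_k$ of $\wt\Omega_k$; cubes with $\beta_Q=0$ are simply discarded.
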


\begin{proof}[Proof of Theorem \ref{prop:c1a2}, part 2/3, partial paraproducts:] We choose the symmetry $i_1=i_2 = 0$ and consider the model operator
	\[
	 \langle S^{(0,0),j}f,g\rangle = \sum_{K\in\calD^1 }\sum_{\substack{V\in\calD^2 \\ J_1^{j_1} = J_2^{j_2} = V }}\alpha_{(J)KV}\langle f, \frac{1_K}{\abs{K}}\otimes h_{J_1}\rangle \langle g,  h_{K}\otimes h_{J_2}\rangle.
	\] 
Writing out the main term, we find out that the summand  (without the scaling factor $\alpha_{(I)(J)KV}$ in front)  in $\langle [b,S^{i,j}]f,g\rangle$ is 
	\allowdisplaybreaks \begin{align*}
	&\Big[ \big\langle f,\frac{1_K}{\abs{K}}\otimes h_{J_1}\big\rangle \big\langle bg, h_K\otimes h_{J_2}\big\rangle -  \big\langle bf,\frac{1_K}{\abs{K}}\otimes h_{J_1}\big\rangle \big\langle g, h_K\otimes h_{J_2}\big\rangle  \Big] \\
	&= \sum_{i=1,2} \big\langle f,\frac{1_K}{\abs{K}}\otimes h_{J_1}\big\rangle \big\langle A_{i}(b,g), h_K\otimes h_{J_2}\big\rangle - \sum_{i=1,2}  \big\langle A_i(b,f),\frac{1_K}{\abs{K}}\otimes h_{J_1}\big\rangle \big\langle g, h_K\otimes h_{J_2}\big\rangle \\
	&\qquad\qquad+  \Big[\big\langle f,\frac{1_K}{\abs{K}}\otimes h_{J_1}\big\rangle \big\langle A_3(b,g), h_K\otimes h_{J_2}\big\rangle -  \big\langle A_3(b,f),\frac{1_K}{\abs{K}}\otimes h_{J_1}\big\rangle \big\langle g, h_K\otimes h_{J_2}\big\rangle\Big].
	\end{align*}
The terms with the first four summands are bounded by the mixed norm estimates of bi-parameter model operators and Lemma \ref{prop:frac:para}, as in the previous case, and the difference on the last line writes out to reduce us to bounding the form
\allowdisplaybreaks \begin{align*}\label{parparform}
	\sum_{\substack{V\in\calD^2 \\ J_1^{j_1} = J_2^{j_2} = V }}\sum_{K\in\calD^1}\alpha_{(J)KV}(\ave{b}_{J_2}-\ave{b}_{J_1})\langle f, \frac{1_K}{\abs{K}}\otimes h_{J_1}\rangle \langle g,  h_{K}\otimes h_{J_2}\rangle.
\end{align*}
Then, as above, we estimate $\abs{\ave{b}_{J_2}-\ave{b}_{J_1}} \leq  \Norm{b}{\dot C^{0,\alpha_2}_{x_2}}\ell(V)^{\alpha_2}$ and this gives the desired factor $\Norm{b}{\dot C^{0,\alpha_2}_{x_2}}$ in front.
It remains to estimate as follows. By Lemma \ref{lem:H1-BMO} and the coefficient size of the partial paraproduct, we find the first estimate in the following, with the rest being straightforward or follow by lemmas \ref{lem:frac:FS} and \ref{lem:bound:sf},
\allowdisplaybreaks \begin{align*}
		&\sum_{\substack{V\in\calD^2 \\ J_1^{j_1} = J_2^{j_2} = V }}\sum_{K\in\calD^1}\abs{\alpha_{(J)KV}\ell(V)^{\alpha_2}\langle f, \frac{1_K}{\abs{K}}\otimes h_{J_1}\rangle \langle g,  h_{K}\otimes h_{J_2}\rangle} \\
		&\lesssim \sum_{\substack{V\in\calD^2 \\ J_1^{j_1} = J_2^{j_2} = V }} \frac{\abs{J_1}^{1/2}\abs{J_2}^{1/2}}{\abs{V}}\ell(V)^{\alpha_2}\bNorm{\big(\sum_{K\in\calD^1} \abs{\langle f, \frac{1_K}{\abs{K}}\otimes h_{J_1}\rangle \langle g,  h_{K}\otimes h_{J_2}\rangle}^2\frac{1_K}{\abs{K}}\big)^{\frac{1}{2}}}{L^1(\R^{d_1})} \\
		&\leq \sum_{\substack{V\in\calD^2 \\ J_1^{j_1} = J_2^{j_2} = V }} \frac{\abs{J_1}^{1/2}\abs{J_2}^{1/2}}{\abs{V}}\ell(V)^{\alpha_2}  \int_{\R^{d_1}} \mathsf{M}^1(\langle f, h_{J_1}\rangle)\mathsf{S}^1(\langle g,h_{J_2}\rangle) \\
		&= \int_{\R^{d_1}} \sum_{\substack{V\in\calD^2 \\ J_1^{j_1} = J_2^{j_2} = V }} \frac{\abs{J_1}^{1/2}\abs{J_2}^{1/2}}{\abs{V}}\ell(V)^{\alpha_2} \mathsf{M}^1\big( \int_{J_1}\Delta_V^{j_1}f h_{J_1}\big)\mathsf{S}^1\big( \int_{J_2} \Delta_V^{j_2}g h_{J_2}\big) \\
		&\leq  \int_{\R^{d_1}} \sum_{\substack{V\in\calD^2 \\ J_1^{j_1} = J_2^{j_2} = V }} \frac{\ell(V)^{\alpha_2}}{\abs{V}} \int_{J_1}  \mathsf{M}^1\Delta_V^{j_1}f\int_{J_2} \mathsf{S}^1\Delta_V^{j_2} g \\
		&= \int_{\R^{d_1}} \sum_{V\in\calD^2 } \frac{\ell(V)^{\alpha_2}}{\abs{V}} \int_{V}  \mathsf{M}^1\Delta_V^{j_1}f\int_{V} \mathsf{S}^1\Delta_V^{j_2} g \\
		&= \int_{\R^{d_1}}\int_{\R^{d_2}} \sum_{V\in\calD^2 } \ell(V)^{\alpha_2} \big\langle \mathsf{M}^1\Delta_V^{j_1}f\big\rangle_V\big\langle \mathsf{S}^1\Delta_V^{j_2} g \big\rangle_V 1_V \\
		&\leq \BNorm{ \Big( \sum_{V\in\calD^2} \big(\ell(V)^{\alpha_2} \big\langle \mathsf{M}^1\Delta_V^{j_1}f\big\rangle_V\big)^21_V \Big)^{\frac{1}{2}}}{L^{p_1}_{x_1}L^{q_2}_{x_2}}\BNorm{\Big(\sum_{V\in\calD^2}\big\langle\mathsf{S}^1\Delta_V^{j_2} g \big\rangle_V^21_V\Big)^{\frac{1}{2}}}{L^{p_1'}_{x_1}L^{q_2'}_{x_2}} \\
		&\leq  \BNorm{ \Big( \sum_{V\in\calD^2} \big(\mathsf{M}^{\alpha_2}\big(\mathsf{M}^1\Delta_V^{j_1}f1_V\big)\big)^2 \Big)^{\frac{1}{2}}}{L^{p_1}_{x_1}L^{q_2}_{x_2}}\BNorm{\Big(\sum_{V\in\calD^2}\big(\mathsf{M}^2\big(\mathsf{S}^1\Delta_V^{j_2} g1_V\big) \big)^2\Big)^{\frac{1}{2}}}{L^{p_1'}_{x_1}L^{q_2'}_{x_2}} \\
		&\lesssim  \bNorm{ \big( \sum_{V\in\calD^2} \abs{\Delta_V^{j_1}f}^2 \big)^{\frac{1}{2}}}{L^{p_1}_{x_1}L^{p_2}_{x_2}}\bNorm{\big(\sum_{V\in\calD^2}\big(\mathsf{S}^1\Delta_V^{j_2}g  \big)^2\big)^{\frac{1}{2}}}{L^{p_1'}_{x_1}L^{q_2'}_{x_2}} \\
		&\lesssim  \Norm{\mathsf{S}^2f}{L^{p_1}_{x_1}L^{p_2}_{x_2}}\Norm{\mathsf{S}g}{L^{p_1'}_{x_1}L^{q_2'}_{x_2}} \\  
		&\lesssim  \Norm{f}{L^{p_1}_{x_1}L^{p_2}_{x_2}}\Norm{g}{L^{p_1'}_{x_1}L^{q_2'}_{x_2}}.
\end{align*}
\end{proof}
\begin{proof}[Proof of Theorem \ref{prop:c1a2}, part 3/3, full paraproducts:]  Now, let $i=j=(0,0)$ and we consider the paraproduct
	\[
	\langle S^{(0,0),(0,0)}f,g\rangle = \sum_{K\in\calD^1 }\sum_{V\in\calD^2 }\alpha_{KV}\ave{f}_{K\times V} \langle g,  h_{K}\otimes h_V\rangle.
	\] 
Writing out the main term we find out that the summand (without the scaling factor $\alpha_{(I)(J)KV}$ in front) in $\langle [b,S^{i,j}]f,g\rangle$ is 
\allowdisplaybreaks \begin{align*}
&\big[\ave{f}_{K\times V} \langle bg, h_K\otimes h_V\rangle -  \ave{bf}_{K\times V}\langle g, h_K\otimes h_V \rangle  \big] \\
&= \sum_{i=1,2} \ave{f}_{K\times V} \langle A_i(b,g), h_K\otimes h_V\rangle +  \big[ \ave{f}_{K\times V}\langle A_3(b,g), h_K\otimes h_V \rangle -   \ave{bf}_{K\times V}\langle g, h_K\otimes h_V \rangle\big]. 
\end{align*}
The terms with the first two summands are bounded by the mixed norm estimates of bi-parameter model operators and Lemma \ref{prop:frac:para}, as before, and the bracketed difference on the last line writes out to reduce us to bounding the form
\allowdisplaybreaks \begin{align*}\label{fpboot}
	 \sum_{K\in\calD^1 }\sum_{V\in\calD^2 }\alpha_{KV}\bave{(\ave{b}_V-b)f}_{K\times V} \langle g,  h_{K}\otimes h_V\rangle.
\end{align*}
This is bounded by the known boundedness of the model operator and $\mathsf{M}^{\alpha_2}$ and the observation that 
\begin{align*}
	\babs{\bave{(\ave{b}_V-b)f}_{K\times V}} &\leq\bave{\abs{b-\ave{b}_V}\abs{f}}_{K\times V} \leq\Norm{b}{\dot C^{0,\alpha_2}_{x_2}(\R^{d_2})}\ave{\ell(V)^{\alpha_2}\abs{f}}_{K\times V}  \\ 
&=  \Norm{b}{\dot C^{0,\alpha_2}_{x_2}(\R^{d_2})}\ave{\ell(V)^{\alpha_2}\ave{\abs{f}}_V}_{K\times V}  \leq 	 \Norm{b}{\dot C^{0,\alpha_2}_{x_2}(\R^{d_2})}\ave{\mathsf{M}^{\alpha_2}f}_{K\times V}.
\end{align*} 

Now consider the commutator taken with the other paraproduct with the summands being $$\langle f, h_K\otimes\frac{1_V}{\abs{V}} \rangle \langle bg,  \frac{1_K}{\abs{K}}\otimes h_{V}\rangle-\langle bf, h_K\otimes\frac{1_V}{\abs{V}} \rangle \langle g,  \frac{1_K}{\abs{K}}\otimes h_{V}\rangle.$$ Again, going through with our decomposition strategy, we reduce to the operator that originates as a difference through the auxiliary operator $A_3,$
\[
\sum_{K\in\calD^1 }\sum_{V\in\calD^2 }\alpha_{KV}\big\langle (\ave{b}_V-b)f, h_K\otimes\frac{1_V}{\abs{V}} \big\rangle \langle g,  \frac{1_K}{\abs{K}}\otimes h_{V}\rangle.
\]
Again, this is bounded by the known boundedness of the model operator and the following observations, we have 
\begin{align*}
\babs{\big\langle (\ave{b}_V-b)f, h_K\otimes\frac{1_V}{\abs{V}} \big\rangle} &\leq \big\langle \abs{\ave{b}_V-b}\abs{\langle f,h_K \rangle}h_K, h_K\otimes\frac{1_V}{\abs{V}} \big\rangle \\
&\leq  \Norm{b}{\dot C^{0,\alpha_2}_{x_2}(\R^{d_2})} \big\langle \ell(V)^{\alpha_2}\abs{\langle f,h_K \rangle}h_K, h_K\otimes\frac{1_V}{\abs{V}} \big\rangle \\
&=  \Norm{b}{\dot C^{0,\alpha_2}_{x_2}(\R^{d_2})} \Big\langle \ell(V)^{\alpha_2}\big\langle\abs{\langle f,h_K \rangle}\big\rangle_Vh_K, h_K\otimes\frac{1_V}{\abs{V}} \Big\rangle\\
&=  \Norm{b}{\dot C^{0,\alpha_2}_{x_2}(\R^{d_2})} \Big\langle \sum_{L\in\calD^1}\ell(V)^{\alpha_2}\big\langle \abs{\langle f,h_L \rangle}\big\rangle_Vh_L, h_K\otimes\frac{1_V}{\abs{V}} \Big\rangle \\
&\leq  \Norm{b}{\dot C^{0,\alpha_2}_{x_2}(\R^{d_2})} \big\langle \sum_{L\in\calD^1}h_L\otimes \mathsf{M}^{\alpha_2}\langle f,h_L\rangle, h_K\otimes\frac{1_V}{\abs{V}} \big\rangle,
\end{align*}
and this time we are done as soon as we show that $\Phi f = \sum_{L\in\calD^1}h_L\otimes \mathsf{M}^{\alpha_2}\langle f,h_L\rangle$ satisfies the correct bound. For this, by duality it is enough to estimate as follows
\begin{align*}
	\abs{\langle \Phi f,g\rangle} &\leq \BNorm{\Big(\sum_{L\in\calD^1}(\mathsf{M}^{\alpha_2}\langle f,h_L\rangle)^2\frac{1_L}{\abs{L}}\Big)^{1/2}}{L^{p_1}_{x_2}L^{q_2}_{x_2}}\BNorm{\Big(\sum_{L\in\calD^1}\abs{\langle g,h_L\rangle}^2\frac{1_L}{\abs{L}}\Big)^{1/2}}{L^{p_1'}_{x_2}L^{q_2'}_{x_2}} \\
	&\lesssim \BNorm{\Big(\sum_{L\in\calD^1}\abs{\langle f,h_L\rangle}^2\frac{1_L}{\abs{L}}\Big)^{1/2}}{L^{p_1}_{x_2}L^{p_2}_{x_2}}\BNorm{\Big(\sum_{L\in\calD^1}\abs{\langle g,h_L\rangle}^2\frac{1_L}{\abs{L}}\Big)^{1/2}}{L^{p_1'}_{x_2}L^{q_2'}_{x_2}} \\
	&=  \bNorm{\mathsf{S}^1f}{L^{p_1}_{x_2}L^{p_2}_{x_2}}\bNorm{\mathsf{S}^1g}{L^{p_1'}_{x_2}L^{q_2'}_{x_2}} \lesssim \bNorm{f}{L^{p_1}_{x_2}L^{p_2}_{x_2}}\bNorm{g}{L^{p_1'}_{x_2}L^{q_2'}_{x_2}},
\end{align*}
where we again used lemmas \ref{lem:frac:FS} and \ref{lem:bound:sf}.

\end{proof}

\section{Upper bound for the case $p_1<q_1,$ $p_2=q_2$}\label{sect:last}
To treat this case, it is better to work with an alternative definition of bi-parameter CZOs.
By Grau de la Herrán \cite{Grau2016} an equivalent way to defining bi-parameter Calderón-Zygmund operators as by Martikainen \cite{Ma1} is the one by Journé \cite{Jo}. The definition of Martikainen follows quickly from Journe's and the main result in \cite{Grau2016} is the reverse direction.

\begin{defn}[Journé]\label{defn:SIO:jour} A pair $K =(K_1,K_2)$ of kernels is said to be a bi-parameter $\CZ$-kernel if for $j\in\{1,2\}$ and $i\in\{1,2\}\setminus \{j\}$ the kernels map
 $$
 K_j(x_i,y_i):\R^{d_i}\times\R^{d_i}\setminus\Delta \to \CZO(d_j,\delta),
 $$
 satisfy the bounds
 \allowdisplaybreaks \begin{align*}
 	\bNorm{K_j(x_i,y_i)}{\CZO(d_j,\delta)} \leq C\abs{x_i-y_i}^{-d_i},
 \end{align*}
 and 
 \allowdisplaybreaks \begin{align*}
 \bNorm{K_j(x_i, y_i) - K_j(x_i', y_i)}{\CZO(d_j,\delta)} +  \bNorm{K_j(y_i, x_i) - K_j(y_i, x_i')}{\CZO(d_j,\delta)}  \leq C\frac{|x_i-x_i'|^{\delta}}{|x_i-y_i|^{d_i+\delta}},
 \end{align*}
 whenever $\abs{x_i-x_i'}\leq 1/2\abs{x_i-y_i}.$ 
% The best constant in these estimates is denoted with $\Norm{K_j}{\CZO(d_j,\delta)}.$
 
 An operator $T$ with a bi-parameter $\CZ$-kernel is said to be a bi-parameter SIO if for $i\in\{1,2\}$ and $j\in \{1,2\}\setminus \{ i\}$ we have
\allowdisplaybreaks \begin{align*}
		\big\langle  T(f_1\otimes f_2),g_1\otimes g_2 \big\rangle = \int_{\R^{d_j}}\int_{\R^{d_j}}\langle K_i(x_j,y_j)f_i,g_i\rangle f_j(y_j)g_j(x_j)\ud y_j\ud x_j,
\end{align*}
 whenever $\supp(f_j)\cap\supp(g_j)=\emptyset$ and  $f_k,g_k\in \Sigma_k$ for $k\in\{i,j\}.$
 
The dual $T^{1*}$ of $T$ is given by the identity $\big\langle T^{1*} (f_1\otimes f_2),g_1\otimes g_2\big\rangle = \big\langle T(g_1\otimes f_2),f_1\otimes g_2\big\rangle.$ It is straightforward to see that $T^{1*}$ is a bi-parameter SIO if $T$ is and that the kernels of $T^{1*}$ are given by $K_1^{1*}(x_2,y_2) = K_1^*(x_2,y_2)$ and $K_2^{1*}(x_1,y_1) = K_2(x_1,y_1).$
\end{defn}
\begin{defn}A bi-parameter SIO as in Definition \ref{defn:SIO:jour} is a bi-parameter CZO if $T$ and $T^{1*}$ are bounded on $L^2(\R^d).$
%	\allowdisplaybreaks \begin{align*}
%	\Norm{T}{\CZO((d_1,d_2),\delta)} = \Norm{T}{L^2(\R^d)\to L^2(\R^d)} + \Norm{T^{1*}}{L^2(\R^d)\to L^2(\R^d)} + \sum_{j=1,2}\Norm{K_j}{\CZO(d_j,\delta)}.
%	\end{align*}
\end{defn}

The advantage with this setup is that we can now easily prove the following.
\begin{lem}\label{lem:pvrep} Let $T$ be a bi-parameter CZO. Suppose that  $b(x_1,\cdot) = constant$ and $b(\cdot,x_2)\in L^{\infty}_{\loc,x_1}.$
Then, for all $f,g\in\Sigma$ we have
	\begin{align*}
		\big\langle [b,T]f,g\big\rangle = \int_{\R^{d_1}}\int_{\R^{d_1}}(b(x_1)-b(y_1))\big\langle K_2(x_1,y_1)f(y_1,\cdot),g(x_1,\cdot)\big\rangle \ud y_1\ud x_1,
	\end{align*}
where we denote $b(x_1) = b(x_1,v)$ for some choice of $v\in\R^{d_2}.$
\end{lem}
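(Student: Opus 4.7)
The plan is to ``fuberize'' the identity: fix the second-variable factors of $f$ and $g$, treat the first-variable action of $T$ as a one-parameter CZO on $\R^{d_1}$, and then appeal to the standard one-parameter kernel representation of a commutator.

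By bilinearity in $f$ and $g$ and since $\Sigma$ is spanned by simple tensors, it suffices to treat $f = f_1 \otimes f_2$ and $g = g_1 \otimes g_2$ with $f_i, g_i \in \Sigma_i$. Fixing $f_2, g_2 \in \Sigma_2$, I would define the one-parameter bilinear form
$$\langle S h_1, h_2\rangle := \langle T(h_1 \otimes f_2), h_2 \otimes g_2\rangle, \qquad h_1, h_2 \in \Sigma_1.$$
Journ\'e's form of the bi-parameter kernel (Definition \ref{defn:SIO:jour}) gives that $S$ is a one-parameter SIO on $\R^{d_1}$ with standard $\CZ(d_1,\delta)$-kernel $K_S(x_1,y_1) = \langle K_2(x_1,y_1) f_2, g_2\rangle$, and the $L^2(\R^{d_1})$-boundedness of $S$ (and of its adjoint) follows at once from the $L^2(\R^d)$-boundedness of $T$ and $T^{1*}$ together with the tensor-product $L^2$-norm identity; so $S$ is a one-parameter CZO.

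Next, since $b$ depends only on $x_1$, multiplication by $b$ preserves the tensor structure in the first slot: $b\cdot(h_1 \otimes f_2) = (bh_1)\otimes f_2$, with $bh_1 \in L^\infty_c \subset L^2(\R^{d_1})$ by the local boundedness of $b$ and compact support of $h_1$. Using the $L^2$-continuous extension of $T$ (hence of $S$) from $\Sigma_1$ to $L^\infty_c$, I would obtain
$$\langle [b,T](f_1 \otimes f_2), g_1 \otimes g_2\rangle = \langle Sf_1, bg_1\rangle - \langle S(bf_1), g_1\rangle = \langle [b,S] f_1, g_1\rangle,$$
reducing the claim to the one-parameter kernel identity
$$\langle [b,S] f_1, g_1\rangle = \int_{\R^{d_1}} \int_{\R^{d_1}} (b(x_1) - b(y_1)) K_S(x_1, y_1) f_1(y_1) g_1(x_1) \, \ud y_1 \, \ud x_1$$
for $f_1, g_1 \in \Sigma_1$ and $b \in L^\infty_{\loc}(\R^{d_1})$.

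For this one-parameter identity, when $\supp(f_1) \cap \supp(g_1) = \emptyset$ the formula is immediate from the off-support kernel representation of $S$ applied separately to $\langle Sf_1, bg_1\rangle$ and $\langle S(bf_1), g_1\rangle$. For the overlapping-support case I would further reduce by bilinearity to $f_1 = 1_Q$, $g_1 = 1_R$ for cubes $Q, R \subset \R^{d_1}$, and decompose $1_Q = 1_{Q \setminus R} + 1_{Q \cap R}$ (and symmetrically $1_R$) to isolate the genuinely diagonal contribution $\langle [b,S] 1_{Q \cap R}, 1_{Q \cap R}\rangle$. This diagonal piece is the main obstacle: the integrand has the non-integrable $|x_1-y_1|^{-d_1}$ singularity and $b$ is only locally bounded, so the double integral must be read as the principal value $\lim_{\eps \downarrow 0} \int\int_{|x_1-y_1|>\eps}$, and its convergence and identification with $\langle [b,S] 1_{Q \cap R}, 1_{Q \cap R}\rangle$ follow from the standard one-parameter truncation argument, exploiting the Calder\'on--Zygmund regularity of $K_S$ and the cancellation inherent in the commutator.
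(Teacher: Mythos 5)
Your reductions --- pass to simple tensors $f=f_1\otimes f_2$, $g=g_1\otimes g_2$; define the one-parameter operator $S$ (the paper's $T_{f_2,g_2}$) with kernel $K_S(x_1,y_1)=\langle K_2(x_1,y_1)f_2,g_2\rangle$; use that $b$ depends only on $x_1$ to obtain $\langle [b,T]f,g\rangle = \langle [b,S]f_1,g_1\rangle$ --- are all exactly as in the paper. The gap is in the one step that remains, the one-parameter identity
$\langle [b,S]f_1,g_1\rangle = \iint (b(x_1)-b(y_1))K_S(x_1,y_1)f_1(y_1)g_1(x_1)\,\ud y_1\ud x_1$,
which you propose to settle on the diagonal piece $\langle [b,S]1_{Q\cap R},1_{Q\cap R}\rangle$ via ``the standard one-parameter truncation argument, exploiting \dots the cancellation inherent in the commutator.'' That phrase is doing all the work, and it is not yet an argument: the truncated integrals $\int_{|x_1-y_1|>\eps}K_S(x_1,y_1)h(y_1)\,\ud y_1$ do \emph{not} converge to $Sh$; they converge (along a subsequence $\eps_k\downarrow 0$) to $Sh - mh$ for some $m\in L^{\infty}(\R^{d_1})$. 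The reason the commutator nonetheless only sees the principal-value part is the identity $[b,m]=0$ for a multiplication operator, so that $[b,S]=[b,S-m]=[b,\wt{S}]$ with $\wt{S}$ given by the truncated limits. The paper obtains the $T=m+\wt{T}$ structure from one citation (Grafakos, Proposition 4.1.11) and then finishes with dominated convergence; without pinning down the $L^{\infty}$ defect $m$ and observing $[b,m]=0$, your ``truncation argument'' is essentially a restatement of the claim rather than a proof of it. You should either reproduce or cite the principal-value-plus-bounded-function structure of a one-parameter CZO and state explicitly that the bounded multiplication part drops out of the commutator.

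A smaller remark: your observation that the double integral is not absolutely convergent when $b$ is merely $L^{\infty}_{\loc}$ is well taken --- the paper's dominated-convergence step in fact invokes a $\dot C^{0,\alpha}$ bound on $b$, which is what holds in the eventual application (Proposition \ref{prop:a1c2}) but is stronger than the $L^{\infty}_{\loc}$ hypothesis the lemma states. So the lemma's stated hypothesis and the argument given for it do not quite match, and you were right to flag the convergence issue.
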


\begin{proof} We first consider the one-parameter setting with the one-parameter space $\R^n$ and let $b\in L^{\infty}_{\loc}(\R^n).$
	 It is a basic part of the one-parameter theory (see e.g. Grafakos \cite{GrafMFA}, Proposition 4.1.11.) that for each $T\in \CZO(n,\delta)$ there exists $\wt{T}\in \CZO(n,\delta)$ and a function $m\in L^{\infty}$ so that
	\[
	(T-m)h = \wt{T}h,\qquad \widetilde{T}h(x) = \lim_{k\to \infty}\int_{\abs{x-y}>\varepsilon_k}K(x,y)h(y)\ud y
	\]
	where $K$ is the kernel of $T$ and the limit holds along some sequence $\varepsilon_k\to 0$ and for all bounded and compactly supported functions $h.$
	
	The above immediately gives the following: suppose that $b\in L^{\infty}_{\loc}(\R^n)$ and $f,g\in \Sigma_n,$ so that
	\begin{equation}\label{aid2}
		\begin{split}
			\big\langle [b,T]f,g	\big\rangle &= 	\big\langle [b,T-m]f,g	\big\rangle = 	\big\langle [b,\widetilde{T}]f,g	\big\rangle \\
		&= \int_{\R^n}\lim_{\varepsilon_k\to 0}\int_{\abs{x-y}>\varepsilon_k}(b(x)-b(y))K(x,y)f(y)g(x)\ud y\ud x \\
		&= \int_{\R^n}\int_{\R^n}(b(x)-b(y))K(x,y)f(y)g(x)\ud y\ud x.
		\end{split}
	\end{equation}
	The last step follows by the dominated convergence theorem after the following estimate uniform in $\varepsilon_k,$
	\begin{equation*}
			\int\abs{(b(x)-b(y))K(x,y)f(y)}\ud y \lesssim \Norm{b}{\dot C^{0,\alpha}(\R^d)} \int_{\R^d}\abs{x-y}^{\alpha-d}\abs{f(y)}\ud y;
	\end{equation*}
	since $f$ is bounded and compactly supported, we see that the right-hand side is finite.
	Now with this one-parameter result at hand, we turn to the claim itself. 
	
	By linearity it is enough to prove the claim for functions $f = f_1\otimes f_2$ and $g = g_1\otimes g_2$ of tensor form. If $T$ is an SIO  as by Journé, then the size estimate
	\[
	\abs{\langle K_2(x_1,y_1)f_2,g_2\rangle} \lesssim \abs{x_1-y_1}^{d_1}\Norm{f_2}{L^p}\Norm{g_2}{L^{p'}}
	\]
	is satisfied, and similarly immediately from the definitions the regularity estimates also hold.
	Consequently, since $T$ is bounded, the function 
	$(x_1,y_1)\mapsto \langle K_2(x_1,y_1)f_2,g_2\rangle$ is a kernel of the one-parameter CZO defined by
	\begin{align*}
		\langle T_{f_2,g_2}f_1,g_1\rangle = \langle T(f_1\otimes f_2),g_1\otimes g_2\rangle.
	\end{align*} 
	Then, it follows by the one-parameter result \eqref{aid2} that
	\begin{align*}
		 	\big\langle [b(\cdot,v),T_{f_2,g_2}]f_1,g_1\big\rangle = \int_{\R^{d_1}}\int_{\R^{d_1}} (b(x_1,v)-b(y_1,v))\big\langle K_2(x_1,y_1)f_2,g_2\big\rangle f_1(y_1)g_1(x_1)\ud y_1\ud x_1,
	\end{align*}
	where we note that $b(\cdot,v)\in L^{\infty}_{\loc,x_1}.$
	However, we also have
	\begin{align*}
		\big\langle [b,T](f_1\otimes f_2), g_1\otimes g_2\big\rangle &= \big\langle b(\cdot,v)T(f_1\otimes f_2) - T(b(\cdot,v)(f_1\otimes f_2),g_1\otimes g_2\big\rangle \\
		&= \big\langle T_{f_2,g_2}f_1,b(\cdot,v)g_1\big\rangle - \big\langle T_{f_2,g_2}(b(\cdot,v)f_1),g_1\big\rangle \\
		&= \big\langle [b(\cdot,v),T_{f_2,g_2}]f_1, g_1\big\rangle, 
	\end{align*}
	and thus the claim follows.
\end{proof}

\begin{prop}\label{prop:a1c2} Let $p_1<q_1$ and $p_2=q_2,$ let $T$ be a bi-parameter CZO and suppose that $b(x_1,\cdot)=constant$ and $b(\cdot,x_2) \in \dot C^{0,\alpha_1}_{x_1}.$ Then, we have 
	\[
	\Norm{[b,T]f}{L^{q_1}_{x_1}L^{p_2}_{x_2}} \lesssim 	\Norm{b(\cdot,x_2)}{\dot C^{0,\alpha}_{x_1}}\Norm{f}{L^{p_1}_{x_1}L^{p_2}_{x_2}}.
	\]
\end{prop}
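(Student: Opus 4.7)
The plan is to invoke Lemma~\ref{lem:pvrep} to reduce the commutator to a single integral against the operator-valued kernel $K_2(x_1,y_1)$, then to dominate the resulting bilinear form by a Riesz potential acting in the $x_1$ variable on the $L^{p_2}_{x_2}$-norm of $f$; the claim then follows from the Hardy--Littlewood--Sobolev inequality and duality. This is a much shorter route than the model-operator decomposition used in Proposition~\ref{prop:c1a2}, and it is available here precisely because $b$ is constant in $x_2$, so the $x_2$-parameter of $T$ plays only the role of a frozen operator-valued symbol.

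Concretely, since $b(x_1,\cdot)$ is constant and $b(\cdot,x_2)\in\dot C^{0,\alpha_1}_{x_1}\subset L^{\infty}_{\loc,x_1}$, Lemma~\ref{lem:pvrep} applies and yields, for $f,g\in\Sigma$,
\begin{align*}
\langle [b,T]f,g\rangle = \int_{\R^{d_1}}\int_{\R^{d_1}} (b(x_1)-b(y_1))\langle K_2(x_1,y_1)f(y_1,\cdot),g(x_1,\cdot)\rangle\ud y_1\ud x_1.
\end{align*}
I would then combine two ingredients. First, the H\"older estimate
\[
|b(x_1)-b(y_1)|\le \Norm{b(\cdot,x_2)}{\dot C^{0,\alpha_1}_{x_1}}|x_1-y_1|^{\alpha_1}.
\]
Second, by Definition~\ref{defn:SIO:jour} the operator $K_2(x_1,y_1)\in \CZO(d_2,\delta)$ with $\CZO$-norm bounded by $C|x_1-y_1|^{-d_1}$; in particular its $L^{p_2}\to L^{p_2}$ norm is controlled by the same quantity, so
\[
|\langle K_2(x_1,y_1)f(y_1,\cdot),g(x_1,\cdot)\rangle|\lesssim |x_1-y_1|^{-d_1}\Norm{f(y_1,\cdot)}{L^{p_2}_{x_2}}\Norm{g(x_1,\cdot)}{L^{p_2'}_{x_2}}.
\]

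Setting $F(y_1):=\Norm{f(y_1,\cdot)}{L^{p_2}_{x_2}}$ and $G(x_1):=\Norm{g(x_1,\cdot)}{L^{p_2'}_{x_2}}$, the two estimates combine to
\begin{align*}
|\langle [b,T]f,g\rangle|\lesssim \Norm{b(\cdot,x_2)}{\dot C^{0,\alpha_1}_{x_1}}\int_{\R^{d_1}} G(x_1)\int_{\R^{d_1}}|x_1-y_1|^{\alpha_1-d_1}F(y_1)\ud y_1\ud x_1.
\end{align*}
The inner integral is the Riesz potential of order $\alpha_1$ applied to $F$, and since $\alpha_1=d_1(1/p_1-1/q_1)$, Hardy--Littlewood--Sobolev maps it into $L^{q_1}(\R^{d_1})$ with norm controlled by $\Norm{F}{L^{p_1}(\R^{d_1})}=\Norm{f}{L^{p_1}_{x_1}L^{p_2}_{x_2}}$. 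H\"older against $G$ in the $x_1$ variable, duality, and the density of $\Sigma$ then give $\Norm{[b,T]f}{L^{q_1}_{x_1}L^{p_2}_{x_2}}\lesssim \Norm{b(\cdot,x_2)}{\dot C^{0,\alpha_1}_{x_1}}\Norm{f}{L^{p_1}_{x_1}L^{p_2}_{x_2}}$.

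I do not expect a substantial obstacle here: once Lemma~\ref{lem:pvrep} is in place, the argument reduces to H\"older regularity of $b$ in the first parameter, a uniform operator bound on $K_2(x_1,y_1)$ in the second, and classical Hardy--Littlewood--Sobolev in the first. The only minor point to verify is that the $\CZO$-norm bound for $K_2(x_1,y_1)$ transfers into the desired $L^{p_2}\to L^{p_2}$ bound with the same quantitative $|x_1-y_1|^{-d_1}$ dependence, which is immediate from Definition~\ref{defn:SIO:jour} together with the fact that the $L^{p_2}$-operator norm of a CZO is controlled by its $\CZO$-constant.
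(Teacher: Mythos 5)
Your proposal is correct and follows essentially the same route as the paper's proof: reduce via Lemma~\ref{lem:pvrep} to the off-support kernel form, apply the H\"older estimate on $b$ together with the size bound $\Norm{K_2(x_1,y_1)}{\CZO(d_2,\delta)}\lesssim|x_1-y_1|^{-d_1}$ (hence the $L^{p_2}\to L^{p_2}$ bound), and conclude by the fractional integral $\mathsf{I}_{\alpha_1}$ (Hardy--Littlewood--Sobolev) in the first variable plus duality. No gaps.
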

\begin{proof} It is enough to prove the claim for functions in a dense subset of the space $L^{p_1}_{x_1}L^{p_2}_{x_2}$ and clearly $\Sigma$ is such a subset. As $b(\cdot,x_2) \in \dot C^{0,\alpha_1}_{x_1},$ especially $b(\cdot,x_2)\in L^{\infty}_{\loc,x_1}$ and thus by Lemma \ref{lem:pvrep} we can estimate
	\allowdisplaybreaks \begin{align*}
		\abs{	\langle [b,T]f,g\rangle} &\leq \int _{\R^{d_1}}\int_{\R^{d_1}}  \Babs{(b(x_1,v)-b(y_1,v)) \Big\langle K_{2}(x_1,y_1)f(y_1,\cdot)(z),g(x_1,z)\Big\rangle_z} \ud y_1\ud x_1 \\
		&\lesssim \int _{\R^{d_1}}\int_{\R^{d_1}}  \frac{\abs{b(x_1,v)-b(y_1,v)}}{\abs{x_1-y_1}^{d_1}} \bNorm{f(y_1,z)}{L^{p_2}_{z}}\bNorm{g(x_1,z)}{L^{p_2'}_{z}} \ud y_1\ud x_1 \\
		&\leq \bNorm{b(\cdot,v)}{\dot C^{0,\alpha_1}_{x_1}}   \int_{\R^{d_1}}\int_{\R^{d_1}} \abs{x_1-y_1}^{\alpha_1-d_1}\bNorm{f(y_1,z)}{L^{p_2}_{z}}\bNorm{g(x_1,z)}{L^{p_2'}_{z}}  \ud y_1\ud x_1 \\
		&= \bNorm{b(\cdot,v)}{\dot C^{0,\alpha_1}_{x_1}}   \int_{\R^{d_1}}  \mathsf{I}_{\alpha_1}\Big( \bNorm{f(\cdot,z)}{L^{p_2}_{z}} \Big)(x_1) \cdot \bNorm{g(x_1,z)}{L^{p_2'}_{z}} \ud x_1 \\
		&\lesssim \bNorm{b(\cdot,v)}{\dot C^{0,\alpha_1}_{x_1}}  \bNorm{f(y_1,z)}{L^{p_1}_{y_1}L^{p_2}_{z}}\bNorm{g(x_1,z)}{L^{q_1'}_{x_1} L^{p_2'}_{z}},
	\end{align*}
where in the last step we used the boundedness of the fractional integral.
\end{proof}

The author has no competing interests to declare.
\bibliography{references}

\end{document}